\documentclass{amsart}
\usepackage[margin=1in]{geometry}

\usepackage{eurosym}
\usepackage{amsmath,amssymb,amsthm}
\usepackage{amsfonts,amsrefs}
\usepackage{url}
\usepackage[pdfencoding=unicode,psdextra]{hyperref}
\usepackage{bookmark}
\usepackage{graphicx,caption}
\usepackage{bbm}

\setcounter{MaxMatrixCols}{10}

\usepackage{bbm}
\usepackage[export]{adjustbox}
\usepackage[section]{algorithm}
\usepackage[noend]{algpseudocode}

\theoremstyle{plain} 
\newtheorem{theorem}[algorithm]{Theorem}
\newtheorem{corollary}[algorithm]{Corollary}
\newtheorem{lemma}[algorithm]{Lemma}
\newtheorem{proposition}[algorithm]{Proposition}

\theoremstyle{definition}
\newtheorem{definition}[algorithm]{Definition}
\newtheorem{remark}[algorithm]{Remark}
\newtheorem{example}[algorithm]{Example}

\usepackage{chngcntr}
\counterwithin{figure}{section}

\usepackage{microtype}
\DisableLigatures{encoding = *, family = *}

\usepackage[T3,OT1]{fontenc}
\DeclareSymbolFont{tipa}{T3}{cmr}{m}{n}
\DeclareMathAccent{\invbreve}{\mathalpha}{tipa}{16}

\begin{document}

\title[Coefficients of Catalan States of Lattice Crossing I]{Coefficients of Catalan States of Lattice Crossing I: \\ $\Theta_{A}$-state Expansion}

\author{Mieczyslaw K. Dabkowski}
\address{Department of Mathematical Sciences, The University of Texas at Dallas, Richardson, TX 75080}
\email{mdab@utdallas.edu}

\author{Cheyu Wu}
\address{Department of Mathematical Sciences, The University of Texas at Dallas, Richardson, TX 75080}
\email{cheyu.wu@utdallas.edu}

\begin{abstract}
Plucking polynomial for plane rooted trees was introduced by J.H.~Przytycki in 2014. As it was shown later, this polynomial can be used to find coefficients $C(A)$ of Catalan states $C$ of $m \times n$-lattice crossing $L(m,n)$ without returns on one side. In this paper, we show that $C(A)$ for any $C$ can be found by using $\Theta_{A}$-state expansion which represents $C(A)$ as a linear combination of coefficients of Catalan states with no top returns over $\mathbb{Q}(A)$. We also provide an algorithm for finding $\Theta_{A}$-state expansions and examples of its applications. Finally, an example of a Catalan state with non-unimodal coefficient is given.
\end{abstract}

\maketitle


\section{Introduction}
\label{s:intro}
Kauffman Bracket Skein Module (KBSM)\footnote{Skein modules were introduced by J.H.~Przytycki \cite{Prz1991} in 1987 and later discovered by V.~Turaev \cite{Tur1990} in 1988.} for a product of an oriented surface of genus $g$ with $n$ boundary components $F_{g,n}$ and an interval $I = [0,1]$ is an algebra\footnote{Multiplication of links $L_{1}$ and $L_{2}$ in KBSM of $F_{g,n} \times I$ is defined by placing $L_{1}$ above $L_{2}$.} called the Kauffman Bracket Skein Algebra (KBSA) of $F_{g,n}$. In 2000, D.~Bullock and J.H.~Przytycki studied these algebras for $(g,n) \in \{(1,0),(1,1),(1,2),(0,4)\}$ and found their presentations (see \cite{BP2000}). Furthermore, an elegant and important \emph{``product-to-sum formula''} for multiplication of curves in KBSA of $F_{1,0}$ was given by C.~Frohman and R.~Gelca in \cite{FG2000} and later used in \cites{Gel2002,GU2003,McL2007}. Unfortunately, formulas for product in KBSA of $F_{g,n}$ with $(g,n) \in \{(1,1),(1,2),(0,4)\}$ appear to be much more complex and difficult to find. Some partial results have been though obtained. For instance, formulas for product of some special curves in KBSA of $F_{1,1}$ were given in \cites{Cho2012,CG2014} and for product of some classes of curves in KBSA of $F_{0,4}$ were obtained in \cite{Li2016} and \cite{BMPSW2018}.

\begin{figure}[ht] 
\centering
\includegraphics[scale=1]{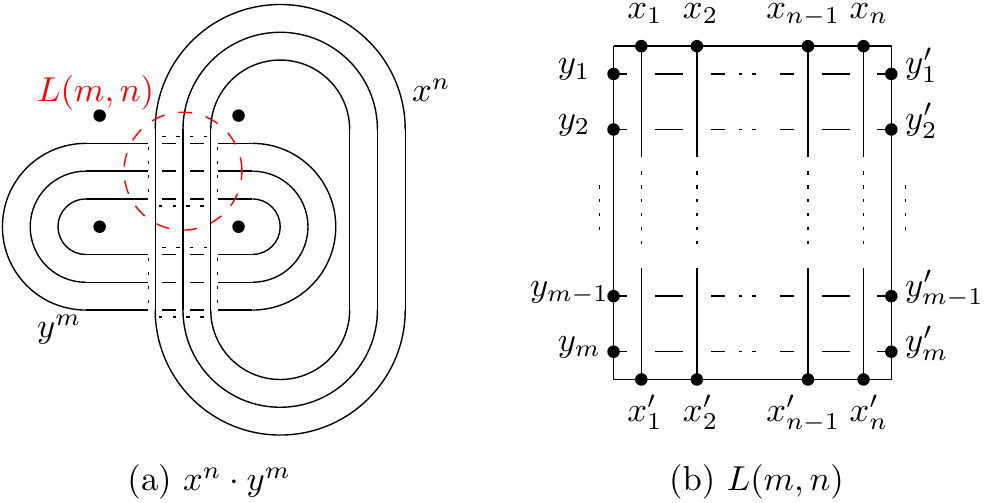}
\caption{Product of curves in $F_{0,4}$ and lattice crossing $L(m,n)$}
\label{fig:LC_intro}
\end{figure}

Since the algebraic structure of KBSA of $F_{g,n}$ is a subject of recent research interests (see \cites{Prz1999-2,PS2000,PS2019,FK2018,Le2018,FKL2019,BW2016}), determining formulas for products of curves on $F_{g,n}$ is an important problem. To understand such products locally, in \cite{DLP2015} authors considered lattice crossing $L(m,n)$ (see Figure~\ref{fig:LC_intro}) and asked if closed-form formulas for coefficients $C(A)$ of Catalan states $C$ of $L(m,n)$ in the Relative Kauffman Bracket Skein Module (RKBSM) of a cylinder with $2(m+n)$ points fixed on its boundary can be found. The problem of finding these coefficients led to some new and interesting developments. For instance, plucking polynomial of plane rooted trees was introduced by J.H.~Przytycki in \cites{Prz2016-2,Prz2016}\footnote{Plucking polynomial was introduced by J.H.~Przytycki during his talk \emph{``$q$-polynomial invariant of rooted trees''}, Knots in Washington XXXVIII: \emph{30 years of the Jones polynomial}, GWU, May 9-11, 2014.} and studied in \cites{CMPWY2017,CMPWY2018,CMPWY2019}. Using a version of this polynomial (for weighted plane rooted trees), a formula for coefficients of Catalan states of $L(m,n)$ with no returns on one side was given in \cite{DP2019}. Finally, formulas for $C(A)$ for some families of Catalan states with returns on all sides were found in \cite{DM2021} and the problem of determining coefficients of Catalan states of B-type lattice crossing was studied in \cite{DR2021}. In this paper, we show that $C(A)$ for any Catalan state $C$ of $L(m,n)$ can be expressed as a linear combination over $\mathbb{Q}(A)$ of coefficients of Catalan states with no top returns. This fact combined with results of \cite{DP2019} gives an efficient method for finding $C(A)$.

The paper is organized as follows. In Section~\ref{s:pre}, we introduce some new terminology describing parts of a Catalan state, such as a \emph{roof state}, a \emph{middle state}, a \emph{floor state}, etc. (see Definition~\ref{def:states_intro}), and give a short summary of results from \cite{DLP2015} and \cite{DP2019}. In Section~\ref{s:1st_row_exp}, we derive formula \eqref{eqn:1st_row_exp} (called the \emph{first-row expansion}) in Proposition~\ref{prop:1st_row_exp} which plays an important role in the next section. We also show that Laurent polynomials $C(A)$ have non-negative integer coefficients (Corollary~\ref{cor:non_negative_coef}) and $C(A) \neq 0$ if and only if the Catalan state $C$ is realizable (Theorem~\ref{thm:realizable_coef_non_zero}). In Section~\ref{s:Theta_state_expansion}, we define \emph{$\Theta_{A}$-state expansion} (Definition~\ref{def:Theta_state_expansion}) and show that it exists for any roof state $R$ (Theorem~\ref{thm:main}). The proof of Theorem~\ref{thm:main} yields an algorithm (Algorithm~\ref{alg:Theta_state_expansion}) for finding $\Theta_{A}$-state expansion for $R$ which allows us to compute $C(A)$ for any Catalan state $C$ of $L(m,n)$ with $R$ as its roof state. We conclude this paper by presenting applications of Algorithm~\ref{alg:Theta_state_expansion} (see Example~\ref{ex:Theta_state_expansion_2} and Corollary~\ref{cor:formula_T_nju}) and providing an example of a Catalan state $C$ in $L(9,5)$ with non-unimodal coefficient $C(A)$ (see Example~\ref{ex:Theta_state_expansion_3}).

\section{Preliminaries}
\label{s:pre}

Let $\mathrm{R}^{2}_{m,n,2k-n}$ be a rectangle with $2(m+k)$ points fixed on its boundary as shown in Figure~\ref{fig:states_intro}(a). A \emph{crossingless connection} $C$ in $\mathrm{R}^{2}_{m,n,2k-n}$ (or simply \emph{crossingless connection}) consists of $(m+k)$ arcs embedded in the rectangle that join points that are fixed on its boundary. For a crossingless connection $C$ in $\mathrm{R}^{2}_{m,n,2k-n}$, we will refer to points fixed on the top side (respectively, bottom, left, and right side) of $\mathrm{R}^{2}_{m,n,2k-n}$ as its top-boundary points (respectively, bottom-, left-, and right-boundary points) and we denoted by $\mathrm{ht}(C), n_{t}(C)$, $n_{b}(C)$ the number of left-, top-, bottom-boundary points of $C$. We label these points using the labels of corresponding points ($x_{i}, x'_{i}, y_{j}, y'_{j}$) fixed on the boundary of $\mathrm{R}^{2}_{m,n,2k-n}$.

An arc of $C$ that joins a pair of top-boundary points is called a \emph{top return}. Analogously, we can define \emph{bottom, left}, and \emph{right returns}.

\begin{definition}
\label{def:states_intro}
A \emph{roof state} (respectively \emph{floor state}) is a crossingless connection with no bottom (respectively top) returns (see Figure~\ref{fig:states_intro}(b) and (c)). A roof state with no top returns is called a \emph{middle state} (see Figure~\ref{fig:states_intro}(d)). A roof state $R$ with $\mathrm{ht}(R) = 0$ is called a \emph{top state}, likewise, a floor state $F$ with $\mathrm{ht}(F) = 0$ is called a \emph{bottom state}.
\end{definition}

\begin{figure}[ht] 
\centering
\includegraphics[scale=1]{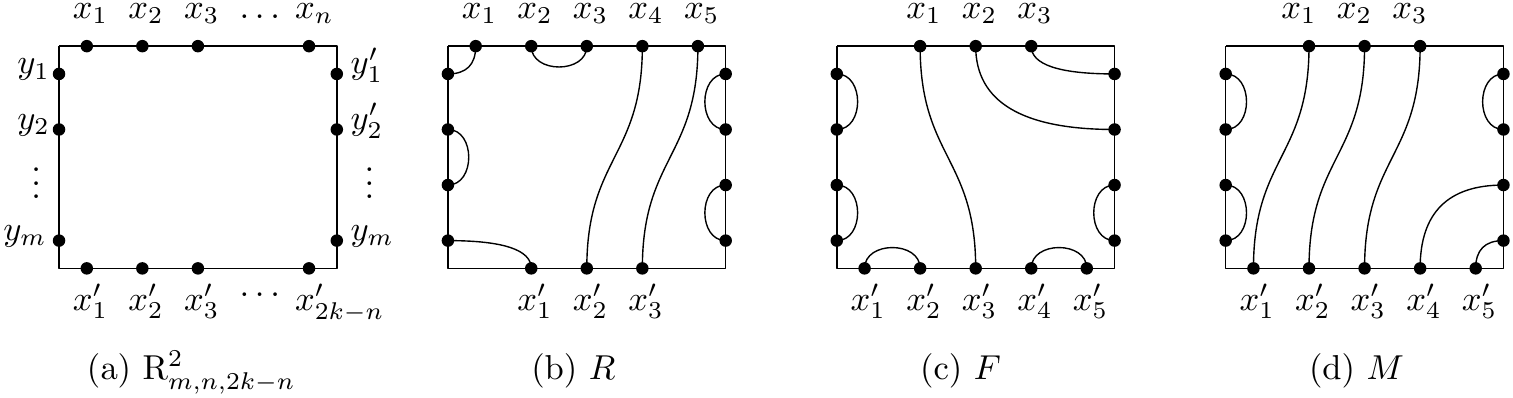}
\caption{Rectangle $\mathrm{R}^{2}_{m,n,2k-n}$, roof state $R$, floor state $F$, and middle state $M$}
\label{fig:states_intro}
\end{figure}

For a pair of tangles $T_{1}$ and $T_{2}$, their \emph{vertical product} $T_{1} *_{v} T_{2}$ is the tangle shown in Figure~\ref{fig:vprod_hprod} whenever the numbers of bottom-boundary points of $T_{1}$ and top-boundary points of $T_{2}$ are equal. Otherwise $T_{1} *_{v} T_{2}$ is not a tangle and in such a case we set $T_{1} *_{v} T_{2} = K_{0}$. Thus, naturally we define
\begin{equation*}
K_{0} *_{v} T_{2} = T_{1} *_{v} K_{0} = K_{0} *_{v} K_{0} = K_{0}.
\end{equation*}
Horizontal product $T_{1} *_{h} T_{2}$ of tangles $T_{1}$ and $T_{2}$ can be defined in an analogous way to the vertical product (see Figure~\ref{fig:vprod_hprod}). Moreover, as it could easily be seen,
\begin{equation*}
(T_{11} *_{h} T_{12}) *_{v} (T_{21} *_{h} T_{22}) = (T_{11} *_{v} T_{21}) *_{h} (T_{12} *_{v} T_{22})
\end{equation*}
for any tangles $T_{11},T_{12},T_{21},T_{22}$ provided that all vertical and horizontal products above are not $K_{0}$. Furthermore, for crossingless connections $C_{1}, C_{2}$ and a floor state $F$, if $C_{1} *_{v} F = C_{2} *_{v} F \neq K_{0}$, then $C_{1} = C_{2}$. Analogously, if $R *_{v} C_{1} = R *_{v} C_{2} \neq K_{0}$ for a roof state $R$, then also $C_{1} = C_{2}$. 

\begin{figure}[ht] 
\centering
\includegraphics[scale=1]{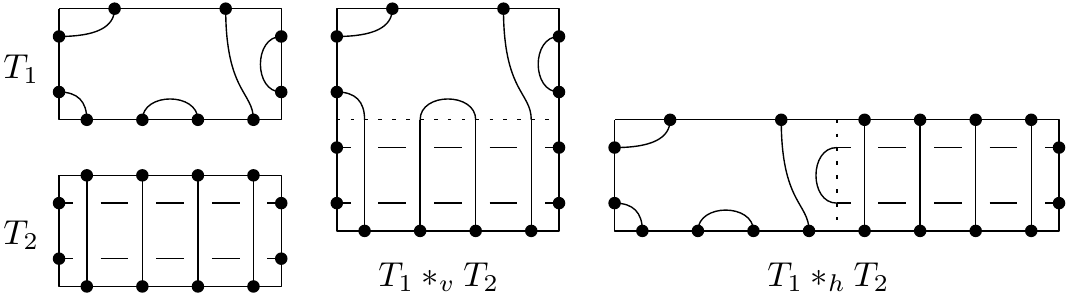}
\caption{Vertical and horizontal products of $T_{1}$ and $T_{2}$}
\label{fig:vprod_hprod}
\end{figure}

A crossingless connection $C$ is called a \emph{Catalan state} if $n_{t}(C) = n_{b}(C)$, and we denote by $\mathrm{Cat}(m,n)$ the set of all such states in $\mathrm{R}^{2}_{m,n,n}$. 

Recall after \cite{DLP2015}, an $m \times n$-\emph{lattice crossing} $L(m,n)$ is a $2(m+n)$-tangle obtained by placing $n$ parallel vertical line segments above $m$ parallel horizontal line segments (see Figure~\ref{fig:LC_intro}(b)). A Kauffman state $s$ of $L(m,n)$ is an assignment of positive and negative markers to all of its crossings.\footnote{A Kauffman state $s$ of $L(m,n)$ can also be viewed as an $m \times n$ matrix with entries $\pm{1}$.} Let $\mathcal{K}(m,n)$ be the set of all Kauffman states of $L(m,n)$. For $s \in \mathcal{K}(m,n)$, we denote by $D_{s}$ the diagram obtained from $L(m,n)$ after smoothing all of its crossings according to the rule shown in Figure~\ref{fig:markers}. Let $C_{s}$ be the Catalan state obtained from $D_{s}$ after removing all of its closed components. We say that a Catalan state $C$ is \emph{realizable} if $C = C_{s}$ for some $s \in \mathcal{K}(m,n)$.

\begin{figure}[ht] 
\centering
\includegraphics[scale=1]{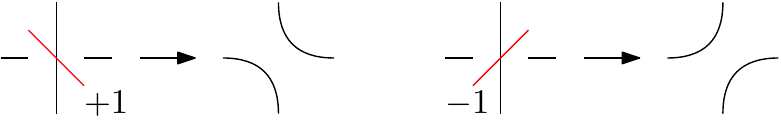}
\caption{Positive and negative markers}
\label{fig:markers}
\end{figure}

As it was shown in \cite{Prz1999}, $\mathrm{Cat}(m,n)$ is a basis for RKBSM of a cylinder with $2(m+n)$ points fixed on its boundary. Therefore,
\begin{equation}
\label{eqn:L_mn}
L(m,n) = \sum_{C \in \mathrm{Cat}(m,n)} C(A) \, C.
\end{equation}
In the linear combination above, the Laurent polynomials $C(A)$ are called \emph{coefficients of Catalan states $C$ of lattice crossing $L(m,n)$}. Let $|D_{s}|$ be the number of closed components of $D_{s}$, and let $p(s)$ and $n(s)$ be the number of positive and negative markers of $s$, respectively. Denote by $\mathcal{K}(C)$ the set of all $s \in \mathcal{K}(m,n)$ such that $C_{s} = C$. Then $C(A)$ is given by
\begin{equation}
C(A) = \sum_{s \in \mathcal{K}(C)} A^{p(s)-n(s)} (-A^{2}-A^{-2})^{|D_{s}|}.
\label{eqn:C(A)}
\end{equation}

We extend the definition of the coefficient of a Catalan state of a lattice crossing as follows. Let $C$ be a crossingless connection or $C = K_{0}$, define $[[C]]_{A}$ by
\begin{equation*}
[[C]]_{A} = 
\begin{cases}
C(A), & \text{if}\ C \ \text{is a Catalan state}, \\
0,    & \text{otherwise}.
\end{cases}
\end{equation*}
For a crossingless connection $C$, denote by $\overline{C}$ its reflection about a vertical line and by $C^{*}$ its $\pi$-rotation. We also put $\overline{K_{0}} = K_{0}$ and $K_{0}^{*} = K_{0}$.

\begin{remark} 
\label{rem:prop_coef}
As one can check, for any crossingless connection $C$ or $C = K_{0}$,
\begin{enumerate}
\item[i)] $[[\overline{C}]]_{A} = [[C]]_{A^{-1}}$ and
\item[ii)] $[[C^{*}]]_{A} = [[C]]_{A}$.
\end{enumerate}
\end{remark}

Let $l^{h}_{i}$ and $l^{v}_{j}$ be horizontal and vertical lines in $\mathrm{R}^{2}_{m,n,n}$ that are shown in Figure~\ref{fig:lhlv_splitting}(a). Note that, horizontal lines $l^{h}_{i}$ are also well-defined in $\mathrm{R}^{2}_{m,n,2k-n}$ when $k \neq n$. Given a crossingless connection $C$ and a horizontal line $l = l^{h}_{i}$, using a regular isotopy (that keeps boundary points fixed) we deform $C$ so that $l$ intersects $C$ at the minimal number of points and let us denote this number by $\#(C \cap l)$. 

\begin{figure}[ht] 
\centering
\includegraphics[scale=1]{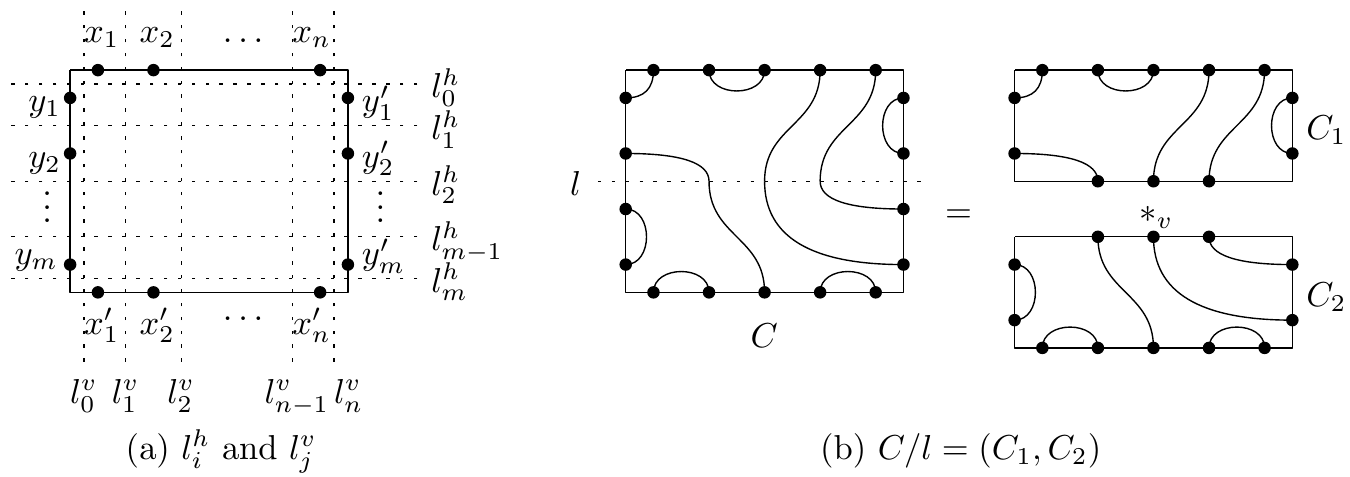}
\caption{Horizontal lines $l^{h}_{i}$ and vertical lines $l^{v}_{j}$; splitting $C/l$ of $C$ along $l$}
\label{fig:lhlv_splitting}
\end{figure}

\begin{theorem}[\protect\cite{DLP2015}, Theorem~2.5] 
\label{thm:vh_line_condi}
A Catalan state $C \in \mathrm{Cat}(m,n)$ is realizable if and only if $\#(C \cap l^{h}_{i}) \leq n$ for $i = 1,2,\ldots,m-1$ and $\#(C \cap l^{v}_{j}) \leq m$ for $j = 1,2,\ldots,n-1$.
\end{theorem}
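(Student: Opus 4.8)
The plan is to prove the two implications separately. Necessity (the ``only if'' direction) is a short monotonicity argument, while sufficiency (the ``if'' direction) is an inductive construction that carries essentially all of the difficulty.

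\textbf{Necessity.} Suppose $C = C_s$ for some $s \in \mathcal{K}(m,n)$. The line $l^h_i$ lies strictly between the $i$-th and $(i+1)$-st horizontal strands of $L(m,n)$, so it meets no crossing and is cut by the tangle in exactly the $n$ points where the vertical strands pass through it. Each marker smoothing is supported in a small disk about its crossing and is therefore disjoint from $l^h_i$, so the smoothed diagram $D_s$ still meets $l^h_i$ in at most $n$ points. Deleting the closed components of $D_s$ and then passing to the minimal intersection number can only decrease this count, whence $\#(C \cap l^h_i) \le \#(D_s \cap l^h_i) \le n$; the same argument with rows and columns interchanged gives $\#(C \cap l^v_j) \le m$.

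\textbf{Sufficiency.} I would induct on $m$, the cases $m = 0$ or $n = 0$ being trivial, and peel off the bottom row by means of the decomposition $L(m,n) = L(m-1,n) *_v L(1,n)$. The goal is to choose a resolution $C_{\mathrm{bot}} \in \mathrm{Cat}(1,n)$ of the bottom row, realizable in $L(1,n)$, such that $C = C' *_v C_{\mathrm{bot}}$ for some $C' \in \mathrm{Cat}(m-1,n)$ satisfying the hypotheses of the theorem on the smaller lattice, namely $\#(C' \cap l^h_i) \le n$ for $i \le m-2$ and $\#(C' \cap l^v_j) \le m-1$. Granting this, the inductive hypothesis yields a state $s'$ with $C_{s'} = C'$; combining it with the bottom-row markers of $C_{\mathrm{bot}}$ produces $s$ with $C_s = C$ (any closed components created in the gluing are discarded), completing the step. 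By the symmetries in Remark~\ref{rem:prop_coef}, peeling any of the four sides is equivalent, so this single reduction suffices.

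\textbf{Main obstacle.} The delicate point is that the realizing diagram $D_s$ need \emph{not} be taut, so one cannot simply cut the minimal representative of $C$ along $l^h_{m-1}$: small examples show that an arc lying entirely in the top rows of the taut picture of $C$ may have to be sent down into the bottom row and returned through a different column, so that on the cut line it is split into two arcs that no longer cross a given $l^v_j$. This turnback mechanism is precisely what lets one reduce an over-full vertical line from the allowed $m$ down to the required $m-1$ for $C'$, and the horizontal bound $\#(C \cap l^h_{m-1}) \le n$ is exactly the capacity that guarantees the bottom row, with its $n$ crossings, can absorb all such reroutings at once. The heart of the proof is therefore a feasibility argument: treating the line bounds as capacity constraints, one must exhibit a single resolution $C_{\mathrm{bot}}$ of the bottom row that simultaneously reproduces the bottom returns of $C$, carries upward exactly the arcs that must leave the bottom row, and reduces every vertical intersection number of the residual $C'$ to at most $m-1$. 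Verifying that such a compatible choice always exists --- rather than treating each vertical line in isolation, where conflicting demands could arise --- is the step I expect to be the crux.
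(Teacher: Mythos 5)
Your \emph{necessity} argument is correct, and it is the standard one: the crossings of $L(m,n)$ sit on the horizontal strands, so every smoothing takes place away from $l^{h}_{i}$; hence $D_{s}$ meets $l^{h}_{i}$ in exactly $n$ points, and discarding closed components and passing to minimal position can only lower the count. For calibration, note that the paper does not actually prove this theorem; it is imported from \cite{DLP2015}. The closest in-paper relative is the proof of Theorem~\ref{thm:realizable_coef_non_zero}, which runs an induction of exactly the kind you sketch, and it is the natural benchmark for your plan.

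The \emph{sufficiency} half, however, is a plan rather than a proof, and the step you leave open is the entire theorem. You reduce everything to the existence of a bottom-row resolution $C_{\mathrm{bot}}$ with $C = C' *_{v} C_{\mathrm{bot}}$, $\#(C' \cap l^{h}_{i}) \leq n$, and $\#(C' \cap l^{v}_{j}) \leq m-1$, and then you explicitly defer this feasibility statement as ``the crux''; since the remaining steps (gluing Kauffman states, discarding closed components, base cases) are routine, leaving the crux unproved is a genuine gap. Moreover, the mechanism you propose for it --- rerouting arcs through the bottom row to relieve vertical lines at full capacity $m$ --- is not how this difficulty is resolved; the clean device is a parity observation. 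Counting the boundary points of $C$ on one side of $l^{v}_{j}$ gives $\#(C \cap l^{v}_{j}) \equiv m \pmod 2$, so the hypothesis $\#(C \cap l^{v}_{j}) \leq m$ forces each vertical count to be either at most $m-2$ or exactly $m$. If some count equals $m$, then $C$ splits along that vertical line as a horizontal product of two Catalan states (the geometric splitting underlying Remark~\ref{rem:hprod}), and one realizes the two factors separately and places the corresponding Kauffman states side by side; if every count is at most $m-2$, then peeling one row changes each vertical count by at most one, so the naive peel --- delete a top arc $e_{j'}$, which exists because otherwise $\#(C \cap l^{h}_{1}) = n+2 > n$ --- already produces a $C'$ satisfying the hypotheses for $L(m-1,n)$, and no simultaneous rerouting is ever needed. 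This is precisely how the proof of Theorem~\ref{thm:realizable_coef_non_zero} handles the same obstacle (reduce via Remark~\ref{rem:hprod} to $\#(C \cap l^{v}_{j}) \leq m-2$, then remove a top arc), and that argument combined with \eqref{eqn:C(A)} in fact yields an independent route to the implication you are missing. Without either this device or an actual proof of your feasibility claim, the hard direction remains unproven.
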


Define \emph{splitting $C/l$} of a crossingless connection $C$ along a horizontal line $l$ to be a pair $(C_{1},C_{2})$ of crossingless connections, such that $C = C_{1} *_{v} C_{2}$ and $n_{b}(C_{1}) = \#(C \cap l)$ (see Figure~\ref{fig:lhlv_splitting}(b)).

\begin{theorem}[\protect\cite{DP2019}, Theorem~5.1] 
\label{thm:split_cat}
Given a crossingless connection $C$ and a horizontal line $l$ with $\#(C \cap l) \geq \min\{n_{t}(C),n_{b}(C)\}$, let $(C_{1},C_{2})$ be the splitting $C/l$ of $C$ along $l$. Then
\begin{equation*}
[[C]]_{A} = [[C_{1}]]_{A} \, [[C_{2}]]_{A}.
\end{equation*}
\end{theorem}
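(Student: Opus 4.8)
The plan is to exploit the fact that the horizontal line $l = l^h_i$ decomposes the lattice crossing itself as a vertical product, $L(m,n) = L(i,n) *_v L(m-i,n)$ (cutting between the top $i$ and bottom $m-i$ rows of horizontal strands), and to trace this decomposition through the state sum \eqref{eqn:C(A)}. Before doing so I would dispose of the degenerate cases. Since $n_t(C_1) = n_t(C)$, $n_b(C_2) = n_b(C)$, and $n_b(C_1) = n_t(C_2) = \#(C \cap l)$, the pieces $C_1$ and $C_2$ are both Catalan states only when $n_t(C) = \#(C \cap l) = n_b(C)$; otherwise at least one of $[[C_1]]_A$, $[[C_2]]_A$ vanishes by definition. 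Thus if $C$ is not a Catalan state both sides are $0$, and if $C$ is a Catalan state (so $n_t(C) = n_b(C) = n$) with $\#(C \cap l) > n$, then $C_1$ is not a Catalan state, while $C$ fails the criterion of Theorem~\ref{thm:vh_line_condi} and so is not realizable, giving $\mathcal{K}(C) = \varnothing$ and $[[C]]_A = 0$ directly from \eqref{eqn:C(A)}. Hence both sides vanish and the only substantial case is $C \in \mathrm{Cat}(m,n)$ with $\#(C \cap l) = n$.

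In that case I would first record that a Kauffman state $s \in \mathcal{K}(m,n)$ is precisely a pair $(s_1,s_2) \in \mathcal{K}(i,n) \times \mathcal{K}(m-i,n)$ obtained by reading off the markers in the top $i$ and bottom $m-i$ rows, and that this bijection satisfies $p(s) = p(s_1)+p(s_2)$ and $n(s) = n(s_1)+n(s_2)$. Smoothing is local, so $D_s = D_{s_1} *_v D_{s_2}$, and this product is always defined, since each $D_{s_1}$ meets $l$ in exactly the $n$ points where the vertical strands cross it. Writing $g(s_1,s_2)$ for the number of closed components created when the arc parts of $C_{s_1}$ and $C_{s_2}$ are glued along $l$, one has $|D_s| = |D_{s_1}| + |D_{s_2}| + g(s_1,s_2)$, and the summand in \eqref{eqn:C(A)} factors as $A^{p(s_1)-n(s_1)}(-A^2-A^{-2})^{|D_{s_1}|}\cdot A^{p(s_2)-n(s_2)}(-A^2-A^{-2})^{|D_{s_2}|}\cdot(-A^2-A^{-2})^{g(s_1,s_2)}$.

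The crux is then a gluing lemma, which I would isolate and prove separately: since $\#(C \cap l) = n$ equals the number $n$ of cut points, the splitting $(C_1,C_2)$ has $C_1$ with no bottom returns and $C_2$ with no top returns, and for any $s_1,s_2$ the Catalan state underlying $C_{s_1} *_v C_{s_2}$ equals $C$ \emph{if and only if} $C_{s_1} = C_1$ and $C_{s_2} = C_2$, in which case $g(s_1,s_2) = 0$. Granting this, the bijection carries $\mathcal{K}(C)$ onto $\mathcal{K}(C_1) \times \mathcal{K}(C_2)$, the factor $(-A^2-A^{-2})^{g}$ is trivial throughout, and the state sum \eqref{eqn:C(A)} factors as the product of the two state sums, yielding $[[C]]_A = C(A) = C_1(A)\,C_2(A) = [[C_1]]_A\,[[C_2]]_A$.

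The main obstacle is this gluing lemma, and specifically its geometric heart: controlling the closed loops produced along $l$. The point is that the realization of $C$ inside the diagram $C_{s_1} *_v C_{s_2}$ meets $l$ in at most $n$ points (there are only $n$ cut points) and, being isotopic to $C$, in at least $\#(C \cap l) = n$ points; hence in exactly $n$, so every cut point lies on the realization of $C$ and none on a removed loop, forcing $g = 0$. Because $n$ is the \emph{minimal} intersection number, this realization carries no removable ``hump'' across $l$, which rules out bottom returns of $C_{s_1}$ and top returns of $C_{s_2}$ meeting along $l$ and thereby pins down $C_{s_1} = C_1$ and $C_{s_2} = C_2$ via uniqueness of the splitting. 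Making the hump-removal argument rigorous — ensuring that a two-point intersection bounding a disk above (or below) $l$ can always be isotoped away without disturbing the remaining boundary data — is the one step I expect to require genuine care.
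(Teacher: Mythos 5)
This statement is not proved in the paper at all: it is quoted verbatim from \cite{DP2019} (Theorem~5.1 there), so there is no internal argument to compare yours against, and I can only assess the proposal on its own merits. On those terms it is correct. The case analysis is right: if $C$ is not a Catalan state both sides vanish for the reason you give, and if $C \in \mathrm{Cat}(m,n)$ with $\#(C \cap l) > n$ then $C_{1}$ is not a Catalan state, while $C$ fails the horizontal-line condition of Theorem~\ref{thm:vh_line_condi}, so $\mathcal{K}(C) = \emptyset$ and $C(A) = 0$ directly from \eqref{eqn:C(A)} (note this case can only occur for $l = l^{h}_{i}$ with $1 \leq i \leq m-1$, exactly where Theorem~\ref{thm:vh_line_condi} applies; for a Catalan state and a line above the first or below the last row one always has $\#(C \cap l) \leq n$). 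In the main case, the factorization $\mathcal{K}(m,n) \cong \mathcal{K}(i,n) \times \mathcal{K}(m-i,n)$ with $p(s) = p(s_{1})+p(s_{2})$, $n(s) = n(s_{1})+n(s_{2})$, $|D_{s}| = |D_{s_{1}}|+|D_{s_{2}}|+g(s_{1},s_{2})$, together with your counting argument---the embedded copy of $C$ inside $C_{s_{1}} *_{v} C_{s_{2}}$ must use all $n$ intersection points with $l$ by minimality, so no glued loop survives and $g = 0$---is exactly the right mechanism for factoring the state sum \eqref{eqn:C(A)}.

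One remark that makes the step you flag as delicate lighter than you fear: once you know that $C_{s_{1}} *_{v} C_{s_{2}}$ has no closed components, is isotopic to $C$, and satisfies $n_{b}(C_{s_{1}}) = n = \#(C \cap l)$, the pair $(C_{s_{1}},C_{s_{2}})$ satisfies verbatim the defining property of the splitting $C/l$; hence $(C_{s_{1}},C_{s_{2}}) = (C_{1},C_{2})$ follows from the uniqueness (well-definedness) of the splitting, which the paper's definition of $C/l$ already presupposes and which the theorem's statement needs anyway in order to be meaningful. If you insist on proving that uniqueness rather than citing it, the clean route is parity plus disjointness rather than an isotopy/hump-removal analysis: every arc of $C$ with endpoints on the same side of $l$ meets $l$ an even number of times and every arc with endpoints on opposite sides an odd number of times, so minimality of the total forces these counts to be $0$ and $1$ respectively, and disjointness of the arcs determines the order of the crossing points along $l$; this pins down the cut pieces combinatorially, with no bigon ever present to remove.
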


\begin{remark}
\label{rem:hprod}
An analogous result to Theorem~\ref{thm:split_cat} holds for a Catalan state $C$ and a vertical line $l = l^{v}_{j}$ with $\#(C \cap l) = \mathrm{ht}(C)$.
\end{remark}

Let $C \in \mathrm{Cat}(m,n)$ be a Catalan state with no bottom returns. Following the construction given in \cite{DP2019}, one defines a dual plane rooted tree $T(C)$ with root $v_{0}$ and a weight function $\alpha$ (called \emph{delay})\footnote{The delay is defined from the set of leaves of $T(C)$ (not including the root) to $\mathbb{N}$ by $\alpha(v) = k$ if $v$ corresponds to a left or right return with its lower end $y_{k}$ or $y'_{k}$ and $\alpha(v) = 1$ otherwise.} as shown in Figure~\ref{fig:rooted_tree}.

\begin{figure}[ht] 
\centering
\includegraphics[scale=1]{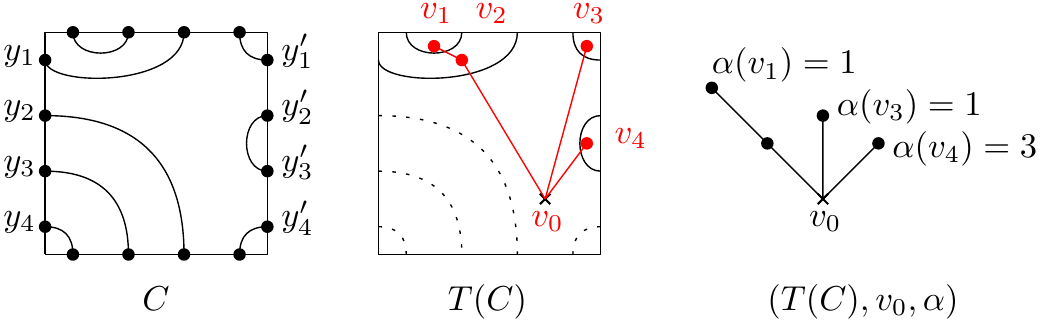}
\caption{Plane rooted tree $(T(C),v_{0})$ with delay $\alpha$}
\label{fig:rooted_tree}
\end{figure}

\begin{definition}[\cite{DP2019}, Definition~3.1] 
\label{def:q_poly}
Let $(T,v_{0})$ be a plane rooted tree $T$ with root $v_{0}$ and let $\alpha$ be a weight function from the set of leaves of $T$ (not including $v_{0}$) to $\mathbb{N}$. Denote by $L_{1}(T)$ the set of all leaves $v$ of $T$ with $\alpha(v) = 1$. The \emph{plucking polynomial} $Q_{q}$ of $(T,v_{0},\alpha)$ is a polynomial in variable $q$ defined as follows: For $T$ with no edges $Q_{q}(T,v_{0},\alpha) = 1$, and otherwise
\begin{equation*}
Q_{q}(T,v_{0},\alpha) = \sum_{v \in L_{1}(T)} q^{r(T,v_{0},v)} Q_{q}(T-v,v_{0},\alpha_{v}),
\end{equation*}
where $r(T,v_{0},v)$ is the number of vertices of $T$ to the right of the unique path connecting $v_{0}$ and $v$, and $\alpha_{v}$ is the weight function defined by $\alpha_{v}(u) = \max\{1,\alpha(u)-1\}$ if $u$ is a leaf of $T$ and $\alpha_{v}(u) = 1$ if $u$ is a new leaf of $T-v$.
\end{definition}

As shown in \cite{DP2019}, coefficients of realizable Catalan states of $L(m,n)$ with no bottom returns can be found using Kauffman states with rows $r_{i}$, $1 \leq i \leq m$, in the form
\begin{equation*}
r_{i} = ( \underset{b_{i}}{\underbrace{1,1,\ldots,1}},\underset{n-b_{i}}{\underbrace{-1,-1,\ldots,-1}}),
\end{equation*}
where $0 \leq b_{i} \leq n$. Hence, these states are uniquely determined by sequences $\mathbf{b} = (b_{1},b_{2},\ldots,b_{m})$. Denote by $\mathfrak{b}(C)$ the set of all such sequences that realize $C$ and let $\beta(C) = \max\{\Vert\mathbf{b}\Vert: \mathbf{b} \in \mathfrak{b}(C)\}$, where $\Vert\mathbf{b}\Vert = b_{1} + b_{2} + \cdots + b_{m}$.

\begin{theorem}[\cite{DP2019}, Theorem~3.4] 
\label{thm:coef_no_bot_rtn}
Let $C \in \mathrm{Cat}(m,n)$ be a realizable Catalan state with no bottom returns, and let $T = T(C)$ be its corresponding plane rooted tree with root $v_{0}$ and delay $\alpha$. Then 
\begin{equation*}
C(A) = A^{2\beta(C)-mn} Q_{A^{-4}}^{*}(T,v_{0},\alpha),
\end{equation*}
where $Q_{q}^{*}(T,v_{0},\alpha) = q^{-\min\deg_{q}(Q_{q}(T,v_{0},\alpha))} \, Q_{q}(T,v_{0},\alpha)$.
\end{theorem}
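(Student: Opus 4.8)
The plan is to evaluate the defining sum \eqref{eqn:C(A)} for $C(A)$ by first restricting the Kauffman states that contribute, and then matching the resulting generating function to the plucking polynomial recursively. As recalled above, a realizable Catalan state $C$ with no bottom returns is produced by the staircase Kauffman states $s_{\mathbf{b}}$ indexed by $\mathbf{b} \in \mathfrak{b}(C)$, each row of $s_{\mathbf{b}}$ carrying $b_{i}$ positive markers followed by $n - b_{i}$ negative ones. The first step is to verify that smoothing such a state yields $C$ with no closed components, so that $|D_{s_{\mathbf{b}}}| = 0$; this is done by tracing the strands of the staircase smoothing and invoking the line-intersection bound of Theorem~\ref{thm:vh_line_condi}. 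Since $p(s_{\mathbf{b}}) = \Vert\mathbf{b}\Vert$ and $n(s_{\mathbf{b}}) = mn - \Vert\mathbf{b}\Vert$, equation \eqref{eqn:C(A)} collapses to
\begin{equation*}
C(A) = \sum_{\mathbf{b} \in \mathfrak{b}(C)} A^{\,2\Vert\mathbf{b}\Vert - mn}.
\end{equation*}

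Next I would normalize by the top power. Factoring out $A^{2\beta(C) - mn}$, where $\beta(C) = \max_{\mathbf{b}} \Vert\mathbf{b}\Vert$, and substituting $q = A^{-4}$ gives
\begin{equation*}
C(A) = A^{\,2\beta(C) - mn} \sum_{\mathbf{b} \in \mathfrak{b}(C)} A^{-2(\beta(C) - \Vert\mathbf{b}\Vert)},
\end{equation*}
so the theorem reduces to the purely combinatorial identity
\begin{equation*}
\sum_{\mathbf{b} \in \mathfrak{b}(C)} A^{-2(\beta(C) - \Vert\mathbf{b}\Vert)} = Q^{*}_{A^{-4}}(T, v_{0}, \alpha).
\end{equation*}
In particular this forces $\beta(C) - \Vert\mathbf{b}\Vert$ to be even for every $\mathbf{b}$, and identifies $\tfrac{1}{2}\bigl(\beta(C) - \Vert\mathbf{b}\Vert\bigr)$ with the exponent graded by $q$ in the normalized plucking polynomial $Q^{*}$ (whose minimal $q$-degree is $0$, matching the leading term $\Vert\mathbf{b}\Vert = \beta(C)$ on the left).

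The core of the argument is then a weight-preserving bijection between $\mathfrak{b}(C)$ and the plucking orders of the dual tree $(T(C), v_{0}, \alpha)$, proved by induction mirroring the recursion in Definition~\ref{def:q_poly}. Plucking a leaf $v \in L_{1}(T)$ corresponds to retracting one return or strand of $C$ associated to $v$; the weight $q^{r(T,v_{0},v)}$ records the vertices lying to the right of the root-to-$v$ path, which are exactly the lattice strands the retracted piece must cross and hence account for the corresponding drop in $\Vert\mathbf{b}\Vert$; and passing to $\alpha_{v}$ encodes that a nested left or right return becomes available only after the returns delaying it have been plucked. The induction step is supplied by the splitting multiplicativity of Theorem~\ref{thm:split_cat}, which lets me peel off the plucked piece and apply the inductive hypothesis to $T - v$. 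I expect the main obstacle to be precisely this dictionary: verifying rigorously that the right-count statistic $r(T,v_{0},v)$ equals the contribution to $\tfrac{1}{2}(\beta(C) - \Vert\mathbf{b}\Vert)$ at each step, and that the delay $\alpha$ faithfully records the admissible plucking orders. Once that correspondence is established, summing over all plucking orders and comparing minimal $q$-degrees yields the stated formula with $Q^{*}$.
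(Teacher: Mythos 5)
This statement is imported by the paper from \cite{DP2019} (their Theorem~3.4); the present paper contains no proof of it, so your proposal can only be checked against the mathematics itself, and there it has a genuine gap at its very first step. You claim that \eqref{eqn:C(A)} ``collapses'' to $C(A)=\sum_{\mathbf{b}\in\mathfrak{b}(C)}A^{2\Vert\mathbf{b}\Vert-mn}$, and the only thing you propose to verify is that each staircase state realizes $C$ with no closed components. That is the easy direction (and it does not need Theorem~\ref{thm:vh_line_condi}: staircase rows create no downward-opening caps, hence the smoothed diagram has no local maxima in its interior and so no closed loops). What the collapse actually requires is the converse --- that no other Kauffman state lies in $\mathcal{K}(C)$ --- and that converse is false. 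For example, in $L(2,2)$ let $C$ be the Catalan state with no returns at all, whose four arcs join $x_{1}$ to $y_{1}$, $x_{2}$ to $y_{1}'$, $x_{1}'$ to $y_{2}$, and $x_{2}'$ to $y_{2}'$ (this $C$ is invariant under the reflection $\overline{\phantom{C}}$, so the count below is independent of the marker convention). One checks that $\mathcal{K}(C)$ has five elements: the two staircase states $\mathbf{b}=(2,1)$ and $\mathbf{b}=(0,1)$, contributing $A^{2}$ and $A^{-2}$, plus three non-staircase states whose first row is a reversed staircase row, contributing $A^{2}$, $-A^{2}-A^{-2}$ (this one produces a closed component), and $A^{-2}$. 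The theorem's formula $C(A)=A^{2}+A^{-2}$ holds only because these three extra contributions cancel. So the reduction to staircase states is a cancellation phenomenon inside the signed, loop-weighted sum \eqref{eqn:C(A)}, not a restriction of $\mathcal{K}(C)$, and your outline contains no mechanism producing this cancellation.

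The standard way to obtain the collapsed formula rigorously is row-by-row rather than state-by-state: expand the \emph{bottom} row of $L(m,n)$ exactly as the paper expands the top row in Lemma~\ref{lem:1st_row_exp}. Any downward cap created by the bottom row is a bottom return of the resulting Catalan state, so for $C$ with no bottom returns only the $n+1$ staircase patterns of that row survive, each contributing $A^{2b_{m}-n}$ times the coefficient of a smaller Catalan state (again with no bottom returns), and induction on $m$ gives the formula without ever controlling individual Kauffman states of $L(m,n)$. The remainder of your plan --- the weight-preserving dictionary between staircase sequences and plucking orders of $(T(C),v_{0},\alpha)$, with $r(T,v_{0},v)$ accounting for the drop in $\Vert\mathbf{b}\Vert$ --- is indeed the core of the argument in \cite{DP2019}, but as written it is a statement of what must be proved rather than a proof; the identifiable error in your proposal is the unjustified (and, as stated, false) restriction step above.
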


\section{First-row Expansion}
\label{s:1st_row_exp}

Let $\mathcal{B}_{n}$ be the set of all bottom states $F$ with $n_{b}(F) = n$.\footnote{One can show that the number of such states is $|\mathcal{B}_{n}| = \binom{n}{\left\lfloor \frac{n}{2}\right\rfloor}$.} Define a map $\varphi_{n}$ from $\mathcal{B}_{n}$ to the set $\mathrm{Fin}(\mathbb{N})$ of all finite subsets of $\mathbb{N}$ by $\varphi_{n}(F) = \{i_{1},i_{2},\ldots,i_{k}\}$, where $i_{1} < i_{2} < \cdots < i_{k}$ are the indices of the left ends of bottom returns of $F$. For example, $\varphi_{7}(F) = \{3,4\}$ for the bottom state $F$ shown in Figure~\ref{fig:bot_state}.

\begin{figure}[ht] 
\centering
\includegraphics[scale=1]{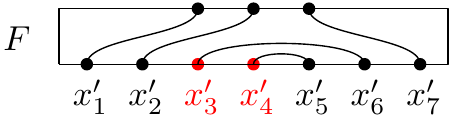}
\caption{Indices of bottom returns of $F$}
\label{fig:bot_state}
\end{figure}

Let $\mathcal{L}_{n} = \varphi_{n}(\mathcal{B}_{n})$, then clearly $\varphi_{n}: \mathcal{B}_{n} \to \mathcal{L}_{n}$ is a bijection. Thus, for each $I \in \mathcal{L}_{n}$ there is a corresponding bottom state $\varphi_{n}^{-1}(I)$. Clearly, $\varphi_{n}^{-1}(\emptyset) = L(0,n)$ and  $\varphi_{n}^{-1}(\{i\})$ for $1 \leq i \leq n-1$ is given in Figure~\ref{fig:phi_inverse}. We show how to find $\varphi_{n}^{-1}(I)$ for an arbitrary $I \in \mathcal{L}_{n}$ in Remark~\ref{rem:oplus}.

\begin{figure}[ht] 
\centering
\includegraphics[scale=1]{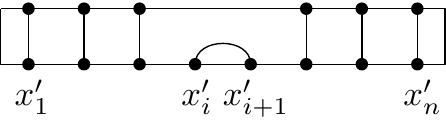}
\caption{$\varphi_{n}^{-1}(\{i\})$ for $1 \leq i \leq n-1$}
\label{fig:phi_inverse}
\end{figure}

Let $C$ be a crossingless connection with $n_{b}(C) = n$ or $C = K_{0}$, and $I \in \mathrm{Fin}(\mathbb{N})$. If $C \neq K_{0}$ and $I \in \mathcal{L}_{n}$, define $C_{I}$ as the unique crossingless connection $C'$ such that 
\begin{equation*}
C = C' *_{v} \varphi_{n}^{-1}(I)
\end{equation*}
provided that $C'$ exists, and set $C_{I} = K_{0}$ otherwise.
If $C = K_{0}$ or $I \notin \mathcal{L}_{n}$, we also set $C_{I} = K_{0}$. Geometrically, $C_{I}$ is simply a crossingless connection obtained from $C$ after removing its all bottom returns indexed by $I$ together with their ends (see Figure~\ref{fig:C_I}). 

\begin{figure}[ht] 
\centering
\includegraphics[scale=1]{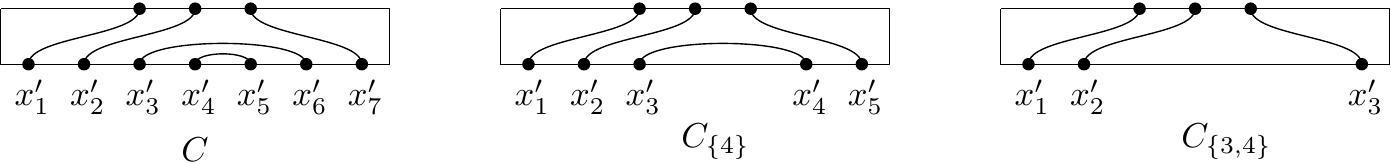}
\caption{Crossingless connections $C$ and $C_{I}$ for $I = \{4\},\{3,4\}$}
\label{fig:C_I}
\end{figure}

Define a partial order on $\mathcal{L}_{n}$ as follows: for $I,J \in \mathcal{L}_{n}$, $J \preceq_{n} I$ if $(\varphi_{n}^{-1}(I))_{J} \neq K_{0}$ (see Figure~\ref{fig:poset_Ln}). Notice that, if $J \preceq_{n} I$ then
\begin{eqnarray*}
\varphi_{n+1}^{-1}(I) 
&=& \varphi_{n}^{-1}(I) *_{h} L(0,1) 
= ((\varphi_{n}^{-1}(I))_{J} *_{v} \varphi_{n}^{-1}(J)) *_{h} (L(0,1) *_{v} L(0,1)) \\
&=& ((\varphi_{n}^{-1}(I))_{J} *_{h} L(0,1)) *_{v} (\varphi_{n}^{-1}(J) *_{h} L(0,1)) \\
&=& ((\varphi_{n}^{-1}(I))_{J} *_{h} L(0,1)) *_{v} \varphi_{n+1}^{-1}(J).
\end{eqnarray*}
It follows that, if $J \preceq_{n} I$ in $\mathcal{L}_{n}$ then $J \preceq_{n+1} I$ in $\mathcal{L}_{n+1}$. Therefore, $(\mathcal{L}_{n},\preceq_{n})$ is a subposet of $(\mathcal{L}_{n+1},\preceq_{n+1})$ and thus one defines a partial order $\preceq$ on $\mathrm{Fin}(\mathbb{N})$ by: $I \preceq J$ if $I \preceq_{n} J$ for some $n$.

\begin{figure}[ht] 
\centering
\includegraphics[scale=1]{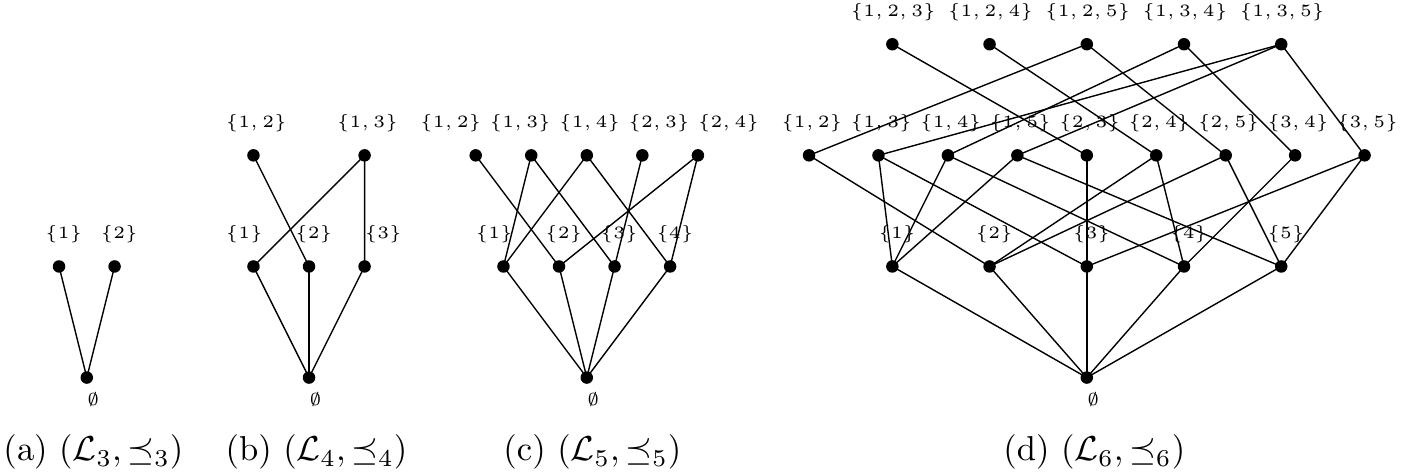}
\caption{Posets $(\mathcal{L}_{n},\preceq_{n})$ for $n=3,4,5,6$}
\label{fig:poset_Ln}
\end{figure}

Clearly, $I = \{i\} \in \mathcal{L}_{i+1}$, and if $I \in \mathcal{L}_{m}$ for some $m$ and $I \neq \emptyset$ then $I \cup \{j\} \in \mathcal{L}_{m+1}$ for any $j < \min I$. Therefore, given a non-empty finite subset $I$ of $\mathbb{N}$, $I \in \mathcal{L}_{\max I+|I|}$. Hence, there is a non-negative integer $n_{I}$ such that, $I \in \mathcal{L}_{n}$ for $n \geq n_{I}$ and $I \notin \mathcal{L}_{n}$ for $n < n_{I}$.

For $I,J \in  \mathrm{Fin}(\mathbb{N})$, let $n^{*} = \max\{n_{I}+2|J|,n_{J}\}$. Define an operation $I \oplus J$ by
\begin{equation*}
I \oplus J = \varphi_{n^{*}}(  \varphi_{n^{*}-2|J|}^{-1}(I) *_{v} \varphi_{n^{*}}^{-1}(J) ).
\end{equation*}
The geometric interpretation of $I \oplus J$ is given in Figure~\ref{fig:I_oplus_J}.

\begin{figure}[ht] 
\centering
\includegraphics[scale=1]{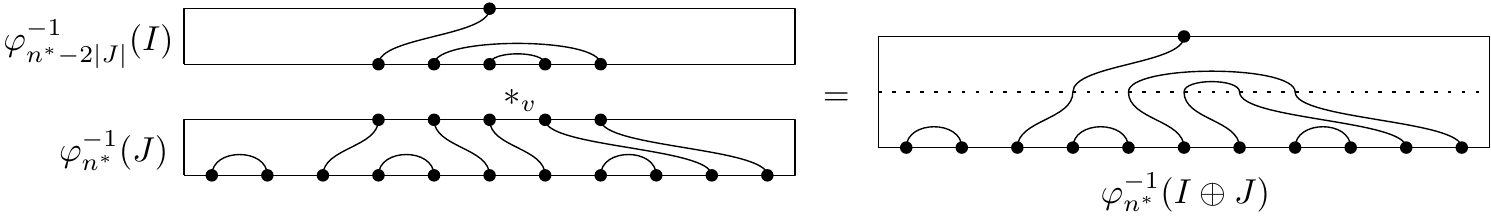}
\caption{Geometric interpretation of $I \oplus J$}
\label{fig:I_oplus_J}
\end{figure}

\begin{remark}
\label{rem:i_oplus_j}
Based on the geometric interpretation of $I \oplus J$ given above, it can easily be seen that
\begin{equation*}
\{i\} \oplus \{j\}
= \begin{cases}
\{i,j\}, & \text{if}\ i < j, \\
\{j,i+2\}, & \text{otherwise}, 
\end{cases}
=
\begin{cases}
\{j-2\} \oplus \{i\}, & \text{if} \ i \leq j-2, \\
\{j\} \oplus \{i+2\}, & \text{if} \ i \geq j.
\end{cases}
\end{equation*}
\end{remark}

\begin{proposition}
\label{prop:prop_oplus}
Let $I,J \in \mathrm{Fin}(\mathbb{N})$.
\begin{enumerate}
\item[i)] $I \oplus J = I \cup J$ if either $I = \emptyset$ or $J = \emptyset$, or both $I$ and $J$ are nonempty and $\max I < \min J$.
\item[ii)] $I \oplus J \in \mathcal{L}_{n}$ if and only if $n \geq \max\{n_{I}+2|J|,n_{J}\}$, i.e., $n_{I \oplus J} = \max\{n_{I}+2|J|,n_{J}\}$.
\item[iii)] If $n \geq n_{I \oplus J}$, then $\varphi_{n}^{-1}(I \oplus J) =  \varphi_{n-2|J|}^{-1}(I) *_{v} \varphi_{n}^{-1}(J)$. 
\item[iv)] If $I \in \mathcal{L}_{n-2|J|}$ and $J \in \mathcal{L}_{n}$, then $I \oplus J \in \mathcal{L}_{n}$.
\end{enumerate}
\end{proposition}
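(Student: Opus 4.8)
The plan is to prove the four parts in the order i), ii), iv), iii): part iv) is a direct consequence of ii), and part iii) follows from ii) by a ``stability'' bootstrap, so the real content lies in ii). Two structural facts from the discussion preceding the statement will be used repeatedly. First, the stability relation $\varphi_{m+1}^{-1}(K) = \varphi_m^{-1}(K) *_h L(0,1)$ for every $K \in \mathcal{L}_m$, valid because a return of $K$ has left end at most $m-1$, so appending a through-strand at the far right does not disturb it; iterating gives $\varphi_n^{-1}(K) = \varphi_{n_K}^{-1}(K) *_h L(0,n-n_K)$ for all $n \ge n_K$. Second, the interchange law relating $*_v$ and $*_h$, together with $L(0,p) *_v L(0,p) = L(0,p)$.

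For part i) I argue from the definition. If $J = \emptyset$ then $|J| = 0$ and $n^* = n_I$, and since $\varphi_{n_I}^{-1}(\emptyset) = L(0,n_I)$ is a $*_v$-identity the product collapses to $\varphi_{n_I}^{-1}(I)$, so $I \oplus \emptyset = I$; the case $I = \emptyset$ is symmetric. When $\max I < \min J$ I use the explicit shape of $\varphi_{n^*}^{-1}(J)$: its leftmost $\min J - 1$ through-strands sit at positions $1,\dots,\min J - 1$, so when $\varphi_{n^*-2|J|}^{-1}(I)$ is stacked on top as in Figure~\ref{fig:I_oplus_J}, each return of $I$ (whose left end is at most $\max I < \min J$) is attached to through-strands that retain their indices. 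Hence the returns coming from $I$ and from $J$ do not interleave and $I \oplus J = I \cup J$.

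Part ii) is the heart of the proof; the upper bound $n_{I\oplus J} \le n^*$ is immediate because $I \oplus J$ is built inside $\mathcal{L}_{n^*}$, so everything rests on the lower bound $n_{I\oplus J} \ge \max\{n_I+2|J|,\,n_J\}$. I would first record a closed formula for the threshold of a nonempty $K = \{k_1 < \cdots < k_r\}$, namely $n_K = \max_{1 \le a \le r}\big(k_a + 2(r-a) + 1\big)$, obtained by realizing the returns with left ends $K$ via the tightest (close-as-soon-as-possible) non-crossing matching and checking it is both forced and length-minimal. Next I make precise the picture of Figure~\ref{fig:I_oplus_J}: writing $t_1 < t_2 < \cdots$ for the through-strand positions of $\varphi_{\cdot}^{-1}(J)$, one has $I \oplus J = J \cup \{t_i : i \in I\}$, and each such position satisfies $t_i = i + 2p_i$ with $p_i$ the number of $J$-returns closed to its left (this specializes to Remark~\ref{rem:i_oplus_j} when $I$ and $J$ are singletons). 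Substituting into the formula, a short rank computation shows that the term contributed by $t_{i_a}$ equals the $a$-th term $i_a + 2(|I|-a)+1$ of $n_I$ increased by $2|J|$, whereas the term contributed by each $j_b \in J$ dominates the $b$-th term of $n_J$. Taking the maximum over all elements of $I \oplus J$ then gives exactly $n_{I\oplus J} = \max\{n_I + 2|J|,\,n_J\}$.

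Given ii), part iv) is immediate: $I \in \mathcal{L}_{n-2|J|}$ and $J \in \mathcal{L}_n$ say precisely that $n \ge n_I + 2|J|$ and $n \ge n_J$, hence $n \ge n_{I\oplus J}$ and $I \oplus J \in \mathcal{L}_n$. For part iii) I bootstrap from the defining identity, which is exactly the case $n = n^* = n_{I\oplus J}$, to all $n \ge n_{I\oplus J}$: applying the stability relation to each of $I \oplus J$, $I$, and $J$ and then the interchange law (with $L(0,n-n^*) *_v L(0,n-n^*) = L(0,n-n^*)$) rewrites $\varphi_n^{-1}(I\oplus J)$ as $\varphi_{n-2|J|}^{-1}(I) *_v \varphi_n^{-1}(J)$. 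I expect the main obstacle to be precisely the lower bound in ii): excluding membership $I \oplus J \in \mathcal{L}_n$ for $n < n^*$ demands genuine combinatorial control of how the returns of $I$ are relocated when pushed through $J$, and supplying that control is exactly the purpose of the through-strand description together with the closed form for $n_K$.
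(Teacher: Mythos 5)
Your proposal is correct, and on three of the four parts it essentially coincides with the paper's proof: your i) is the paper's argument (left ends of the $I$-returns land on the index-preserving through-strands at positions $1,\dots,\min J-1$), your iii) is the same stability-plus-interchange computation the paper performs, and iv) is read off from ii) in both. The genuine divergence is in ii), specifically the lower bound $n_{I\oplus J}\geq\max\{n_{I}+2|J|,n_{J}\}$, which you correctly identify as the heart of the matter. The paper disposes of it extremally in one line: since $n^{*}=\max\{n_{I}+2|J|,n_{J}\}$, either $n^{*}-2|J|=n_{I}$ or $n^{*}=n_{J}$, and in either case the rightmost bottom point $x'_{n^{*}}$ of $\varphi_{n^{*}}^{-1}(I\oplus J)$ is the right end of a bottom return (either directly a $J$-return, or the image of the $I$-return ending at $x'_{n^{*}-2|J|}$ under the last through-strand of $\varphi_{n^{*}}^{-1}(J)$), so $I\oplus J\notin\mathcal{L}_{n}$ for $n<n^{*}$, using the same structural fact you invoke, namely that $n_{K}$ equals the largest right-end index (cf.\ Remark~\ref{rem:oplus}). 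You instead build quantitative machinery: the closed form $n_{K}=\max_{a}\bigl(k_{a}+2(r-a)+1\bigr)$, the through-strand description $I\oplus J=J\cup\{t_{i}\mid i\in I\}$ with $t_{i}=i+2p_{i}$, and then the rank computation, whose key point is $p_{i_{a}}+m_{a}=|J|$ (every $J$-return lies entirely left or entirely right of a given through-strand, since returns cannot enclose through-strands in a crossingless diagram); I verified that this makes the $t_{i_{a}}$-term equal exactly $i_{a}+2(|I|-a)+1+2|J|$ and the $j_{b}$-terms dominate those of $n_{J}$, which with the trivial upper bound gives equality. The paper's route is shorter and needs no formulas; yours costs two auxiliary lemmas that you only sketch (both are provable by the arguments you indicate) but yields reusable output -- an explicit element-wise formula for $n_{I\oplus J}$ and a through-strand formula generalizing Remark~\ref{rem:i_oplus_j}. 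One wording slip in your i) is worth fixing: it is not true that \emph{both} ends of each $I$-return attach to index-retaining through-strands (take $I=\{1\}$, $J=\{2\}$: the right end attaches to the strand pushed to position $4$); only the left ends are guaranteed to, which is all that $\varphi_{n^{*}}$ records, so your conclusion is unaffected.
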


\begin{proof}
Let $n^{*} = \max\{n_{I}+2|J|,n_{J}\}$. For i), if either $I = \emptyset$ or $J = \emptyset$, then clearly $I \oplus J = I \cup J$. Thus, assume that $I$ and $J$ are nonempty and notice that $J$ is a subset of indices of bottom returns of $F = \varphi_{n^{*}-2|J|}^{-1}(I) *_{v} \varphi_{n^{*}}^{-1}(J)$. Since for each $i = 1,2,\ldots,\min J -1$ there is an arc joining $x_{i}$ and $x'_{i}$ in $\varphi_{n^{*}}^{-1}(J)$ and $\max I \leq \min J -1$, it follows that indices of bottom returns of $\varphi_{n^{*}-2|J|}^{-1}(I)$ are among the indices of bottom returns of $F$. Therefore, $I \cup J \subseteq I \oplus J$. Since $|I \oplus J| = |I|+|J| = |I \cup J|$, we see that $I \oplus J = I \cup J$.

\medskip

For ii), by definition $I \oplus J \in \mathcal{L}_{n^{*}}$, so $I \oplus J \in \mathcal{L}_{n}$ for any $n \geq n^{*}$. To prove ii), it suffices to show that if $n < n^{*}$ then $I \oplus J \notin \mathcal{L}_{n}$. Indeed, either $n^{*}-2|J| = n_{I}$ or $n^{*} = n_{J}$. Therefore, $x'_{n^{*}-2|J|}$ is a right end of a bottom return in $\varphi_{n^{*}-2|J|}^{-1}(I)$ or $x'_{n^{*}}$ is a right end of a bottom return in $\varphi_{n^{*}}^{-1}(J)$. Hence, $x'_{n^{*}}$ is a right end of a bottom return in $\varphi_{n^{*}}^{-1}(I \oplus J)$. It follows that $I \oplus J \notin \mathcal{L}_{n}$ for any $n < n^{*}$.

\medskip

For iii), we see that by ii) $n_{I \oplus J} = n^{*}$, so $I \in \mathcal{L}_{n^{*}-2|J|} \subseteq \mathcal{L}_{n-2|J|}$ and $J \in \mathcal{L}_{n^{*}} \subseteq \mathcal{L}_{n}$ for $n \geq n_{I \oplus J}$, and therefore $\varphi_{n-2|J|}^{-1}(I)$ and $\varphi_{n}^{-1}(J)$ are bottom states. Consequently,
\begin{eqnarray*}
\varphi_{n}^{-1}(I \oplus J)
&=& \varphi_{n^{*}}^{-1}(I \oplus J) *_{h} L(0,n-n^{*})
= (\varphi_{n^{*}-2|J|}^{-1}(I) *_{v} \varphi_{n^{*}}^{-1}(J)) *_{h} (L(0,n-n^{*}) *_{v} L(0,n-n^{*}) ) \\
&=& (\varphi_{n^{*}-2|J|}^{-1}(I) *_{h} L(0,n-n^{*})) *_{v} (\varphi_{n^{*}}^{-1}(J) *_{h} L(0,n-n^{*})) \\
&=& \varphi_{n-2|J|}^{-1}(I) *_{v} \varphi_{n}^{-1}(J).
\end{eqnarray*}

\medskip

For iv), we notice that, since $I \in \mathcal{L}_{n-2|J|}$ and $J \in \mathcal{L}_{n}$, by definition of $n_{I}$ and $n_{J}$, $n-2|J| \geq n_{I}$ and $n \geq n_{J}$, i.e., $n \geq \max\{n_{I}+2|J|,n_{J}\}$. Therefore, using ii) we conclude that $I \oplus J \in \mathcal{L}_{n}$.
\end{proof}

\begin{remark}
\label{rem:oplus}
Let $I = \{i_{1}, i_{2}, \ldots, i_{t}\} \in \mathcal{L}_{n}$ with $I \neq \emptyset$, then by Proposition~\ref{prop:prop_oplus}(i) and (iii), we see that
\begin{equation*}
\varphi_{n}^{-1}(I) = \varphi_{n}^{-1}(\{i_{1}\} \oplus \{i_{2}\} \oplus \cdots \oplus \{i_{t}\}) = \varphi_{n-2t+2}^{-1}(\{i_{1}\}) *_{v} \varphi_{n-2t+4}^{-1}(\{i_{2}\}) *_{v} \cdots *_{v} \varphi_{n}^{-1}(\{i_{t}\}).
\end{equation*}
Thus, $\varphi_{n}^{-1}(I)$ can be found for all $I \in \mathcal{L}_{n}$. Furthermore, since $n_{I} = \max I'$, where $I'$ is the set of indices of right ends of the bottom returns of $\varphi_{i_{t}+|I|}^{-1}(I)$, we can also find $n_{I}$.
\end{remark}

\begin{lemma} 
\label{lem:F_J_I}
For every roof state $R$, floor state $F$, crossingless connection $C$, and $I,J \in \mathrm{Fin}(\mathbb{N})$,
\begin{equation}
\label{eqn:M_vprod_F_I}
(R *_{v} F)_{I} = R *_{v} F_{I}
\end{equation}
and 
\begin{equation}
\label{eqn:F_J_I}
(C_{J})_{I} = C_{I \oplus J}.
\end{equation}
\end{lemma}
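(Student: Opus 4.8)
The plan is to derive both identities directly from the defining factorizations of $C_I$, using associativity of the vertical product together with the uniqueness/cancellation property for $*_v$ recorded in Section~\ref{s:pre}, and then to dispatch the degenerate cases (where some side equals $K_0$) by means of the numerical inequalities in Proposition~\ref{prop:prop_oplus}. Throughout, the recurring mechanism is: if $C = C' *_{v} \varphi_{n}^{-1}(K)$ with $\varphi_{n}^{-1}(K)$ a bottom state (hence a floor state) and $C' \neq K_{0}$, then $C' = C_{K}$ by the right-cancellation for floor states.

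For \eqref{eqn:M_vprod_F_I}, write $n = n_{b}(F)$, so that $n_{b}(R *_{v} F) = n$. The first observation is that the bottom returns of $R *_{v} F$ coincide with those of $F$: since $R$ is a roof state it has no bottom returns, so any arc of $R *_{v} F$ joining two bottom-boundary points lies entirely in $F$ (it cannot be created inside $R$), and conversely every bottom return of $F$ survives; hence $F$ admits the factorization through $\varphi_{n}^{-1}(I)$ if and only if $R *_{v} F$ does. Consequently, if $I \notin \mathcal{L}_{n}$ or $F_{I} = K_{0}$, both sides are $K_{0}$. In the remaining case $F = F_{I} *_{v} \varphi_{n}^{-1}(I)$ with $F_{I} \neq K_{0}$, and then
\begin{equation*}
R *_{v} F = R *_{v} (F_{I} *_{v} \varphi_{n}^{-1}(I)) = (R *_{v} F_{I}) *_{v} \varphi_{n}^{-1}(I).
\end{equation*}
Because $n_{b}(R) = n_{t}(F) = n_{t}(F_{I})$, the product $R *_{v} F_{I}$ is a genuine crossingless connection, so uniqueness of the factorization defining $(R *_{v} F)_{I}$ yields $(R *_{v} F)_{I} = R *_{v} F_{I}$.

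For \eqref{eqn:F_J_I}, set $n = n_{b}(C)$; then $C_{J}$, when nonzero, satisfies $n_{b}(C_{J}) = n - 2|J|$, so $(C_{J})_{I}$ is formed relative to $\mathcal{L}_{n-2|J|}$. In the principal case $J \in \mathcal{L}_{n}$, $I \in \mathcal{L}_{n-2|J|}$, and both $C_{J} \neq K_{0}$ and $(C_{J})_{I} \neq K_{0}$, I substitute $C_{J} = (C_{J})_{I} *_{v} \varphi_{n-2|J|}^{-1}(I)$ into $C = C_{J} *_{v} \varphi_{n}^{-1}(J)$ and regroup:
\begin{equation*}
C = (C_{J})_{I} *_{v} \left( \varphi_{n-2|J|}^{-1}(I) *_{v} \varphi_{n}^{-1}(J) \right) = (C_{J})_{I} *_{v} \varphi_{n}^{-1}(I \oplus J),
\end{equation*}
where the last equality is Proposition~\ref{prop:prop_oplus}(iii), applicable since Proposition~\ref{prop:prop_oplus}(iv) gives $I \oplus J \in \mathcal{L}_{n}$, i.e.\ $n \geq n_{I \oplus J}$. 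Uniqueness of the factorization then gives $(C_{J})_{I} = C_{I \oplus J}$.

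It remains to reconcile the degenerate cases, where the bookkeeping is most delicate. If $J \notin \mathcal{L}_{n}$ then $n < n_{J} \leq n_{I \oplus J}$ by Proposition~\ref{prop:prop_oplus}(ii), so $I \oplus J \notin \mathcal{L}_{n}$ and both sides vanish; likewise if $I \notin \mathcal{L}_{n-2|J|}$ then $n < n_{I} + 2|J| \leq n_{I \oplus J}$ and again both sides are $K_{0}$. The genuinely nontrivial case is $J \in \mathcal{L}_{n}$, $I \in \mathcal{L}_{n-2|J|}$, $C_{J} \neq K_{0}$, yet $(C_{J})_{I} = K_{0}$: here I argue contrapositively, assuming $C_{I \oplus J} \neq K_{0}$ and running the displayed computation backwards. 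Since the relevant boundary-point counts match, namely $n_{b}(C_{I \oplus J}) = n - 2(|I|+|J|) = n_{t}(\varphi_{n-2|J|}^{-1}(I))$, the product $C_{I \oplus J} *_{v} \varphi_{n-2|J|}^{-1}(I)$ is a genuine crossingless connection, and grouping it against $\varphi_{n}^{-1}(J)$ exhibits it as $C_{J}$; this forces $(C_{J})_{I} = C_{I \oplus J} \neq K_{0}$, a contradiction. The main obstacle throughout is precisely this $K_{0}$-bookkeeping: at every regrouping one must check that the boundary-point counts agree so that no vertical product silently collapses to $K_{0}$, and that the inequalities among $n_{I}$, $n_{J}$, and $n_{I \oplus J}$ from Proposition~\ref{prop:prop_oplus} line up to cover each way in which a side can degenerate.
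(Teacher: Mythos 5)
Your proof is correct and runs on essentially the same machinery as the paper's: both identities are extracted from the defining factorizations by associativity of $*_{v}$ and cancellation against floor states, with Proposition~\ref{prop:prop_oplus}(ii)--(iv) supplying the arithmetic for the degenerate cases. The one organizational difference is that you prove \eqref{eqn:F_J_I} for a general crossingless connection $C$ directly, whereas the paper first uses \eqref{eqn:M_vprod_F_I} to reduce to the case where $C$ is a bottom state and then phrases the case analysis in the language of the partial order $\preceq$; your direct route is equally valid, since the paper's conditions $J \preceq \varphi_{n}(F)$ and $I \preceq \varphi_{n-2|J|}(F_{J})$ are just its notation for your conditions $F_{J} \neq K_{0}$ and $(F_{J})_{I} \neq K_{0}$. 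Two degenerate cases are left implicit and should be stated for completeness. First, in \eqref{eqn:M_vprod_F_I} your argument assumes $R *_{v} F \neq K_{0}$ from its opening line (you compute $n_{b}(R *_{v} F)$ and speak of its bottom returns), so the case $n_{b}(R) \neq n_{t}(F)$ must be added: there the left side is $K_{0}$ by definition, and the right side is $K_{0}$ because $F_{I}$, if nonzero, satisfies $n_{t}(F_{I}) = n_{t}(F) \neq n_{b}(R)$ --- this is exactly how the paper opens its proof. Second, in \eqref{eqn:F_J_I} your enumeration omits the case $J \in \mathcal{L}_{n}$, $I \in \mathcal{L}_{n-2|J|}$, $C_{J} = K_{0}$; however, the contrapositive argument you give for the ``genuinely nontrivial case'' never actually uses the hypothesis $C_{J} \neq K_{0}$ (it derives $C_{J} = C_{I \oplus J} *_{v} \varphi_{n-2|J|}^{-1}(I) \neq K_{0}$ from $C_{I \oplus J} \neq K_{0}$), so it covers this missing case verbatim; only the bookkeeping of cases, not the mathematics, needs repair.
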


\begin{proof}
Let $n = n_{b}(F)$. If $R *_{v} F = K_{0}$ then $n_{b}(R) \neq n_{t}(F)$. Since either $F_{I} = K_{0}$ or $F_{I}$ is a floor state with $n_{t}(F_{I}) = n_{t}(F)$, it follows that $R *_{v} F_{I} = K_{0}$. If $R *_{v} F$ is a crossingless connection, $R *_{v} F$ and $F$ have same bottom returns and then one argues that $(R *_{v} F)_{I} = K_{0}$ if and only if $F_{I} = K_{0}$. Therefore, it suffices to show \eqref{eqn:M_vprod_F_I} when $R *_{v} F \neq K_{0}$ and $F_{I} \neq K_{0}$. Indeed, in such a case,
\begin{equation*}
R *_{v} F = R *_{v} F_{I} *_{v} \varphi_{n}^{-1}(I),
\end{equation*}
so $(R *_{v} F)_{I} = R *_{v} F_{I}$.

To prove \eqref{eqn:F_J_I}, we see that by using \eqref{eqn:M_vprod_F_I} it suffices to assume that $C = F$ is a bottom state. Notice that, $(\varphi_{n'}^{-1}(I \oplus J))_{J} = \varphi_{n'-2|J|}^{-1}(I) \neq K_{0}$ for $n' = n_{I \oplus J}$, so $J \preceq_{n'} I \oplus J$ and consequently $J \preceq I \oplus J$. Hence, if $J \npreceq \varphi_{n}(F)$ then $I \oplus J \npreceq \varphi_{n}(F)$, and therefore
\begin{equation*}
(F_{J})_{I} = (K_{0})_{I} = K_{0} = F_{I \oplus J}.
\end{equation*}
Assume that $J \preceq \varphi_{n}(F)$. If $I \npreceq \varphi_{n-2|J|}(F_{J})$, we show that $I \oplus J \npreceq \varphi_{n}(F)$, so consequently 
\begin{equation*}
(F_{J})_{I} = K_{0} = F_{I \oplus J}. 
\end{equation*}
Suppose that $I \oplus J \preceq \varphi_{n}(F)$, then by Proposition~\ref{prop:prop_oplus}(ii), $n \geq n_{I \oplus J}$. Since by the definition and Proposition~\ref{prop:prop_oplus}(iii)
\begin{equation*}
F_{J} *_{v} \varphi_{n}^{-1}(J) = F = F_{I \oplus J} *_{v} \varphi_{n}^{-1}(I \oplus J) = F_{I \oplus J} *_{v} \varphi_{n-2|J|}^{-1}(I) *_{v} \varphi_{n}^{-1}(J),
\end{equation*}
it follows that
\begin{equation*}
F_{J} = F_{I \oplus J} *_{v} \varphi_{n-2|J|}^{-1}(I).
\end{equation*}
Therefore, $(F_{J})_{I} = F_{I \oplus J} \neq K_{0}$, i.e., $I \preceq \varphi_{n-2|J|}(F_{J})$, a contradiction. Finally, if $I \preceq \varphi_{n-2|J|}(F_{J})$, then $n \geq n_{I \oplus J}$ and by Proposition~\ref{prop:prop_oplus}(iii),
\begin{equation*}
F = F_{J} *_{v} \varphi_{n}^{-1}(J) = (F_{J})_{I} *_{v} \varphi_{n-2|J|}^{-1}(I) *_{v} \varphi_{n}^{-1}(J) = (F_{J})_{I} *_{v} \varphi_{n}^{-1}(I \oplus J),
\end{equation*}
hence, $F_{I \oplus J} = (F_{J})_{I}$.
\end{proof}

For $I,J \in \mathrm{Fin}(\mathbb{N})$ with $J \preceq I$, define
\begin{equation*}
I \ominus J = \varphi_{n_{I}-2|J|}( (\varphi_{n_{I}}^{-1}(I) )_{J} ).
\end{equation*}
The geometric interpretation of $I \ominus J$ is given in  Figure~\ref{fig:I_ominus_J}.

\begin{figure}[ht] 
\centering
\includegraphics[scale=1]{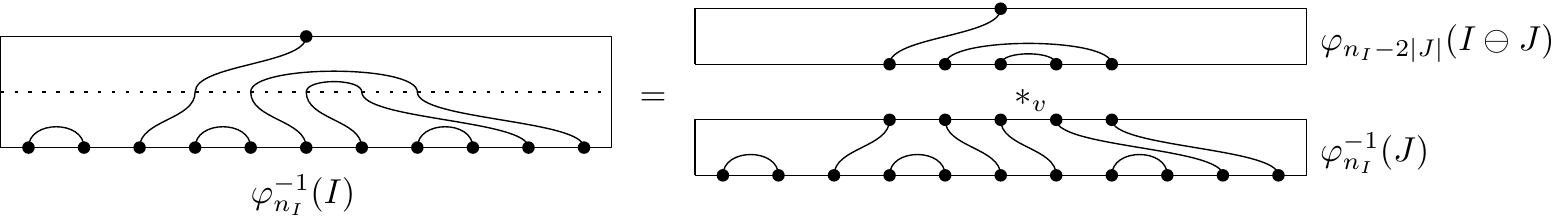}
\caption{Geometric interpretation of $I \ominus J$}
\label{fig:I_ominus_J}
\end{figure}

\begin{proposition}
\label{prop:prop_ominus}
Let $I,J,K \in \mathrm{Fin}(\mathbb{N})$ and $J \preceq I$.
\begin{enumerate}
\item[i)] If $I \in \mathcal{L}_{n}$, then $I \ominus J \in \mathcal{L}_{n-2|J|}$, so consequently  $\varphi_{n-2|J|}^{-1}(I \ominus J) = (\varphi_{n}^{-1}(I) )_{J}$. 
\item[ii)] 
$J \preceq K \oplus I$ and $(K \oplus I) \ominus J = K \oplus (I \ominus J)$. In particular, $(K \oplus I) \ominus I = K$.
\item[iii)]
$(I \ominus J) \oplus J = I$.
\item[iv)]
$J \oplus K \preceq I \oplus K$ and $(I \oplus K) \ominus (J \oplus K) = I \ominus J$.
\end{enumerate}
\end{proposition}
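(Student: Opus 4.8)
The plan is to translate every identity into a statement about bottom states by applying $\varphi_{n}^{-1}$ at a suitably large level $n$, to verify it there in terms of the geometric meaning of the two operations, and then to invoke injectivity of $\varphi_{n}$. Concretely, Proposition~\ref{prop:prop_oplus}(iii) realizes $\oplus$ as a vertical stacking of bottom states, $\varphi_{n}^{-1}(I\oplus J)=\varphi_{n-2|J|}^{-1}(I)*_{v}\varphi_{n}^{-1}(J)$, while part (i), once proved, realizes $\ominus$ as the peeling operation, $\varphi_{n-2|J|}^{-1}(I\ominus J)=(\varphi_{n}^{-1}(I))_{J}$. The three tools I would use repeatedly are: associativity of $*_{v}$; the cancellation law recorded in the Preliminaries (if $C_{1}*_{v}F=C_{2}*_{v}F\neq K_{0}$ for a floor state $F$ then $C_{1}=C_{2}$) together with injectivity of $\varphi_{n}$; and the fact that the $_{J}$ operation commutes with adjoining trivial vertical strands on the right, i.e. $(\varphi_{n+k}^{-1}(I))_{J}=(\varphi_{n}^{-1}(I))_{J}*_{h}L(0,k)$, which is exactly the identity used to establish the subposet inclusion $(\mathcal{L}_{n},\preceq_{n})\subseteq(\mathcal{L}_{n+1},\preceq_{n+1})$.

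First I would prove (i), since it underlies the rest. By definition $I\ominus J\in\mathcal{L}_{n_{I}-2|J|}$ and $\varphi_{n_{I}-2|J|}^{-1}(I\ominus J)=(\varphi_{n_{I}}^{-1}(I))_{J}$. For $n\ge n_{I}$, writing $\varphi_{n}^{-1}(I)=\varphi_{n_{I}}^{-1}(I)*_{h}L(0,n-n_{I})$ and applying the commuting identity gives $(\varphi_{n}^{-1}(I))_{J}=(\varphi_{n_{I}}^{-1}(I))_{J}*_{h}L(0,n-n_{I})=\varphi_{n_{I}-2|J|}^{-1}(I\ominus J)*_{h}L(0,n-n_{I})=\varphi_{n-2|J|}^{-1}(I\ominus J)$, a bottom state; hence $I\ominus J\in\mathcal{L}_{n-2|J|}$ and the stated formula holds at every $n\ge n_{I}$. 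I would also record the numerology $|I\ominus J|=|I|-|J|$, read off from the top/bottom counts of $(\varphi_{n}^{-1}(I))_{J}$, since it is needed below.

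With (i) in hand, each remaining identity becomes an equality of two peelings of the same bottom state. For (iii), fix $n\ge n_{I}$ and put $C'=(\varphi_{n}^{-1}(I))_{J}=\varphi_{n-2|J|}^{-1}(I\ominus J)$, so $C'*_{v}\varphi_{n}^{-1}(J)=\varphi_{n}^{-1}(I)$; Proposition~\ref{prop:prop_oplus}(iii) identifies $\varphi_{n}^{-1}((I\ominus J)\oplus J)$ with $C'*_{v}\varphi_{n}^{-1}(J)=\varphi_{n}^{-1}(I)$, and injectivity gives $(I\ominus J)\oplus J=I$ (the hypothesis $n\ge n_{(I\ominus J)\oplus J}$ follows from (i) and $J\in\mathcal{L}_{n}$ via Proposition~\ref{prop:prop_oplus}(ii)). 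For (ii), expand $\varphi_{N}^{-1}(K\oplus I)=\varphi_{N-2|I|}^{-1}(K)*_{v}\varphi_{N}^{-1}(I)$ for large $N$ and substitute $\varphi_{N}^{-1}(I)=(\varphi_{N}^{-1}(I))_{J}*_{v}\varphi_{N}^{-1}(J)$; this exhibits $\varphi_{N}^{-1}(J)$ as a bottom factor, proving $J\preceq K\oplus I$, and the cancellation law yields $(\varphi_{N}^{-1}(K\oplus I))_{J}=\varphi_{N-2|I|}^{-1}(K)*_{v}(\varphi_{N}^{-1}(I))_{J}$. Rewriting the right-hand side with (i) and Proposition~\ref{prop:prop_oplus}(iii) (using $|I\ominus J|=|I|-|J|$, so that $(N-2|J|)-2|I\ominus J|=N-2|I|$) identifies it with $\varphi_{N-2|J|}^{-1}(K\oplus(I\ominus J))$, while (i) applied to $K\oplus I$ rewrites the left-hand side as $\varphi_{N-2|J|}^{-1}((K\oplus I)\ominus J)$; injectivity finishes it, and the special case follows since $I\ominus I=\emptyset$ and $K\oplus\emptyset=K$ by Proposition~\ref{prop:prop_oplus}(i). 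Part (iv) is the same maneuver one level deeper: from $\varphi_{N}^{-1}(I\oplus K)=\varphi_{N-2|K|}^{-1}(I)*_{v}\varphi_{N}^{-1}(K)$, peel $J$ out of the $\varphi_{N-2|K|}^{-1}(I)$ factor using (i), then recombine $\varphi_{N-2|K|}^{-1}(J)*_{v}\varphi_{N}^{-1}(K)=\varphi_{N}^{-1}(J\oplus K)$ by Proposition~\ref{prop:prop_oplus}(iii); the resulting decomposition both proves $J\oplus K\preceq I\oplus K$ and, after one more application of (i) to $I\oplus K$ and $J\oplus K$, yields $(I\oplus K)\ominus(J\oplus K)=I\ominus J$.

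The step I expect to be the main obstacle is legitimizing the peeling arguments at the level of well-definedness: everything above presumes $J\preceq I\Rightarrow J\preceq_{n_{I}}I$, so that $\ominus$ is defined at the minimal level $n_{I}$ and agrees with the formula for all $n\ge n_{I}$. The ``going up'' direction is the identity already used for the subposet inclusion; the ``going down'' direction—that adjoining trivial strands on the right cannot create a new peeling—requires a cancellation law for $*_{h}$ (if $A*_{h}L(0,1)=B*_{h}L(0,1)$ then $A=B$) together with the observation that every bottom return of $\varphi_{m}^{-1}(I)$ has right end $\le n_{I}$, which forces $n_{J}\le n_{I}$ whenever $J\preceq I$. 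Apart from this, the only care required is the bookkeeping of the subscripts $N,\,N-2|J|,\,N-2|I|,\,N-2|J|-2|K|,\ldots$ and the verification at each step that the intermediate vertical products are genuine tangles rather than $K_{0}$, which is guaranteed by taking $N$ large and by the inequalities in Proposition~\ref{prop:prop_oplus}(ii).
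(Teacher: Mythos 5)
Your proof follows essentially the same route as the paper's: part (i) via the compatibility of peeling with appending trivial strands ($(C)_{J} *_{h} L(0,k) = (C *_{h} L(0,k))_{J}$), and parts (ii)--(iv) by decomposing the relevant bottom states with Proposition~\ref{prop:prop_oplus}(iii), peeling with part (i), applying the cancellation law for $*_{v}$, and concluding by injectivity of $\varphi_{n}$. The only differences are cosmetic: you prove (iii) before (ii) (neither depends on the other), and you explicitly flag and resolve the ``going-down'' well-definedness of $\ominus$ at the minimal level $n_{I}$, a point the paper leaves implicit in its definition of $\ominus$ and its subposet claim.
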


\begin{proof}
For i), since $n \geq n_{I}$, $I \ominus J \in \mathcal{L}_{n_{I}-2|J|} \subseteq \mathcal{L}_{n-2|J|}$. Consequently,
\begin{eqnarray*}
\varphi_{n-2|J|}^{-1}(I \ominus J) &=& \varphi_{n_{I}-2|J|}^{-1}(I \ominus J) *_{h} L(0,n-n_{I})
= (\varphi_{n_{I}}^{-1}(I))_{J} *_{h} L(0,n-n_{I}) \\
&=& (\varphi_{n_{I}}^{-1}(I) *_{h} L(0,n-n_{I}))_{J} \\
&=& (\varphi_{n}^{-1}(I))_{J}.
\end{eqnarray*}

\medskip

For ii), let $n = n_{K \oplus I}$ then clearly $(\varphi_{n}^{-1}(K \oplus I))_{I} = \varphi_{n-2|I|}^{-1}(K) \neq K_{0}$. Therefore, $I \preceq_{n} K \oplus I$ and consequently $J \preceq I \preceq K \oplus I$. Moreover,
\begin{equation*}
(\varphi_{n}^{-1}(K \oplus I))_{J} *_{v} \varphi_{n}^{-1}(J)
= \varphi_{n}^{-1}(K \oplus I)
= \varphi_{n-2|I|}^{-1}(K) *_{v} \varphi_{n}^{-1}(I) = \varphi_{n-2|I|}^{-1}(K) *_{v} (\varphi_{n}^{-1}(I))_{J} *_{v} \varphi_{n}^{-1}(J),
\end{equation*}
so $(\varphi_{n}^{-1}(K \oplus I))_{J} = \varphi_{n-2|I|}^{-1}(K) *_{v} (\varphi_{n}^{-1}(I))_{J}$. Hence,
\begin{eqnarray*}
\varphi_{n-2|J|}^{-1}((K \oplus I) \ominus J) &=& (\varphi_{n}^{-1}(K \oplus I))_{J} 
= \varphi_{n-2|I|}^{-1}(K) *_{v} (\varphi_{n}^{-1}(I))_{J} \\
&=& \varphi_{n-2|I|}^{-1}(K) *_{v} \varphi_{n-2|J|}^{-1}(I \ominus J) \\
&=& \varphi_{n-2|J|}^{-1}(K \oplus (I \ominus J)),
\end{eqnarray*}
and consequently $(K \oplus I) \ominus J = K \oplus (I \ominus J)$.
In particular, since $I \ominus I = \emptyset$ and $K \oplus \emptyset = K$, taking $J = I$ yields $(K \oplus I) \ominus I = K$.
\medskip

For iii), since
$\varphi_{n_{I}-2|J|}^{-1}(I \ominus J) = (\varphi_{n_{I}}^{-1}(I))_{J}$,
\begin{equation*}
\varphi_{n_{I}}^{-1}((I \ominus J) \oplus J) = \varphi_{n_{I}-2|J|}^{-1}(I \ominus J) *_{v} \varphi_{n_{I}}^{-1}(J) = (\varphi_{n_{I}}^{-1}(I))_{J} *_{v} \varphi_{n_{I}}^{-1}(J) = \varphi_{n_{I}}^{-1}(I).
\end{equation*} 
Therefore, $(I \ominus J) \oplus J = I$.

\medskip

For iv), since $J \preceq I$, by iii) and Proposition~\ref{prop:prop_oplus}(iii), 
\begin{equation*}
\varphi_{n}^{-1}(I) = \varphi_{n}^{-1}((I \ominus J) \oplus J) = \varphi_{n-2|J|}^{-1}(I \ominus J) *_{v} \varphi_{n}^{-1}(J)
\end{equation*} 
for all $n \geq n_{I}$. Since $n_{I} \geq n_{J}$, using Proposition~\ref{prop:prop_oplus}(ii) one shows that
\begin{equation*}
n_{I \oplus K} = \max\{n_{I}+2|K|,n_{K}\} \geq \max\{n_{I}+2|K|,n_{J}+2|K|,n_{K}\} = \max\{n_{I}+2|K|,n_{J \oplus K}\}
\end{equation*}
Therefore, for $n = n_{I \oplus K}$,
\begin{eqnarray*}
\varphi_{n}^{-1}(I \oplus K) &=& \varphi_{n-2|K|}^{-1}(I) *_{v} \varphi_{n}^{-1}(K) = \varphi_{n-2|K|-2|J|}^{-1}(I \ominus J) *_{v} \varphi_{n-2|K|}^{-1}(J) *_{v} \varphi_{n}^{-1}(K) \\
&=& \varphi_{n-2|J \oplus K|}^{-1}(I \ominus J) *_{v} \varphi_{n}^{-1}(J \oplus K).
\end{eqnarray*}
It follows that $J \oplus K \preceq I \oplus K$. Moreover, 
\begin{equation*}
\varphi_{n-2|J \oplus K|}^{-1}((I \oplus K) \ominus (J \oplus K)) = (\varphi_{n}^{-1}(I \oplus K))_{J \oplus K} = \varphi_{n-2|J \oplus K|}^{-1}(I \ominus J),
\end{equation*}
so $(I \oplus K) \ominus (J \oplus K) = I \ominus J$.
\end{proof}

Let $\mathcal{W}$ be the set of all pairs $(R,I)$, where $R$ is a roof state and $I \in \mathrm{Fin}(\mathbb{N})$. Define a relation $\trianglelefteq$ on $\mathcal{W}$ as follows: $(R,I) \trianglelefteq (R',I')$ if
\begin{enumerate}
\item [a)] $I' \ominus I \in \mathcal{L}_{n_{t}(R)}$,
\item [b)] $n_{t}(R)+2|I| = n_{t}(R')+2|I'|$, and 
\item [c)] $n_{b}(R) = n_{b}(R')$.
\end{enumerate}
We also use $(R',I') \trianglerighteq (R,I)$ when $(R,I) \trianglelefteq (R',I')$ and, for any $\mathcal{P}' \subset \mathcal{W}$, we write $\mathcal{P}' \trianglerighteq (R,I)$ if $(R',I') \trianglerighteq (R,I)$ for all $(R',I') \in \mathcal{P}'$. The relation $\trianglelefteq$ is transitive as shown in the following lemma.

\begin{lemma}
\label{lem:transitivity_triangle}
If $(R,I) \trianglelefteq (R',I')$ and $(R',I') \trianglelefteq (R'',I'')$, then $(R,I) \trianglelefteq (R'',I'')$.
\end{lemma}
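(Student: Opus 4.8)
The plan is to dispose of conditions b) and c) immediately and to concentrate all the work on condition a). Writing the hypotheses out explicitly: from $(R,I) \trianglelefteq (R',I')$ we have $I' \ominus I \in \mathcal{L}_{n_{t}(R)}$ (so in particular $I \preceq I'$, which is what makes $I' \ominus I$ defined), $n_{t}(R)+2|I| = n_{t}(R')+2|I'|$, and $n_{b}(R)=n_{b}(R')$; from $(R',I') \trianglelefteq (R'',I'')$ we have $I'' \ominus I' \in \mathcal{L}_{n_{t}(R')}$ (so $I' \preceq I''$), $n_{t}(R')+2|I'| = n_{t}(R'')+2|I''|$, and $n_{b}(R')=n_{b}(R'')$. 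Chaining the two numerical identities gives $n_{t}(R)+2|I| = n_{t}(R'')+2|I''|$ and $n_{b}(R)=n_{b}(R'')$, which are conditions b) and c) for $(R,I) \trianglelefteq (R'',I'')$. Since $\preceq$ is a partial order, $I \preceq I' \preceq I''$ yields $I \preceq I''$, so $I'' \ominus I$ is defined and condition a) is all that remains.

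The key step is a \emph{chain rule} for $\ominus$, namely
\[
I'' \ominus I = (I'' \ominus I') \oplus (I' \ominus I).
\]
To derive it, set $B = I'' \ominus I'$. By Proposition~\ref{prop:prop_ominus}(iii) applied to $I' \preceq I''$ we have $B \oplus I' = I''$. Now apply Proposition~\ref{prop:prop_ominus}(ii) with $K = B$ and the pair $I \preceq I'$: this yields both $I \preceq B \oplus I' = I''$ and $(B \oplus I') \ominus I = B \oplus (I' \ominus I)$, which is exactly the displayed identity after substituting $B \oplus I' = I''$. Note that no associativity of $\oplus$ is needed here, only Proposition~\ref{prop:prop_ominus}(ii) and (iii).

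It then remains to show $(I'' \ominus I') \oplus (I' \ominus I) \in \mathcal{L}_{n_{t}(R)}$, and for this I would invoke Proposition~\ref{prop:prop_oplus}(iv) with outer summand $I'' \ominus I'$, inner summand $A := I' \ominus I$, and $n = n_{t}(R)$. The hypothesis $A \in \mathcal{L}_{n_{t}(R)}$ is precisely condition a) of the first relation. The remaining hypothesis is $I'' \ominus I' \in \mathcal{L}_{n_{t}(R)-2|A|}$, and this is where the index bookkeeping must be done carefully. From $A \oplus I = I'$ (Proposition~\ref{prop:prop_ominus}(iii)) and additivity of cardinality under $\oplus$ one gets $|A| = |I'|-|I|$, so using condition b) of the first relation, $n_{t}(R)-2|A| = n_{t}(R)+2|I|-2|I'| = n_{t}(R')$. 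Since $I'' \ominus I' \in \mathcal{L}_{n_{t}(R')}$ by condition a) of the second relation, the required membership holds (as the two indices are in fact equal), and Proposition~\ref{prop:prop_oplus}(iv) gives $(I'' \ominus I') \oplus (I' \ominus I) \in \mathcal{L}_{n_{t}(R)}$, i.e. $I'' \ominus I \in \mathcal{L}_{n_{t}(R)}$, which is condition a).

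The main obstacle I anticipate is establishing the chain rule cleanly: one must peel $I''$ apart as $(I'' \ominus I') \oplus I'$ and then recognize that the telescoping produced by Proposition~\ref{prop:prop_ominus}(ii) collapses exactly to $(I'' \ominus I') \oplus (I' \ominus I)$. The subsequent membership argument is a routine bookkeeping exercise whose only non-obvious ingredient is that condition b) of the first relation forces $n_{t}(R)-2|A| = n_{t}(R')$, turning the inequality one might expect into an equality and letting Proposition~\ref{prop:prop_oplus}(iv) apply directly.
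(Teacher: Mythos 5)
Your proposal is correct and follows essentially the same route as the paper: conditions b) and c) by chaining the equalities, then the identity $I'' \ominus I = ((I'' \ominus I') \oplus I') \ominus I = (I'' \ominus I') \oplus (I' \ominus I)$ via Proposition~\ref{prop:prop_ominus}(iii) and (ii), and finally Proposition~\ref{prop:prop_oplus}(iv) with the index computation $n_{t}(R) - 2|I' \ominus I| = n_{t}(R')$ forced by condition b). Your explicit derivation of $I \preceq I''$ from Proposition~\ref{prop:prop_ominus}(ii) is a slightly more careful touch than the paper's appeal to transitivity of $\preceq$, but the argument is the same.
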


\begin{proof}
For $(R,I)$ and $(R'',I'')$, conditions b) and c) given in the definition of $\trianglelefteq$ are clearly satisfied. Notice that $I \preceq I' \preceq I''$, so $I'' \ominus I$ is defined. Since $I'' \ominus I' \in \mathcal{L}_{n_{t}(R')} = \mathcal{L}_{n+2|I|-2|I'|} = \mathcal{L}_{n-2|I' \ominus I|}$ and $I' \ominus I \in \mathcal{L}_{n}$, where $n = n_{t}(R)$, it follows by Proposition~\ref{prop:prop_ominus}(iii), (ii), and Proposition~\ref{prop:prop_oplus}(iv) that 
\begin{equation*}
I'' \ominus I = ((I'' \ominus I') \oplus I') \ominus I = (I'' \ominus I') \oplus (I' \ominus I) \in \mathcal{L}_{n},
\end{equation*}
i.e., condition a) also holds. Therefore, $(R,I) \trianglelefteq (R'',I'')$.
\end{proof}

Given $(R,I) \in \mathcal{W}$, let $\Theta_{A}(R,I;\cdot)$ be a function of variable $F$ defined by
\begin{equation}
\label{eqn:Theta_A_def}
\Theta_{A}(R,I;F) = [[R *_{v} F_{I}]]_{A},
\end{equation}
where $F$ is a floor state or $F = K_{0}$.

\begin{proposition}
\label{prop:P_prime_concat_reflect_formula}
Let $(R,I) \in \mathcal{W}$ with $n_{t}(R) = n$ and $n_{b}(R) = k$. Assume that there is a finite collection $\mathcal{P} \subset \mathcal{W}$ of pairs $(R',I')$ with $I \preceq I'$, and rational functions $Q_{R',I'}(A) \in \mathbb{Q}(A)$ for each $(R',I')\in \mathcal{P}$, such that
\begin{equation*}
\Theta_{A}(R,I;\cdot) = \sum_{(R',I') \in \mathcal{P}} Q_{R',I'}(A) \, \Theta_{A}(R',I';\cdot).
\end{equation*}
Then
\begin{equation}
\label{eqn:sum_over_P_prime}
\Theta_{A}(R,I;\cdot) = \sum_{(R',I') \in \mathcal{P}'} Q_{R',I'}(A) \, \Theta_{A}(R',I';\cdot),
\end{equation}
where $\mathcal{P}'$ is the set of all pairs $(R',I') \in \mathcal{P}$ such that $(R',I') \trianglerighteq (R,I)$.
Furthermore, the following are true:
\begin{enumerate}
\item[i)] Let $J \in \mathrm{Fin}(\mathbb{N})$, then $(R,I \oplus J) \trianglelefteq (R',I' \oplus J)$ for all $(R',I') \in \mathcal{P}'$ and
\begin{equation*}
\Theta_{A}(R,I \oplus J;\cdot) = \sum_{(R',I') \in \mathcal{P}'} Q_{R',I'}(A) \, \Theta_{A}(R',I' \oplus J;\cdot).
\end{equation*}
\item[ii)] Let $M$ be a middle state with $n_{t}(M) = k$, then $(R *_{v} M,I) \trianglelefteq (R' *_{v} M,I')$ for all $(R',I') \in \mathcal{P}'$ and
\begin{equation*}
\Theta_{A}(R *_{v} M,I;\cdot) = \sum_{(R',I') \in \mathcal{P}'} Q_{R',I'}(A) \, \Theta_{A}(R' *_{v} M,I';\cdot).
\end{equation*} 
\item[iii)] If $I = \emptyset$ in \eqref{eqn:sum_over_P_prime} then $(\overline{R},\emptyset) \trianglelefteq (\overline{R'},\overline{I'})$ for all $(R',I') \in \mathcal{P}'$ and
\begin{equation*}
\Theta_{A}(\overline{R},\emptyset;\cdot) = \sum_{(R',I') \in \mathcal{P}'} Q_{R',I'}(A^{-1}) \, \Theta_{A}(\overline{R'},\overline{I'};\cdot),
\end{equation*}
where $\overline{R}$ and $\overline{R'}$ are reflections of $R$ and $R'$ about a vertical line, and $\overline{I'} = \varphi_{n}(\overline{\varphi_{n}^{-1}(I')})$.
\end{enumerate}
\end{proposition}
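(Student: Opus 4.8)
The plan is to treat each $\Theta_{A}(R'',I'';\cdot)$ as a function on the set of floor states (together with $K_{0}$) with values in $\mathbb{Q}(A)$, and to exploit the fact that such a function is supported on a single ``size class'' of floor states. Indeed, $\Theta_{A}(R'',I'';F)=[[R''*_{v}F_{I''}]]_{A}$ can be nonzero only when $R''*_{v}F_{I''}$ is a Catalan state, which forces $n_{t}(F)=n_{b}(R'')$ and $n_{b}(F)=n_{t}(R'')+2|I''|$, since deleting the $|I''|$ bottom returns indexed by $I''$ removes $2|I''|$ bottom points but leaves $n_{t}$ unchanged. Thus $\Theta_{A}(R,I;\cdot)$ is supported on floor states $F$ with $n_{t}(F)=k$ and $n_{b}(F)=n+2|I|$. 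Evaluating the hypothesized $\mathcal{P}$-identity only at such $F$ annihilates every term whose size class differs, i.e. every $(R',I')$ violating condition b) ($n_{t}(R')+2|I'|=n+2|I|$) or condition c) ($n_{b}(R')=k$); this is precisely the disjointness of supports.

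It then remains to discard the surviving terms that satisfy b) and c) but violate a). Here I would use the factorization $F_{I'}=(F_{I})_{I'\ominus I}$, valid because $I\preceq I'$ gives $(I'\ominus I)\oplus I=I'$ by Proposition~\ref{prop:prop_ominus}(iii), whence $(F_{I})_{I'\ominus I}=F_{(I'\ominus I)\oplus I}=F_{I'}$ by \eqref{eqn:F_J_I}. For $F$ in the support of $\Theta_{A}(R,I;\cdot)$ one may assume $F_{I}\neq K_{0}$, so $F_{I}$ is a floor state with $n_{b}(F_{I})=n$; if $I'\ominus I\notin\mathcal{L}_{n}=\mathcal{L}_{n_{b}(F_{I})}$, then $(F_{I})_{I'\ominus I}=K_{0}$ by the very definition of the subscript operation, so $\Theta_{A}(R',I';F)=[[R'*_{v}K_{0}]]_{A}=0$. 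Hence every term violating a) vanishes identically on the support of the left-hand side and may be dropped; off that support both sides are $0$ because all surviving pairs share the same size class. This establishes \eqref{eqn:sum_over_P_prime}.

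For parts i)--iii) the strategy is uniform: rewrite the modified $\Theta_{A}$ as $\Theta_{A}(R,I;\cdot)$ (or its reflection) evaluated at a new floor-state argument, then invoke \eqref{eqn:sum_over_P_prime}. For i), $\Theta_{A}(R,I\oplus J;F)=[[R*_{v}F_{I\oplus J}]]_{A}=\Theta_{A}(R,I;F_{J})$ using $F_{I\oplus J}=(F_{J})_{I}$ from \eqref{eqn:F_J_I}, and similarly for each $(R',I')$; evaluating \eqref{eqn:sum_over_P_prime} at the floor state $F_{J}$ gives the formula. For ii), since $M$ is a middle state $M*_{v}F$ is again a floor state, and \eqref{eqn:M_vprod_F_I} yields $\Theta_{A}(R*_{v}M,I;F)=[[R*_{v}(M*_{v}F)_{I}]]_{A}=\Theta_{A}(R,I;M*_{v}F)$; evaluating \eqref{eqn:sum_over_P_prime} at $M*_{v}F$ finishes it. For iii) with $I=\emptyset$, reflection about a vertical line commutes with $*_{v}$, so $\Theta_{A}(\overline{R},\emptyset;F)=[[\overline{R}*_{v}F]]_{A}=[[R*_{v}\overline{F}]]_{A^{-1}}$ by Remark~\ref{rem:prop_coef}(i); substituting $A\mapsto A^{-1}$ into \eqref{eqn:sum_over_P_prime} and evaluating at $\overline{F}$, together with the identity $\overline{F}_{I'}=\overline{F_{\overline{I'}}}$ (which holds on the size class $n_{b}(F)=n$ by the definition $\overline{I'}=\varphi_{n}(\overline{\varphi_{n}^{-1}(I')})$), produces the reflected formula. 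Each asserted relation $(R,I\oplus J)\trianglelefteq(R',I'\oplus J)$, $(R*_{v}M,I)\trianglelefteq(R'*_{v}M,I')$, and $(\overline{R},\emptyset)\trianglelefteq(\overline{R'},\overline{I'})$ is then checked against a), b), c) directly; this is bookkeeping with $n_{t}$, $n_{b}$, $|\cdot|$, and the $\oplus/\ominus$ calculus of Propositions~\ref{prop:prop_oplus} and~\ref{prop:prop_ominus}, notably $(I'\oplus J)\ominus(I\oplus J)=I'\ominus I$ from Proposition~\ref{prop:prop_ominus}(iv) for i), and $n_{t}(R*_{v}M)=n_{t}(R)$, $n_{b}(\overline{R})=n_{b}(R)$ for ii) and iii).

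I expect the crux to be condition a): one must recognize that $F_{I'}$ factors through $F_{I}$ via $I'\ominus I$, and that membership in $\mathcal{L}_{n_{b}(F_{I})}=\mathcal{L}_{n}$ is exactly the property distinguishing $\mathcal{P}'$ from $\mathcal{P}$. The support (size-class) argument handling b) and c), and the reflection reindexing $\overline{I'}$ in iii), are the remaining delicate points, whereas the $\trianglelefteq$-verifications are routine once the $\oplus/\ominus$ identities are in hand.
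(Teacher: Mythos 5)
Your proposal is correct and takes essentially the same route as the paper: the extraneous terms of $\mathcal{P}\setminus\mathcal{P}'$ are eliminated by evaluating at floor states and comparing boundary-point counts (your ``size classes''), and parts i)--iii) are obtained exactly as in the paper, by rewriting the modified $\Theta_{A}$'s as evaluations of \eqref{eqn:sum_over_P_prime} at $F_{J}$, $M *_{v} F$, and $\overline{F}$ using Lemma~\ref{lem:F_J_I} and Remark~\ref{rem:prop_coef}(i). The only cosmetic differences are that you dispose of the terms violating condition a) via the factorization $F_{I'} = (F_{I})_{I' \ominus I} = K_{0}$ (Lemma~\ref{lem:F_J_I} and Proposition~\ref{prop:prop_ominus}(iii)), whereas the paper notes instead that a) holds automatically whenever $F_{I'} \neq K_{0}$ (Proposition~\ref{prop:prop_ominus}(i)), and that the paper proves the reflection identity $(\overline{F})_{I'} = \overline{F_{\overline{I'}}}$ (including the equivalence of the two sides being $K_{0}$) in full detail where you assert it from the definition of $\overline{I'}$.
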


\begin{proof}
Given a floor state $F$ either $R *_{v} F_{I}$ is a Catalan state or $R *_{v} F_{I}$ is not a Catalan state. In the former case, for each $(R',I') \in \mathcal{P} \setminus \mathcal{P}'$ either $F_{I'} = K_{0}$ or $F_{I'}$ is a floor state. If $F_{I'} = K_{0}$, then $\Theta_{A}(R',I';F) = 0$. Otherwise, $n_{b}(F_{I'}) = n_{b}(F)-2|I'| = n+2|I|-2|I'|$ and $n_{t}(F_{I'}) = n_{t}(F) = k$. Since $I \preceq I' \in \mathcal{L}_{n+2|I|}$, $I' \ominus I \in \mathcal{L}_{n}$ by Proposition~\ref{prop:prop_ominus}(i). Therefore, it must be $n_{t}(R') \neq n+2|I|-2|I'|$ or $n_{b}(R') \neq k$, i.e., $R' *_{v} F_{I'}$ is not a Catalan state, so $\Theta_{A}(R',I';F) = 0$. Consequently,
\begin{equation*}
\Theta_{A}(R,I;F) = \sum_{(R',I') \in \mathcal{P}} Q_{R',I'}(A) \, \Theta_{A}(R',I';F) = \sum_{(R',I') \in \mathcal{P}'} Q_{R',I'}(A) \, \Theta_{A}(R',I';F).
\end{equation*}

In the latter case, for each $(R',I') \in \mathcal{P}'$ either $F_{I'}$ is a floor state with $n_{t}(F_{I'}) \neq k$ or $n_{b}(F_{I'}) \neq n+2|I|-2|I'|$, or $F_{I'} = K_{0}$. Thus, $\Theta_{A}(R',I';F) = 0$. It follows that
\begin{equation*}
\Theta_{A}(R,I;F) = 0 = \sum_{(R',I') \in \mathcal{P}'} Q_{R',I'}(A) \, \Theta_{A}(R',I';F).
\end{equation*}

For i), given $(R',I') \in \mathcal{P}'$ we see that conditions b) and c) in the definition of $\trianglelefteq$ for $(R,I \oplus J)$ and $(R',I' \oplus J)$ are clearly satisfied. Moreover, notice that $I \oplus J \preceq I' \oplus J$ and $(I' \oplus J) \ominus (I \oplus J) = I' \ominus I \in \mathcal{L}_{n}$ by Proposition~\ref{prop:prop_ominus}(iv). Thus, condition a) in the definition of $\trianglelefteq$ also holds. Therefore, $(R,I \oplus J) \trianglelefteq (R',I' \oplus J)$. Furthermore, by \eqref{eqn:Theta_A_def} and Lemma~\ref{lem:F_J_I},
\begin{equation*}
\Theta_{A}(R,I \oplus J;F) = [[R *_{v} F_{I \oplus J}]]_{A} = [[R *_{v} (F_{J})_{I}]]_{A} = \Theta_{A}(R,I;F_{J})
\end{equation*}
and
\begin{equation*}
\Theta_{A}(R,I' \oplus J;F) = [[R *_{v} F_{I' \oplus J}]]_{A} = [[R *_{v} (F_{J})_{I'}]]_{A} = \Theta_{A}(R,I';F_{J}).
\end{equation*}
Since for any $F$ and $J \in \mathrm{Fin}(\mathbb{N})$,
\begin{equation*}
\Theta_{A}(R,I;F_{J}) = \sum_{(R',I') \in \mathcal{P}'} Q_{R',I'}(A) \, \Theta_{A}(R',I';F_{J})
\end{equation*}
by \eqref{eqn:sum_over_P_prime}, it follows that
\begin{equation*}
\Theta_{A}(R,I \oplus J;F) = \sum_{(R',I') \in \mathcal{P}'} Q_{R',I'}(A) \, \Theta_{A}(R',I' \oplus J;F).
\end{equation*}

The first part of statement ii) is obvious. For the second part we see that, by \eqref{eqn:Theta_A_def} and Lemma~\ref{lem:F_J_I},
\begin{equation*}
\Theta_{A}(R,I;M *_{v} F)
= [[R *_{v} (M *_{v} F)_{I}]]_{A} 
= [[R *_{v} M *_{v} F_{I}]]_{A}
= \Theta_{A}(R *_{v} M,I;F) 
\end{equation*}
and
\begin{equation*}
\Theta_{A}(R',I';M *_{v} F)
= [[R' *_{v} (M *_{v} F)_{I'}]]_{A}
= [[R' *_{v} M *_{v} F_{I'}]]_{A}
= \Theta_{A}(R' *_{v} M,I';F).
\end{equation*}
Since for any $F$ and a middle state $M$ with $n_{t}(M) = k$,
\begin{equation*}
\Theta_{A}(R,I;M *_{v} F) = \sum_{(R',I') \in \mathcal{P}'} Q_{R',I'}(A) \, \Theta_{A}(R',I';M *_{v} F)
\end{equation*}
by \eqref{eqn:sum_over_P_prime}, it follows that
\begin{equation*}
\Theta_{A}(R *_{v} M,I;F) = \sum_{(R',I') \in \mathcal{P}'} Q_{R',I'}(A) \, \Theta_{A}(R' *_{v} M,I';F).
\end{equation*}

The first part of statement iii) is obvious since $I = \emptyset$. For the second part, we start by showing that $[[\overline{R' *_{v} F'}]]_{A} = [[\overline{R'} *_{v} F'']]_{A}$, where $F' = (\overline{F})_{I'}$ and $F'' = F_{\overline{I'}}$. Consider two cases $n_{b}(F) \neq n$ or $n_{b}(F) = n$.

In the former case, for the left-hand side of the equality, we see that either $F' = K_{0}$ or $F'$ is a floor state with $n_{b}(F') = n_{b}(\overline{F})-2|I'| \neq n-2|I'|$. Since in both situations $\overline{R' *_{v} F'}$ is not a Catalan state, $[[\overline{R' *_{v} F'}]]_{A} = 0$.
Analogously, for the right-hand side of the equality, either
$F'' = K_{0}$ or $F''$ is a floor state with $n_{b}(F'') = n_{b}(F) - 2|\overline{I'}| \neq n-2|I'|$. Thus, in both cases $\overline{R'} *_{v} F''$ is not a Catalan state and consequently, $[[\overline{R'} *_{v} F'']]_{A} = 0$.

In the latter case, if $F' \neq K_{0}$, then
$\overline{F} = F' *_{v} \varphi_{n}^{-1}(I')$ and consequently $F = \overline{F'} *_{v} \overline{\varphi_{n}^{-1}(I')} = \overline{F'} *_{v} \varphi_{n}^{-1}(\overline{I'})$. It follows that $F'' = F_{\overline{I'}} = \overline{F'}$ and therefore $\overline{R' *_{v} F'} = \overline{R'} *_{v} \overline{F'} = \overline{R'} *_{v} F''$. If $F' = K_{0}$, we show that $F'' = K_{0}$. Suppose that $F'' \neq K_{0}$, then $F = F'' *_{v} \varphi_{n}^{-1}(\overline{I'})$ and consequently $\overline{F} = \overline{F''} *_{v} \overline{\varphi_{n}^{-1}(\overline{I'})} = \overline{F''} *_{v} \varphi_{n}^{-1}(I')$. It follows that $F' = (\overline{F})_{I'} = \overline{F''} \neq K_{0}$, a contradiction. Therefore, $\overline{R' *_{v} F'} = K_{0} = \overline{R'} *_{v} F''$. 

As we see from the above, 
\begin{equation*}
[[\overline{R' *_{v} ( 
\overline{F})_{I'}}]]_{A} = [[\overline{R' *_{v} F'}]]_{A} = [[\overline{R'} *_{v} F'']]_{A} = [[\overline{R'} *_{v} F_{\overline{I'}}]]_{A}
\end{equation*}
for all $F$. Therefore, by \eqref{eqn:Theta_A_def} and Remark~\ref{rem:prop_coef}(i),
\begin{equation*}
\Theta_{A^{-1}}(R,\emptyset;\overline{F}) 
= [[R *_{v} \overline{F}]]_{A^{-1}}
= [[\overline{\overline{R} *_{v} F}]]_{A^{-1}}
= [[\overline{R} *_{v} F]]_{A}
= \Theta_{A}(\overline{R},\emptyset;F)
\end{equation*}
and 
\begin{equation*}
\Theta_{A^{-1}}(R',I';\overline{F})
= [[R' *_{v} (\overline{F})_{I'}]]_{A^{-1}}
= [[\overline{R' *_{v} ( 
\overline{F})_{I'}}]]_{A}
= [[\overline{R'} *_{v} F_{\overline{I'}}]]_{A} 
= \Theta_{A}(\overline{R'},\overline{I'};F).
\end{equation*}
Since by \eqref{eqn:sum_over_P_prime},
\begin{equation*}
\Theta_{A^{-1}}(R,\emptyset;\overline{F}) = \sum_{(R',I') \in \mathcal{P}'} Q_{R',I'}(A^{-1}) \, \Theta_{A^{-1}}(R',I';\overline{F})
\end{equation*}
for all $F$, it follows that
\begin{equation*}
\Theta_{A}(\overline{R},\emptyset;F) = \sum_{(R',I') \in \mathcal{P}'} Q_{R',I'}(A^{-1}) \, \Theta_{A}(\overline{R'},\overline{I'};F).
\end{equation*}
\end{proof}

Consider rectangle $\mathrm{R}^{2}_{m,n,2k-n}$ as shown in Figure~\ref{fig:states_intro}(a) with $m \geq 1$. We label points $y_{1}$ by $x_{0}$ and $y'_{1}$ by $x_{n+1}$ and denote by $e_{j}$ an arc that joins $x_{j}$ and $x_{j+1}$ for $0 \leq j \leq n$ (see Figure~\ref{fig:R_J}(a)).\footnote{If $n=0$ and $m \geq 1$, $e_{0}$ is an arc with ends $y_{1},y'_{1}$.} For $\mathrm{R}^{2}_{0,n,2k-n}$, we define $e_{j}$ for $1 \leq j \leq n-1$ in an analogous way. 

Given a crossingless connection $C$, we write $e_{j} \in C$ if $C$ has an arc that is regularly isotopic to $e_{j}$ and define
\begin{equation*}
\mathcal{J}(C) = \{j \mid e_{j} \in C\}.
\end{equation*}
For example, $\mathcal{J}(R) = \{0,4,6\}$ for the roof state $R$ shown in Figure~\ref{fig:R_J}(b). 

\begin{figure}[ht] 
\centering
\includegraphics[scale=1]{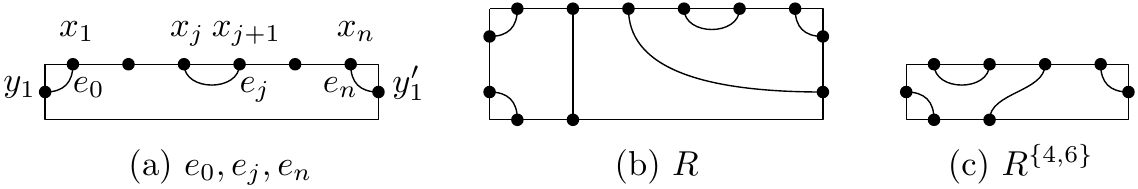}
\caption{Arcs $e_{0},e_{j},e_{n}$, roof state $R$, and $R^{J}$ for $J = \{4,6\}$}
\label{fig:R_J}
\end{figure}

Let $\mathcal{D}_{n}$ be the set of all pairs $(J,I)$ of finite subsets of $\mathbb{Z}_{\geq 0}$ such that $J = \{j_{1},j_{2},\ldots,j_{t+1}\}$ and $I = \{i_{1},i_{2},\ldots,i_{t}\}$, where $t \geq 0$ and $0 \leq j_{k} < i_{k} < j_{k+1} \leq n$ for all $k$. There is a bijection between $\mathcal{D}_{n}$ and the set $\mathcal{K}(1,n)$ of all Kauffman states of $L(1,n)$ that sends $(J,I)$ to
\begin{equation*}
s(J,I) = (\underbrace{1,1,\ldots,1}_{j_{1}},\underbrace{-1,-1,\ldots,-1}_{i_{1}-j_{1}},\underbrace{1,1,\ldots,1}_{j_{2}-i_{1}},\underbrace{-1,-1,\ldots,-1}_{i_{2}-j_{2}},\ldots,\underbrace{1,1,\ldots,1}_{j_{t+1}-i_{t}},\underbrace{-1,-1,\ldots,-1}_{n-j_{t+1}}).
\end{equation*}
Define a family of roof states
\begin{equation*}
\mathcal{R}_{n} = \{ (C_{s(J,I)})_{I} \mid (J,I) \in \mathcal{D}_{n} \}
\end{equation*}
and a map $\psi_{n}$ from $\mathcal{R}_{n}$ to the set of all subsets of $\{0,1,\ldots,n\}$ by 
\begin{equation*}
\psi_{n}(R) = \mathcal{J}(R).
\end{equation*}
Let $\mathcal{U}_{n} = \psi_{n}(\mathcal{R}_{n})$, i.e., $\mathcal{U}_{n}$ is the set of $J = \{j_{1},j_{2},\ldots,j_{t+1}\}$ such that $0 \leq j_{k} \leq n$ and $j_{k+1}-j_{k} > 1$ for all $k$.

\begin{lemma}
\label{lem:psi_injective}
$\psi_{n}$ is a bijection between $\mathcal{R}_{n}$ and $\mathcal{U}_{n}$.
\end{lemma}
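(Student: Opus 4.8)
Since $\mathcal{U}_n=\psi_n(\mathcal{R}_n)$ by definition, the map $\psi_n\colon\mathcal{R}_n\to\mathcal{U}_n$ is surjective for free, so the whole content of the lemma is \emph{injectivity}: distinct roof states in $\mathcal{R}_n$ must carry distinct sets $\mathcal{J}$. The plan is to pin down the geometry of each $C_{s(J,I)}$ so precisely that $\psi_n$ can be inverted explicitly, by writing every element of $\mathcal{R}_n$ as a canonical roof state depending only on $J$.

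First I would analyze the smoothed diagram $D_{s(J,I)}$ directly. Since $L(1,n)$ has exactly one horizontal strand meeting the $n$ vertical strands, one checks from Figure~\ref{fig:markers} that at the $c$-th crossing the top endpoint $x_c$ is routed to the right when the marker $s_c$ is positive and to the left when it is negative, while the bottom endpoint $x'_c$ is routed in the opposite direction. Tracing the single horizontal strand through the crossings with this local rule, I would show that a top cap $e_c$ joining $x_c$ and $x_{c+1}$ (with the conventions $x_0=y_1$, $x_{n+1}=y'_1$ at the two ends) appears exactly at each transition from a positive to a negative marker, that a bottom return with left end $x'_c$ appears exactly at each transition from a negative to a positive marker, and that every remaining boundary point is joined monotonically by a non-crossing through-strand. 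Reading these transitions off the block form of $s(J,I)$, the positive-to-negative transitions occur precisely after positions $j_1,\dots,j_{t+1}$ and the negative-to-positive transitions after positions $i_1,\dots,i_t$. Hence $C_{s(J,I)}$ is the standard Catalan state whose top returns are exactly $\{e_j:j\in J\}$ and whose bottom returns are exactly those indexed by $I$; in particular $\mathcal{J}(C_{s(J,I)})=J$.

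Second I would pass to the roof state $R=(C_{s(J,I)})_I$. Removing the bottom returns indexed by $I$ deletes only bottom points and relabels the survivors, so it affects no $e_j$; thus $\mathcal{J}(R)=\mathcal{J}(C_{s(J,I)})=J$, giving $\psi_n(R)=J$. Moreover, after the removal $R$ has no bottom returns and, by the previous step, its only top returns are the short arcs $\{e_j:j\in J\}$, so every other boundary point is forced to be joined to a bottom point (or to the opposite side) in the unique order-preserving, crossingless way. Consequently $R$ is determined entirely by $J$: two pairs $(J,I)$ and $(J,I')$ with the same first coordinate yield the same roof state, which I would record as the canonical roof state $R_J$. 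Injectivity then follows immediately: if $R=(C_{s(J,I)})_I$ and $R'=(C_{s(J',I')})_{I'}$ satisfy $\psi_n(R)=\psi_n(R')$, then $J=\mathcal{J}(R)=\mathcal{J}(R')=J'$ and hence $R=R_J=R_{J'}=R'$. Combined with surjectivity this gives the bijection, and the same structural description identifies $\mathcal{U}_n$ with the sets $J=\{j_1<\cdots<j_{t+1}\}\subseteq\{0,\dots,n\}$ admitting an interleaving $I$, i.e.\ those with $j_{k+1}-j_k>1$.

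The main obstacle is the first step: rigorously establishing the \emph{global} arc structure of $C_{s(J,I)}$ from the local smoothing rule, since the local rule alone does not immediately justify that the through-strands are monotone and non-crossing or that no long top returns occur. I expect to handle this either by a careful induction on $n$ — peeling off the rightmost vertical strand and tracking how the final marker block modifies the top and bottom caps and the surviving through-strands — or by formalizing the ``arrow'' bookkeeping above into the statement that each maximal run of equal markers contributes a fixed, non-crossing family of arcs. Everything after that first step (the invariance of $\mathcal{J}$ under deletion of bottom returns, the uniqueness of $R_J$, and the final two-line injectivity argument) is routine.
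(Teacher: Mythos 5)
Your proposal is correct and takes essentially the same route as the paper: surjectivity is definitional, and injectivity comes from reading off $\mathcal{J}\big((C_{s(J,I)})_{I}\big) = J$ via the positive-to-negative marker transitions (with the boundary conventions at $e_{0}$ and $e_{n}$), then observing that the resulting roof state is determined by $J$ alone. The paper's proof is simply a terser version of yours---it states the characterization of when $e_{j} \in R$ in one sentence and leaves the ``canonical roof state $R_{J}$'' step implicit in its final chain of equalities---so the extra care you take with the global arc structure and the $J$-only dependence is consistent with, not divergent from, the paper's argument.
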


\begin{proof}
The surjectivity of $\psi_{n}$ is clear, so it suffices to show that $\psi_{n}$ is injective. Suppose that $\psi_{n}(R) = \psi_{n}(R')$ for some $R = (C_{s(J,I)})_{I} \in \mathcal{R}_{n}$ and $R' = (C_{s(J',I')})_{I'} \in \mathcal{R}_{n}$. Notice that, for a fixed $j \neq 0,n$, an arc $e_{j} \in R$ if and only if the $j$-th entry of $s(J,I)$ is $1$ and the $(j+1)$-th entry of $s(J,I)$ is $-1$. Analogously, $e_{0} \in R$ if and only if the first entry of $s(J,I)$ is $-1$, and $e_{n} \in R$ if and only if the last entry of $s(J,I)$ is $1$. Thus, $\mathcal{J}(R) = J$. The same argument holds for $R'$, so $\mathcal{J}(R') = J'$. Therefore, 
\begin{equation*}
J = \mathcal{J}(R) = \psi_{n}(R) = \psi_{n}(R') = \mathcal{J}(R') = J'
\end{equation*} 
and consequently $R = (C_{s(J,I)})_{I} = (C_{s(J',I')})_{I'} = R'$.
\end{proof}

Let $C$ be a crossingless connection with $n_{t}(C) = n$ and $\mathrm{ht}(C) \geq 1$. Given $J \in \mathcal{U}_{n}$, define $C^{J}$ as the unique crossingless connection $C'$ such that 
\begin{equation*}
C = \psi_{n}^{-1}(J) *_{v} C'    
\end{equation*}
provided that $C'$ exists, and set $C^{J} = K_{0}$ otherwise. Thus, geometrically $C^{J}$ is obtained from $C$ by removing all $e_{j}$'s for $j \in J$ together with their ends and then moving left- and right-boundary points along the sides of $C$ to its top (if necessary), so that $\mathrm{ht}(C^{J}) = \mathrm{ht}(C)-1$. For example, $R^{J}$ for the roof state $R$ given in Figure~\ref{fig:R_J}(b) and $J = \{4,6\}$ is shown in Figure~\ref{fig:R_J}(c). We also let $C^{J}_{I} = (C^{J})_{I}$ to make notations shorter.

\begin{lemma}
\label{lem:1st_row_exp}
For $C \in \mathrm{Cat}(m,n)$ with $m \geq 1$, let $\mathcal{S}(C) = \{(J,I) \in \mathcal{D}_{n} \mid C^{J}_{I} \neq K_{0}\}$. Then
\begin{equation}
C(A) = \sum_{(J,I) \in \mathcal{S}(C)} A^{-n+2(\Vert{J}\Vert - \Vert{I}\Vert)} \, C^{J}_{I}(A),
\label{eqn:C(A)_new}
\end{equation}
where $\Vert{K}\Vert$ stands for the sum of (distinct) elements of $K$.
\end{lemma}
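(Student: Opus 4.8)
The plan is to prove \eqref{eqn:C(A)_new} by resolving the $n$ crossings of the top row of $L(m,n)$ and then peeling the coefficient of $C$ off one row at a time, using \eqref{eqn:C(A)} to track the monomial and closed-component contributions. Writing $L(m,n) = L(1,n) *_{v} L(m-1,n)$ and applying the Kauffman bracket relation to the $n$ crossings of the top copy of $L(1,n)$, I would index the smoothings of the first row by the bijection $(J,I) \mapsto s(J,I)$ between $\mathcal{D}_n$ and $\mathcal{K}(1,n)$. A direct count of the markers of $s(J,I)$ gives $p(s(J,I)) - n(s(J,I)) = 2(\Vert J\Vert - \Vert I\Vert) - n$, matching the exponent in \eqref{eqn:C(A)_new}; and since a single row cannot close up any component, the smoothed first row is precisely the crossingless connection $C_{s(J,I)} = \psi_n^{-1}(J) *_{v} \varphi_n^{-1}(I)$, which is forced by the definitions of $\mathcal{R}_n$, $\psi_n$, and of $(\cdot)_I$. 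This yields
\begin{equation*}
L(m,n) = \sum_{(J,I) \in \mathcal{D}_n} A^{-n + 2(\Vert J\Vert - \Vert I\Vert)} \, \psi_n^{-1}(J) *_{v} \bigl( \varphi_n^{-1}(I) *_{v} L(m-1,n) \bigr).
\end{equation*}

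The step I expect to be the main obstacle is the \emph{capping identity}
\begin{equation*}
\varphi_n^{-1}(I) *_{v} L(m-1,n) = L(m-1,n-2|I|) *_{v} \varphi_n^{-1}(I).
\end{equation*}
Here $\varphi_n^{-1}(I)$ caps off $|I|$ pairs of adjacent vertical strands at the top of $L(m-1,n)$, and I would slide each such cap downward past the $m-1$ horizontal lines by Reidemeister~II moves — the vertical strands lie over the horizontal ones, so each cap can be pulled through — which removes the two capped vertical strands from the lattice and leaves the cap behind as a bottom return. Iterating over all caps transforms $L(m-1,n)$ into $L(m-1,n-2|I|)$ with the return pattern $\varphi_n^{-1}(I)$ reattached below. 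The delicate point is to verify that each cap joins two strands ending at distinct bottom-boundary points, so that no closed component is ever produced and no extra factor of $(-A^2-A^{-2})$ intrudes; this is exactly what keeps the power of $A$ in \eqref{eqn:C(A)_new} clean.

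Granting these two identities, the coefficient extraction becomes formal. Taking the coefficient of $C$ on both sides and fixing $(J,I)$, I would use that $\psi_n^{-1}(J)$ is a roof state stacked on top without creating any closed component, so the roof cancellation recalled in Section~\ref{s:pre} identifies the coefficient of $C$ in $\psi_n^{-1}(J) *_{v} X$ with the coefficient of $C^J$ in $X$, where $C = \psi_n^{-1}(J) *_{v} C^J$. Taking $X = L(m-1,n-2|I|) *_{v} \varphi_n^{-1}(I)$ and expanding $L(m-1,n-2|I|)$ in the Catalan basis, every term $D *_{v} \varphi_n^{-1}(I)$ is again loop-free, and floor cancellation turns $D *_{v} \varphi_n^{-1}(I) = C^J$ into $D = (C^J)_I = C^J_I$; hence this coefficient equals $C^J_I(A)$. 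Finally, a pair $(J,I)$ fails to contribute exactly when $C^J = K_0$ or $(C^J)_I = K_0$, that is, when $C^J_I = K_0$, so the summation range collapses to $\mathcal{S}(C)$ and \eqref{eqn:C(A)_new} follows.
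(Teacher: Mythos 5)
Your proposal is correct and follows essentially the same route as the paper's proof: expand the first row via the bijection $\mathcal{D}_n \leftrightarrow \mathcal{K}(1,n)$ with the marker count $p(s)-n(s) = 2(\Vert J\Vert - \Vert I\Vert)-n$, slide the caps $\varphi_n^{-1}(I)$ below $L(m-1,n)$ by regular isotopy, expand $L(m-1,n-2|I|)$ in the Catalan basis, and collect coefficients using uniqueness of the decomposition $C = \psi_n^{-1}(J) *_v C^J_I *_v \varphi_n^{-1}(I)$. Your added details (the Reidemeister~II justification of the capping identity and the check that no closed components arise) are points the paper compresses into the phrase ``regularly isotopic,'' so the two arguments coincide in substance.
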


\begin{proof}
Expanding the first row of $L(m,n)$ yields
\begin{equation*}
L(m,n) = \sum_{s \in \mathcal{K}(1,n)} A^{p(s)-n(s)} \cdot C_{s} *_{v} L(m-1,n).
\end{equation*}
Since there is a bijection between $s \in \mathcal{K}(1,n)$ and $(J,I) \in \mathcal{D}_{n}$ and $\mathcal{D}_{n} \subset \mathcal{U}_{n} \times \mathcal{L}_{n}$,
\begin{equation*}
L(m,n) = \sum_{(J,I) \in \mathcal{D}_{n}} A^{-n+2(\Vert{J}\Vert - \Vert{I}\Vert)} \cdot \psi_{n}^{-1}(J) *_{v} \varphi_{n}^{-1}(I) *_{v} L(m-1,n).
\end{equation*}
Notice that diagrams $\psi_{n}^{-1}(J) *_{v} \varphi_{n}^{-1}(I) *_{v} L(m-1,n)$ and $\psi_{n}^{-1}(J) *_{v} L(m-1,n-2|I|) *_{v} \varphi_{n}^{-1}(I)$ are regularly isotopic, hence
\begin{equation*}
L(m,n) = \sum_{(J,I) \in \mathcal{D}_{n}} A^{-n+2(\Vert{J}\Vert - \Vert{I}\Vert)} \cdot \psi_{n}^{-1}(J) *_{v} L(m-1,n-2|I|) *_{v} \varphi_{n}^{-1}(I).
\end{equation*}
Furthermore, using \eqref{eqn:L_mn} for $L(m-1,n-2|I|)$ yields
\begin{equation*}
L(m,n) = \sum_{(J,I) \in \mathcal{D}_{n}} \, \sum_{C' \in \mathrm{Cat}(m-1,n-2|I|)} A^{-n+2(\Vert{J}\Vert - \Vert{I}\Vert)} \, C'(A) \cdot \psi_{n}^{-1}(J) *_{v} C' *_{v} \varphi_{n}^{-1}(I).
\end{equation*}
Notice that, if for $(J,I) \in \mathcal{D}_{n}$ and $C' \in \mathrm{Cat}(m-1,n-2|I|)$ we let $C = \psi_{n}^{-1}(J) *_{v} C' *_{v} \varphi_{n}^{-1}(I)$, then $C \in \mathrm{Cat}(m,n)$ and $(J,I) \in \mathcal{S}(C)$. Conversely, for $C \in \mathrm{Cat}(m,n)$ and $(J,I) \in \mathcal{S}(C)$, if we let $C' = C^{J}_{I}$, then $(J,I) \in \mathcal{D}_{n}$ and $C' \in \mathrm{Cat}(m-1,n-2|I|)$. Therefore
\begin{eqnarray*}
L(m,n) &=& \sum_{C \in \mathrm{Cat}(m,n)} \, \sum_{(J,I) \in \mathcal{S}(C)} A^{-n+2(\Vert{J}\Vert - \Vert{I}\Vert)} \, C^{J}_{I}(A) \cdot \psi_{n}^{-1}(J) *_{v} C^{J}_{I} *_{v} \varphi_{n}^{-1}(I) \\
&=& \sum_{C \in \mathrm{Cat}(m,n)} \Big( \sum_{(J,I) \in \mathcal{S}(C)} A^{-n+2(\Vert{J}\Vert - \Vert{I}\Vert)} \, C^{J}_{I}(A) \Big) \, C.
\end{eqnarray*}
It follows that $C(A)$ is given by \eqref{eqn:C(A)_new}.
\end{proof}

\begin{corollary} 
\label{cor:non_negative_coef}
Coefficients of $C(A)$ are non-negative integers for any Catalan state $C$.
\end{corollary}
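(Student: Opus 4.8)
The plan is to induct on the number of rows $m$, exploiting the fact that the first-row expansion \eqref{eqn:C(A)_new} has terms carrying manifestly positive (indeed unit) monomial coefficients. This is in contrast to the defining formula \eqref{eqn:C(A)}, whose factor $(-A^{2}-A^{-2})^{|D_{s}|}$ permits cancellation and so does not by itself reveal non-negativity; the whole point of the expansion is precisely to eliminate such cancellation.

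For the base case I would take $m=0$. Then $L(0,n)$ consists of $n$ parallel vertical strands and has no crossings, so it is already a single Catalan state. Hence in \eqref{eqn:L_mn} exactly one coefficient equals $1$ and all the others vanish, and in either situation $C(A)$ has non-negative integer coefficients.

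For the inductive step, I would fix $m\geq 1$, assume the claim for every Catalan state in $\mathrm{Cat}(m-1,n')$ (for all $n'$), and apply Lemma~\ref{lem:1st_row_exp} to a given $C\in\mathrm{Cat}(m,n)$:
\begin{equation*}
C(A) = \sum_{(J,I)\in\mathcal{S}(C)} A^{-n+2(\Vert{J}\Vert-\Vert{I}\Vert)}\,C^{J}_{I}(A).
\end{equation*}
As recorded in the proof of that lemma, each $C^{J}_{I}$ lies in $\mathrm{Cat}(m-1,n-2|I|)$, so by the induction hypothesis each $C^{J}_{I}(A)$ has non-negative integer coefficients. Each is then multiplied by the single Laurent monomial $A^{-n+2(\Vert{J}\Vert-\Vert{I}\Vert)}$, whose coefficient is $+1$, and the finitely many resulting polynomials are summed. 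Since both multiplication by a unit monomial and addition preserve the class of Laurent polynomials with non-negative integer coefficients, $C(A)$ belongs to this class as well.

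I do not expect a genuine obstacle here: all the substantive work has already been absorbed into establishing the first-row expansion. The only items requiring attention are the base case, where there are no crossings to smooth, and the observation that every coefficient standing in front of $C^{J}_{I}(A)$ is a monomial with coefficient $1$, so that no cancellation can spoil non-negativity.
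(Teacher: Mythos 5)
Your proof is correct and is essentially the paper's own argument: the paper proves this corollary by induction on $\mathrm{ht}(C)$ (which equals $m$ for $C \in \mathrm{Cat}(m,n)$) using the first-row expansion \eqref{eqn:C(A)_new}, exactly as you do. Your write-up simply makes explicit the base case and the observation that the expansion's coefficients are unit monomials, details the paper leaves to the reader.
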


\begin{proof}
The statement follows from \eqref{eqn:C(A)_new} by induction on $\mathrm{ht}(C)$.
\end{proof}

Given a roof state $R$ with $n_{t}(R) = n$ and $\mathrm{ht}(R) \geq 1$, define
\begin{equation*}
\mathcal{H}(R) = \{ (J,I) \in \mathcal{D}_{n} \mid J \subseteq \mathcal{J}(R) \}.
\end{equation*}

\begin{proposition} 
\label{prop:1st_row_exp}
Let $(R,\tilde{I}) \in \mathcal{W}$ with $n_{t}(R) = n$ and $\mathrm{ht}(R) \geq 1$. Then $(R,\tilde{I}) \trianglelefteq (R^{J},I \oplus \tilde{I})$ for all $(J,I) \in \mathcal{H}(R)$ and
\begin{equation}
\Theta_{A}(R,\tilde{I};\cdot) = \sum_{(J,I) \in \mathcal{H}(R)} A^{-n+2(\Vert{J}\Vert - \Vert{I}\Vert )} \, \Theta_{A}(R^{J},I \oplus \tilde{I};\cdot).
\label{eqn:1st_row_exp}
\end{equation}
\end{proposition}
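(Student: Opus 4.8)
The plan is to prove the asserted order relation first and then establish \eqref{eqn:1st_row_exp} as an identity of functions of the floor-state argument $F$, reducing it to a ``relativized'' version of the first-row expansion of Lemma~\ref{lem:1st_row_exp}. Throughout write $n = n_{t}(R)$ and $k = n_{b}(R)$, and recall that for $(J,I) \in \mathcal{D}_{n}$ one has $|J| = |I|+1$, that $J \in \mathcal{U}_{n}$, and that $I \in \mathcal{L}_{n}$ (the last because $\varphi_{n}^{-1}(I)$ is the bottom-return part of $C_{s(J,I)} = \psi_{n}^{-1}(J) *_{v} \varphi_{n}^{-1}(I)$, exactly as in the proof of Lemma~\ref{lem:1st_row_exp}). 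The one geometric ingredient I need is that for $J \subseteq \mathcal{J}(R)$ with $J \in \mathcal{U}_{n}$ and $\mathrm{ht}(R) \geq 1$ the splitting $R = \psi_{n}^{-1}(J) *_{v} R^{J}$ is defined, so that $R^{J} \neq K_{0}$ is a genuine roof state, with $\mathrm{ht}(R^{J}) = \mathrm{ht}(R)-1$, $n_{b}(R^{J}) = k$, and $n_{t}(R^{J}) = n_{b}(\psi_{n}^{-1}(J)) = n - 2|I|$; this is precisely what makes $(R^{J},I \oplus \tilde{I}) \in \mathcal{W}$, so that $\trianglelefteq$ is even meaningful. Verifying this well-definedness of $R^{J}$ together with its boundary counts is the main obstacle, since everything else is bookkeeping built on it.

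Granting this, the relation $(R,\tilde{I}) \trianglelefteq (R^{J},I \oplus \tilde{I})$ is checked directly against its definition. Condition c) is immediate from $n_{b}(R^{J}) = k = n_{b}(R)$. For b) I compute, using $|I \oplus \tilde{I}| = |I|+|\tilde{I}|$, that
\begin{equation*}
n_{t}(R^{J}) + 2|I \oplus \tilde{I}| = (n-2|I|) + 2(|I|+|\tilde{I}|) = n + 2|\tilde{I}| = n_{t}(R) + 2|\tilde{I}|.
\end{equation*}
For a) I apply Proposition~\ref{prop:prop_ominus}(ii) with $K = I$ to get $(I \oplus \tilde{I}) \ominus \tilde{I} = I$, and since $I \in \mathcal{L}_{n} = \mathcal{L}_{n_{t}(R)}$ as noted above, condition a) holds.

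For the formula I evaluate both sides at an arbitrary floor state $F$ (the case $F = K_{0}$ being trivial). Writing $G = F_{\tilde{I}}$, the left side is $[[R *_{v} G]]_{A}$ by \eqref{eqn:Theta_A_def}, while for each summand on the right \eqref{eqn:Theta_A_def} together with \eqref{eqn:F_J_I} gives $\Theta_{A}(R^{J},I \oplus \tilde{I};F) = [[R^{J} *_{v} F_{I \oplus \tilde{I}}]]_{A} = [[R^{J} *_{v} (F_{\tilde{I}})_{I}]]_{A} = [[R^{J} *_{v} G_{I}]]_{A}$. So the claim reduces to proving, for every floor state $G$,
\begin{equation*}
[[R *_{v} G]]_{A} = \sum_{(J,I) \in \mathcal{H}(R)} A^{-n+2(\Vert{J}\Vert - \Vert{I}\Vert)} \, [[R^{J} *_{v} G_{I}]]_{A}. \tag{$\star$}
\end{equation*}

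To prove $(\star)$ I split on whether $C := R *_{v} G$ is a Catalan state. If it is, I apply Lemma~\ref{lem:1st_row_exp} to $C$. The core identity is $C^{J}_{I} = R^{J} *_{v} G_{I}$ for every $(J,I) \in \mathcal{H}(R)$: since $\mathrm{ht}(R) \geq 1$ the first row of $C$ is that of $R$, so $R = \psi_{n}^{-1}(J) *_{v} R^{J}$ forces $C = \psi_{n}^{-1}(J) *_{v} (R^{J} *_{v} G)$ with $R^{J} *_{v} G \neq K_{0}$, and the roof-state cancellation property from Section~\ref{s:pre} gives $C^{J} = R^{J} *_{v} G$, whence $C^{J}_{I} = (R^{J} *_{v} G)_{I} = R^{J} *_{v} G_{I}$ by \eqref{eqn:M_vprod_F_I}. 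This shows at once that $\mathcal{S}(C) \subseteq \mathcal{H}(R)$ (if $C^{J}_{I} \neq K_{0}$ then $C^{J} \neq K_{0}$, forcing $J \subseteq \mathcal{J}(R)$) and that every term of $(\star)$ indexed by $\mathcal{H}(R) \setminus \mathcal{S}(C)$ vanishes (there $C^{J}_{I} = R^{J} *_{v} G_{I} = K_{0}$), so the right side of $(\star)$ collapses to the right side of Lemma~\ref{lem:1st_row_exp}, equal to $C(A) = [[R *_{v} G]]_{A}$. If $C$ is not a Catalan state then $[[R *_{v} G]]_{A} = 0$, and I show each summand vanishes by a dimension count mirroring conditions b) and c): $n_{b}(R^{J}) = k$, $n_{t}(R^{J}) = n-2|I|$, while $n_{t}(G_{I}) = n_{t}(G)$ and $n_{b}(G_{I}) = n_{b}(G)-2|I|$, so $R^{J} *_{v} G_{I}$ is either $K_{0}$ (when $k \neq n_{t}(G)$) or has unequal top and bottom counts (when $k = n_{t}(G)$ but $n \neq n_{b}(G)$), giving $[[R^{J} *_{v} G_{I}]]_{A} = 0$ in both cases. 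Substituting $G = F_{\tilde{I}}$ back into $(\star)$ then yields \eqref{eqn:1st_row_exp}.
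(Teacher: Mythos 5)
Your proposal is correct and takes essentially the same route as the paper's proof: both establish the $\trianglelefteq$ conditions by the same counting, and both derive \eqref{eqn:1st_row_exp} from Lemma~\ref{lem:1st_row_exp} via the identity $C^{J}_{I} = R^{J} *_{v} F_{I}$ (justified by the splitting $R = \psi_{n}^{-1}(J) *_{v} R^{J}$ and cancellation), with the same case split on whether $R *_{v} F$ is a Catalan state. The only cosmetic difference is that you treat a general $\tilde{I}$ directly by inlining Lemma~\ref{lem:F_J_I} and Proposition~\ref{prop:prop_ominus}(ii), whereas the paper cites Proposition~\ref{prop:P_prime_concat_reflect_formula}(i) to reduce at the outset to $\tilde{I} = \emptyset$; the underlying machinery is identical.
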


We will refer to the formula \eqref{eqn:1st_row_exp} as the \emph{first-row expansion} of $(R,\tilde{I})$.

\begin{proof}
By Proposition~\ref{prop:P_prime_concat_reflect_formula}(i), it suffices to show that the above statement holds for $\tilde{I} = \emptyset$. 

Given $(J,I) \in \mathcal{H}(R)$, clearly $I \ominus \emptyset = I \in \mathcal{L}_{n}$ and $n_{b}(R^{J}) = n_{b}(R)$. Thus, conditions a) and c) in definition of $\trianglelefteq$ are satisfied. Moreover, since $(J,I) \in \mathcal{D}_{n}$, we see that $|J| = |I|+1$. Therefore,
\begin{equation*}
n_{t}(R^{J})+2|I| = n_{t}(R)+2-2|J|+2|I| = n_{t}(R),
\end{equation*}
so condition b) also holds. Hence, $(R,\emptyset) \trianglelefteq (R^{J},I)$ for all $(J,I) \in \mathcal{H}(R)$. 

Assume that $F$ is a floor state such that $C = R *_{v} F$ is a Catalan state. Since $C^{J}_{I} = R^{J} *_{v} F_{I}$ and $[[R^{J} *_{v} F_{I}]]_{A} = 0$ if $(J,I) \in \mathcal{H}(R) \setminus \mathcal{S}(C)$, where $\mathcal{S}(C)$ is defined in the statement of Lemma~\ref{lem:1st_row_exp}, by \eqref{eqn:C(A)_new}
\begin{equation*}
[[R *_{v} F]]_{A} = C(A) = \sum_{(J,I) \in \mathcal{H}(R)} A^{-n+2(\Vert{J}\Vert - \Vert{I}\Vert)} \, [[R^{J} *_{v} F_{I}]]_{A}.
\end{equation*}
If $R *_{v} F$ is not a Catalan state, as one may verify, $R^{J} *_{v} F_{I}$ is also not a Catalan state for any $(J,I) \in \mathcal{H}(R)$, hence
\begin{equation*}
[[R *_{v} F]]_{A} = 0 = \sum_{(J,I) \in \mathcal{H}(R)} A^{-n+2(\Vert{J}\Vert - \Vert{I}\Vert)} \, [[R^{J} *_{v} F_{I}]]_{A}. 
\end{equation*}
Consequently,
\begin{equation*}
\Theta_{A}(R,\emptyset;F) = \sum_{(J,I) \in \mathcal{H}(R)} A^{-n+2(\Vert{J}\Vert - \Vert{I}\Vert)} \, \Theta_{A}(R^{J},I;F)
\end{equation*}
for any floor state $F$.
\end{proof}

\begin{remark}
\label{rem:minimal_refinement}
Let $\mathcal{P} = \{(R^{J}, I \oplus \tilde{I}) \mid (J,I) \in \mathcal{H}(R)\}$, then \eqref{eqn:1st_row_exp} can be written as
\begin{equation}
\Theta_{A}(R,\tilde{I};\cdot) = \sum_{(R',I') \in \mathcal{P}} Z_{R',I'}(A) \, \Theta_{A}(R',I';\cdot),
\label{eqn:1st_row_exp_over_P}
\end{equation}
where 
\begin{equation*}
Z_{R',I'}(A) = \sum_{(J,I) \in \mathcal{A}_{R',I'}} A^{-n+2(\Vert{J}\Vert-\Vert{I}\Vert)}
\end{equation*}
and $\mathcal{A}_{R',I'} = \{(J,I) \in \mathcal{H}(R) \mid (R^{J},I \oplus \tilde{I}) = (R',I')\}$. We will also be referring to \eqref{eqn:1st_row_exp_over_P} as the first-row expansion of $(R,\tilde{I})$. 
\end{remark}

\begin{theorem} 
\label{thm:realizable_coef_non_zero}
For any Catalan state $C$, $C(A) \neq 0$ if and only if $C$ is realizable.
\end{theorem}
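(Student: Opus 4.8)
The plan is to prove both implications of the biconditional, treating the easy direction first and reserving the bulk of the effort for realizability.

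For the forward direction ($C(A) \neq 0 \Rightarrow C$ realizable), I would argue by contraposition. If $C$ is not realizable, then by the very definition given in Section~\ref{s:pre}, the set $\mathcal{K}(C)$ of Kauffman states $s$ with $C_{s} = C$ is empty. Formula \eqref{eqn:C(A)} expresses $C(A)$ as a sum over exactly this set, so an empty index set forces $C(A) = 0$. This direction is immediate and requires no machinery beyond the definitions.

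The reverse direction ($C$ realizable $\Rightarrow C(A) \neq 0$) is the substantive one, and here I would exploit the non-negativity established in Corollary~\ref{cor:non_negative_coef} together with the first-row expansion of Lemma~\ref{lem:1st_row_exp}. The key observation is that once we know every coefficient of $C(A)$ is a non-negative integer, proving $C(A) \neq 0$ reduces to exhibiting a single monomial with strictly positive coefficient; there can be no cancellation to worry about. I would therefore induct on $\mathrm{ht}(C)$. In the base case $\mathrm{ht}(C) = 0$, $C$ is a Catalan state built from top and bottom states with no left boundary points, whose coefficient can be computed directly (or via Theorem~\ref{thm:coef_no_bot_rtn} after splitting) and seen to be a nonzero power of $A$. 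For the inductive step, I would apply formula \eqref{eqn:C(A)_new}, which writes
\begin{equation*}
C(A) = \sum_{(J,I) \in \mathcal{S}(C)} A^{-n+2(\Vert J\Vert - \Vert I\Vert)} \, C^{J}_{I}(A),
\end{equation*}
where each $C^{J}_{I}$ has height $\mathrm{ht}(C)-1$. Since $C$ is realizable, I would show $\mathcal{S}(C)$ is nonempty and, crucially, that it contains at least one pair $(J,I)$ for which $C^{J}_{I}$ is itself realizable (so that the inductive hypothesis gives $C^{J}_{I}(A) \neq 0$). By Corollary~\ref{cor:non_negative_coef} every summand $A^{-n+2(\Vert J\Vert-\Vert I\Vert)} C^{J}_{I}(A)$ has non-negative coefficients, so no term can cancel another, and the presence of one nonzero summand forces $C(A) \neq 0$.

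\emph{The main obstacle} I anticipate is the inductive step's realizability claim: verifying that realizability of $C$ guarantees the existence of $(J,I) \in \mathcal{S}(C)$ with $C^{J}_{I}$ realizable. Geometrically, realizability of $C$ means (by Theorem~\ref{thm:vh_line_condi}) that $C$ arises from smoothing some Kauffman state $s \in \mathcal{K}(m,n)$; the first row of $s$ determines a pair $(J,I) \in \mathcal{D}_{n}$ via the bijection with $\mathcal{K}(1,n)$, and the remaining $m-1$ rows realize $C^{J}_{I}$ in $L(m-1,n-2|I|)$. The careful point is confirming that this restriction indeed lands in $\mathcal{S}(C)$ (i.e., $C^{J}_{I} \neq K_{0}$) and that the smoothing of the lower rows produces precisely $C^{J}_{I}$ as a crossingless connection, matching the combinatorial definition of $C^{J}_{I}$ from the first-row expansion. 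Once this correspondence between the top-row smoothing and the algebraic operation $(\cdot)^{J}_{I}$ is pinned down, the induction closes cleanly.
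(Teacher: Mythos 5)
Your forward implication is exactly the paper's: an empty $\mathcal{K}(C)$ in \eqref{eqn:C(A)} forces $C(A)=0$. Your reverse implication also follows the paper's general outline --- induction on height, the first-row expansion \eqref{eqn:C(A)_new}, and Corollary~\ref{cor:non_negative_coef} to rule out cancellation --- but the step you yourself flag as the main obstacle is a genuine gap, and it fails for a concrete reason: closed components. If $C=C_{s}$ and $(J,I)\in\mathcal{D}_{n}$ encodes the first row $s_{1}$ of $s$, it is \emph{not} true in general that $(J,I)\in\mathcal{S}(C)$. The bottom returns of $C_{s_{1}}$ indexed by $I$ lie at the interface between row $1$ and the lower rows; if such a return meets a matching top return of the smoothing of the lower rows, the two arcs close up into a circle, which is deleted in passing from $D_{s}$ to $C_{s}$, and $C_{s}$ then has no bottom returns in the positions demanded by $I$. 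Concretely, in $L(2,2)$ take $s$ whose first row corresponds to $(J,I)=(\{0,2\},\{1\})$ (caps $e_{0},e_{2}$ and one cup) and whose second row corresponds to $(\{1\},\emptyset)$ (cap $e_{1}$ and two through strands). The cup of row $1$ closes against the cap of row $2$; $C_{s}$ consists of the four arcs $y_{1}\frown x_{1}$, $x_{2}\frown y'_{1}$, $y_{2}\frown x'_{1}$, $y'_{2}\frown x'_{2}$, so $C_{s}^{\{0,2\}}$ has no bottom returns at all, $C^{\{0,2\}}_{\{1\}}=K_{0}$, and hence $(\{0,2\},\{1\})\notin\mathcal{S}(C_{s})$. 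Relatedly, the ``remaining $m-1$ rows'' of $s$ form a Kauffman state of $L(m-1,n)$, not of $L(m-1,n-2|I|)$, so they cannot literally realize $C^{J}_{I}$; one would have to discard the columns covered by the cups, and whether that is legitimate depends on how those crossings were smoothed --- the same difficulty in another guise. Proving that \emph{some} realizing state has a first row that works is essentially as hard as the theorem itself.

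The paper closes the induction by a different device that avoids cups entirely. Via Theorem~\ref{thm:vh_line_condi} it replaces realizability by the line conditions; for $m\geq 2$ it first reduces, by splitting along vertical lines (Theorem~\ref{thm:split_cat} and Remark~\ref{rem:hprod}), to the case $\#(C\cap l^{v}_{j})\leq m-2$ for all $j$; then it chooses any $j'\in\mathcal{J}(C)$ (nonempty, since otherwise $\#(C\cap l^{h}_{1})=n+2$) and uses the single summand $(J,I)=(\{j'\},\emptyset)$ of \eqref{eqn:C(A)_new}. For this choice $C'=C^{\{j'\}}_{\emptyset}\neq K_{0}$ automatically, $C'$ meets horizontal lines exactly as $C$ does, and it meets each vertical line at most once more than $C$, hence at most $m-1$ times; Theorem~\ref{thm:vh_line_condi} then certifies that $C'$ is realizable, the induction hypothesis gives $C'(A)\neq 0$, and non-negativity finishes the proof just as you intend. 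Note that your plan has no analogue of the vertical-splitting reduction, which is precisely what makes the cup-free choice $(\{j'\},\emptyset)$ viable; if you repair your argument, it will most likely be by switching to this line-condition formulation rather than by tracking Kauffman states.
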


\begin{proof}
Let $C \in \mathrm{Cat}(m,n)$. If $C(A) \neq 0$ then by \eqref{eqn:C(A)} the set $\mathcal{K}(C)$ is nonempty, so $C$ is realizable. We show that if $C$ is realizable then $C(A) \neq 0$. By Theorem~\ref{thm:vh_line_condi}, it suffices to show that, if $C$ satisfies conditions $\#(C \cap l^{h}_{i}) \leq n$ for $i = 1,2,\ldots,m-1$ and $\#(C \cap l^{v}_{j}) \leq m$ for $j = 1,2,\ldots,n-1$, then $C(A) \neq 0$.

We prove the above statement by induction on $m$. Indeed, when $m = 0$ then clearly $C(A) = 1$. In the case $m = 1$, $\#(C \cap l^{v}_{j}) = 1$ for $j = 1,2,\ldots,n-1$, by Remark~\ref{rem:hprod}, $C$ is a horizontal product of Catalan states $C_{i} \in \mathrm{Cat}(1,1)$ and therefore $C(A) = \prod_{i=1}^{n} C_{i}(A) = A^{k}$ for some $k$. Thus $C(A)\neq 0$ in both cases. 

Let $m \geq 2$, then using Remark~\ref{rem:hprod} we may assume that $\#(C \cap l^{v}_{j}) \leq m-2$ for $j = 1,2,\ldots,n-1$. If $|\mathcal{J}(C)| = 0$ then $\#(C \cap l^{h}_{1}) = n+2$ contradicting our assumption, so $|\mathcal{J}(C)| \geq 1$. Assume that $j' \in \mathcal{J}(C)$ and let $C' = C^{J}_{I}$ for $(J,I) = (\{j'\},\emptyset)$. Since $C'$ and $C$ intersect horizontal lines the same number of times and $\#(C' \cap l^{v}_{j}) \leq \#(C \cap l^{v}_{j})+1 \leq m-1$ for $j = 1,2,\ldots,n-1$, by the induction hypothesis, $C'(A) \neq 0$. According to Lemma~\ref{lem:1st_row_exp}, 
\begin{equation*}
C(A) = \sum_{(J,I) \in \mathcal{S}(C)} A^{-n+2(\Vert{J}\Vert - \Vert{I}\Vert)} \, C^{J}_{I}(A)
\end{equation*}
and, by Corollary~\ref{cor:non_negative_coef}, coefficients of $C^{J}_{I}(A)$ are non-negative. Since $C'(A) \neq 0$ appears in the above sum, it follows that $C(A) \neq 0$.
\end{proof}

Recall, for a Catalan state $C$ with no bottom returns, $Q_{q}(T(C),v_{0},\alpha)$ denotes its plucking polynomial of the plane rooted tree.

\begin{proposition}
\label{prop:non_zero_plucking}
A Catalan state $C$ with no bottom returns is realizable if and only if $Q_{q}(T(C),v_{0},\alpha) \neq 0$.
\end{proposition}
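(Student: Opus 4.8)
The plan is to prove the two implications separately, using Theorem~\ref{thm:realizable_coef_non_zero} as the bridge between realizability and the nonvanishing of $C(A)$, and to detect the vanishing of the plucking polynomial directly from its recursive definition.

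For the forward implication, suppose $C$ is realizable. By Theorem~\ref{thm:realizable_coef_non_zero}, $C(A) \neq 0$, and since $C$ has no bottom returns, Theorem~\ref{thm:coef_no_bot_rtn} applies and gives $C(A) = A^{2\beta(C)-mn} Q_{A^{-4}}^{*}(T,v_{0},\alpha)$. The factor $A^{2\beta(C)-mn}$ is a unit in $\mathbb{Q}(A)$, and by definition $Q_{q}^{*}$ is $Q_{q}$ multiplied by a power of $q$; hence $C(A) \neq 0$ forces $Q_{A^{-4}}(T,v_{0},\alpha) \neq 0$, and therefore $Q_{q}(T(C),v_{0},\alpha) \neq 0$. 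I would stress that this direction genuinely relies on the formula of Theorem~\ref{thm:coef_no_bot_rtn}, which is only available once realizability is known, so the converse cannot simply be read off by reversing this argument.

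For the converse I would argue the contrapositive: assuming $C$ is \emph{not} realizable, I want $Q_{q}(T(C),v_{0},\alpha) = 0$. The key preliminary observation is that, directly from Definition~\ref{def:q_poly}, $Q_{q}$ is a sum of monomials with non-negative integer coefficients (the base value is $1$, and the recursive step sums products $q^{r(T,v_{0},v)} Q_{q}(T-v,v_{0},\alpha_{v})$ of non-negative-coefficient polynomials). Consequently $Q_{q}(T,v_{0},\alpha) \neq 0$ if and only if $Q_{1}(T,v_{0},\alpha) \neq 0$, i.e.\ if and only if there exists at least one \emph{admissible plucking order}: a removal of leaves, each lying in $L_{1}(T)$ at the moment it is deleted, that reduces $T$ to the root $v_{0}$. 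An empty $L_{1}$ at some reachable stage with edges still present kills that branch, so $Q_{q}=0$ exactly when every attempted plucking order deadlocks. It therefore suffices to prove that some admissible plucking order exists precisely when $\mathfrak{b}(C) \neq \emptyset$, matching each plucking step (the deletion of a leaf whose delay has fallen to $1$) with the choice of an entry $b_{i}$ in a realizing row $r_{i}$, so that an admissible order produces a sequence $\mathbf{b}\in\mathfrak{b}(C)$ and conversely.

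The main obstacle is establishing this last correspondence \emph{without} assuming realizability, since the weighted bijection underlying Theorem~\ref{thm:coef_no_bot_rtn} is stated only for realizable $C$. Concretely, I expect to show that the plucking process deadlocks exactly when one of the crossing inequalities of Theorem~\ref{thm:vh_line_condi} fails: a left or right return whose lower end lies in row $k$ contributes a leaf of delay $k$, and an overabundance of such high-delay return-leaves is the tree-theoretic shadow of an excess $\#(C \cap l^{v}_{j}) > m$, which prevents every leaf from reaching delay $1$ before the supply of pluckable leaves is exhausted. Making this translation between the delay statistic on $T(C)$ and the line-crossing numbers of $C$ quantitative and uniform over all non-realizable $C$ with no bottom returns is the delicate point; once it is in place, non-realizability forces $L_{1}=\emptyset$ at some stage, hence $Q_{q}(T(C),v_{0},\alpha)=0$, completing the contrapositive and the proof.
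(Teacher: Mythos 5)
Your forward implication is correct and is exactly the paper's: realizability gives $C(A) \neq 0$ by Theorem~\ref{thm:realizable_coef_non_zero}, and Theorem~\ref{thm:coef_no_bot_rtn} then forces $Q_{q}(T(C),v_{0},\alpha) \neq 0$. Your reduction of the converse to the existence of an admissible plucking order (via non-negativity of the coefficients of $Q_{q}$) is also sound. The gap is everything after that, and you name it yourself: you never establish that non-realizability forces every plucking order to deadlock. Worse, the mechanism you sketch for filling the gap points at the wrong realizability condition. You predict that a deadlock is the "tree-theoretic shadow" of a vertical-line violation $\#(C \cap l^{v}_{j}) > m$; but for a Catalan state with no bottom returns such a violation is impossible (Lemma~2.2 of \cite{DLP2015}, invoked at the start of the paper's proof), so by Theorem~\ref{thm:vh_line_condi} non-realizability of such a $C$ is equivalent to a \emph{horizontal}-line violation $\#(C \cap l^{h}_{i}) > n$ for some $i$. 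Any correct completion of your argument must therefore tie the deadlock to the horizontal condition, not the vertical one. Your alternative idea of matching plucking orders with sequences $\mathbf{b} \in \mathfrak{b}(C)$ has the same defect in reverse: that correspondence is available from \cite{DP2019} only for realizable states, which is exactly what you may not assume here.

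The paper closes this gap with a short induction on $m$ that you could adopt. It uses two facts: (1) if the violation occurs at $i = 1$, then necessarily $\#(C \cap l^{h}_{1}) = n+2$, which means every leaf of $T(C)$ is a left or right return with delay at least $2$, so $L_{1}(T(C)) = \emptyset$ and $Q_{q}(T(C),v_{0},\alpha) = 0$ immediately; (2) if the violation occurs at some $i > 1$, then plucking a delay-one leaf $v$ is compatible with first-row removal: $C_{v} = C^{\{j_{v}\}} \in \mathrm{Cat}(m-1,n)$ satisfies $T(C_{v}) = T(C) - v$ with delay $\alpha_{v}$, and $\#(C_{v} \cap l^{h}_{i-1}) = \#(C \cap l^{h}_{i}) > n$, so each $C_{v}$ is again non-realizable and each summand $Q_{q}(T(C)-v,v_{0},\alpha_{v})$ vanishes by the induction hypothesis, killing the whole sum. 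This compatibility of plucking with the geometry of $C$ (which holds with no realizability hypothesis) is precisely the "quantitative and uniform translation" you flagged as the delicate point; without it, or something equivalent, your converse remains a plan rather than a proof.
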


\begin{proof}
If $C$ is realizable then $C(A) \neq 0$ by Theorem~\ref{thm:realizable_coef_non_zero} and consequently, $Q_{q}(T(C),v_{0},\alpha) \neq 0$ by Theorem~\ref{thm:coef_no_bot_rtn}. It remains to show that if $C \in \mathrm{Cat}(m,n)$ with no bottom returns is non-realizable then $Q_{q}(T(C),v_{0},\alpha) = 0$. Using Lemma~2.2 of \cite{DLP2015}, for $C$ with no bottom returns it must be $\#(C \cap l^{v}_{j}) \leq m$ for all $j = 1,2,\ldots,n-1$. Since $C$ is not realizable, there is $1 \leq i \leq m-1$ such that $\#(C \cap l^{h}_{i}) > n$ by Theorem~\ref{thm:vh_line_condi}, and in particular, $m \geq 2$.

Our proof is by induction on $m \geq 2$. For $m = 2$, since $\#(C \cap l^{h}_{1}) > n$, it must be $\#(C \cap l^{h}_{1}) = n+2$. Consequently, $T(C)$ has no leaves $v$ with $\alpha(v) = 1$, so $Q_{q}(T(C),v_{0},\alpha) = 0$ by Definition~\ref{def:q_poly}. For $m > 2$, if $\#(C \cap l^{h}_{1}) > n$ then $Q_{q}(T(C),v_{0},\alpha) = 0$ by the same argument as in case $m = 2$. Assume that $\#(C \cap l^{h}_{i}) > n$ for some $i > 1$. By Definition~\ref{def:q_poly},
\begin{equation*}
Q_{q}(T(C),v_{0},\alpha) 
= \sum_{v \in L_{1}(T(C))} q^{r(T(C),v_{0},v)} \, Q_{q}(T(C)-v,v_{0},\alpha_{v}).
\end{equation*}
For each $v \in L_{1}(T(C))$, let $C_{v} = C^{\{j_{v}\}} \in \mathrm{Cat}(m-1,n)$, where $j_{v}$ is the index of $e_{j}$ that corresponds to $v$. Clearly, $C_{v}$ is not realizable since $\#(C_{v} \cap l^{h}_{i-1}) = \#(C \cap l^{h}_{i}) > n$. Moreover, $T(C_{v}) = T(C)-v$ and $\alpha_{v}$ is its delay, so $Q_{q}(T(C)-v,v_{0},\alpha_{v}) = Q_{q}(T(C_{v}),v_{0},\alpha_{v}) = 0$ by induction hypothesis. Consequently, $Q_{q}(T(C),v_{0},\alpha) = 0$. 
\end{proof}

\begin{remark}
\label{rem:coef_no_bot_rtn_g}
From the result above, the formula for $C(A)$ given in Theorem~\ref{thm:coef_no_bot_rtn} can be used for all Catalan states $C$ with no bottom returns if we set $\beta(C) = 0$ when $C$ is non-realizable and define $Q^{*}(T(C),v_{0},\alpha) = 0$ whenever $Q(T(C),v_{0},\alpha) = 0$.
\end{remark}

\section{\texorpdfstring{$\Theta_{A}$}{\unichar{"03F4}}-state Expansion}
\label{s:Theta_state_expansion}
In this section we show that coefficient of any Catalan state $C$ with a given roof state $R$ (i.e., $C = R *_{v} F$ for some floor state $F$) can be found using coefficients of Catalan states with no top returns. For this purpose, we introduce a $\Theta_{A}$-state expansion for $(R,I)$ (see Definition~\ref{def:Theta_state_expansion}) that expresses $\Theta_{A}(R,I;\cdot)$ as a linear combination (over the field of rational functions) of $\Theta_{A}(R',I';\cdot)$'s, where $R'$ is a middle state. The coefficient of $C$ can be found by evaluating $\Theta_{A}$-state expansion for $(R,\emptyset)$ at $F$. Since each $R' *_{v} F_{I'}$ is either a Catalan state with no top returns or $K_{0}$, $\Theta_{A}(R',I';F) = [[R' *_{v} F_{I'}]]_{A}$ can be found by either applying Theorem~\ref{thm:coef_no_bot_rtn} or simply $\Theta_{A}(R',I';F) = 0$. As we will prove,  $\Theta_{A}$-state expansion exists for every $(R,I)$ (Theorem~\ref{thm:main}) and the proof of Theorem~\ref{thm:main} yields an algorithm (Algorithm~\ref{alg:Theta_state_expansion}) for finding a $\Theta_{A}$-state expansion for any pair $(R,I)$. This gives us an efficient method for finding coefficients of Catalan states with any given roof state. 

\begin{definition}
\label{def:Theta_state_expansion}
A \emph{$\Theta_{A}$-state expansion} for $(R,I) \in \mathcal{W}$ is a relation given by
\begin{equation}
\Theta_{A}(R,I;\cdot) = \sum_{(R',I') \in \mathcal{P}'} Q_{R',I'}(A) \, \Theta_{A}(R',I';\cdot),
\label{eqn:Theta_state_expansion}
\end{equation}
where $\mathcal{P}'$ is a finite subset of $\mathcal{W}$ that satisfies conditions
\begin{enumerate}
\item[i)] $R'$ in each pair $(R',I') \in \mathcal{P}'$ is a middle state,
\item[ii)] $\mathcal{P}' \trianglerighteq (R,I)$,
\end{enumerate}
and $0 \neq Q_{R',I'}(A) \in \mathbb{Q}(A)$ for every $(R',I') \in \mathcal{P}'$.
\end{definition}

\begin{figure}[ht] 
\centering
\includegraphics[scale=1]{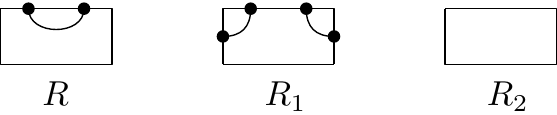}
\caption{Roof states $R$, $R_{1}$, and $R_{2}$}
\label{fig:ex_Theta_state_expansion_1}
\end{figure}

\begin{example}
\label{ex:Theta_state_expansion_1}
A $\Theta_{A}$-state expansion for $(R,\emptyset)$, where $R$ is shown in Figure~\ref{fig:ex_Theta_state_expansion_1}, is given by
\begin{equation*}
\Theta_{A}(R,\emptyset;\cdot)
= \frac{1}{A^{-2}+A^{2}} \, \Theta_{A}(R_{1},\emptyset;\cdot) 
- \frac{1}{A^{-2}+A^{2}} \, \Theta_{A}(R_{2},\{1\};\cdot),
\end{equation*}
where $R_{1}$ and $R_{2}$ are shown in Figure~\ref{fig:ex_Theta_state_expansion_1}. The equation above can be obtained using the first-row expansion \eqref{eqn:1st_row_exp} for $(R_{1},\emptyset)$. Moreover, for any realizable $C \in \mathrm{Cat}(m,2)$ with no top and bottom returns, by Theorem~\ref{thm:split_cat},
\begin{equation}
\label{eqn:ex_Theta_state_expansion_1}
\Theta_{A}(R,\emptyset;\cdot)
= \frac{1}{(A^{-2}+A^{2})C(A)} \, \Theta_{A}(C *_{v} R_{1},\emptyset;\cdot) 
- \frac{1}{A^{-2}+A^{2}} \, \Theta_{A}(R_{2},\{1\};\cdot).
\end{equation}
As one may verify, \eqref{eqn:ex_Theta_state_expansion_1} is a $\Theta_{A}$-state expansion for $(R,\emptyset)$. Thus, in fact there are infinitely many different $\Theta_{A}$-state expansions for $(R,\emptyset)$.
\end{example}

\begin{figure}[ht] 
\centering
\includegraphics[scale=1]{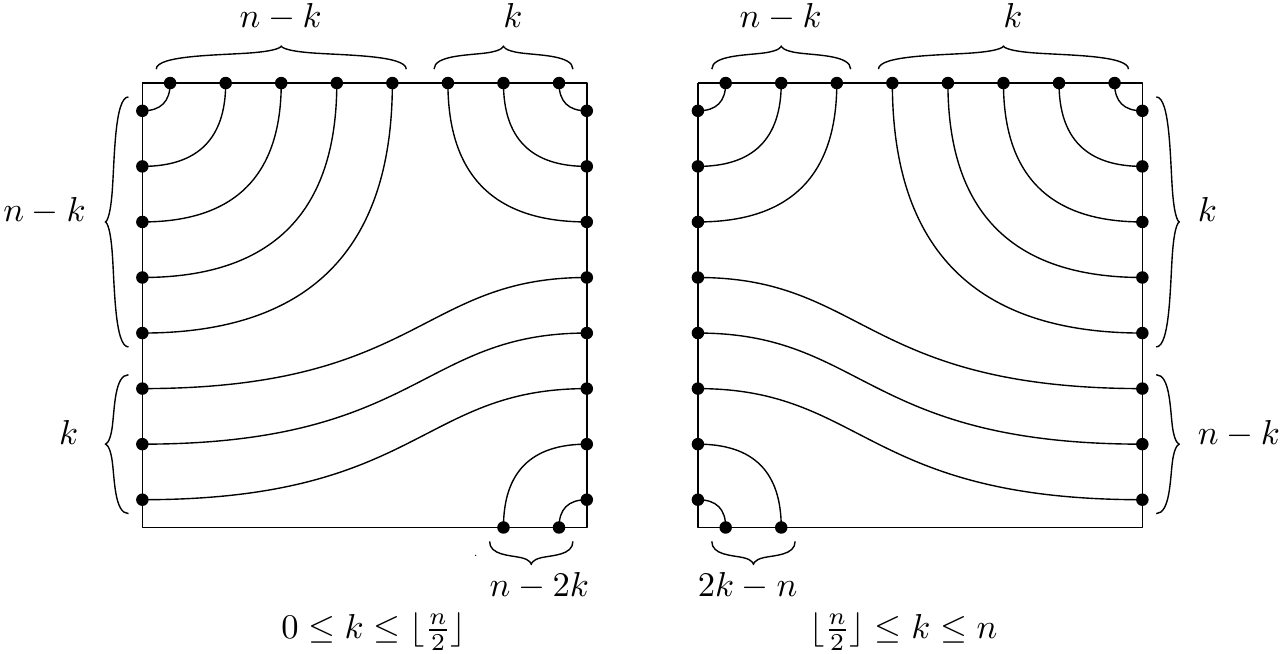}
\caption{Middle states $R_{n,k}$}
\label{fig:lem_roof_R_nk_1}
\end{figure}

\begin{lemma}
\label{lem:roof_R_nk_1}
Let $R_{n,k}$ and $R'_{n,k,l}$, $0 \leq k \leq n$ and $0 \leq l \leq \min\{k,n-k\}$, be as in Figure~\ref{fig:lem_roof_R_nk_1} and Figure~\ref{fig:lem_roof_R_nk_2}, respectively. Then there are Laurent polynomials $Z_{R,I}(A)$ such that
\begin{equation}
\Theta_{A}(R_{n,k},\emptyset;\cdot) = \sum_{(R,I) \in \mathcal{P}} Z_{R,I}(A) \, \Theta_{A}(R,I;\cdot), 
\label{eqn:roof_R_nk}
\end{equation}
where $\mathcal{P} = \{(R'_{n,k,|I|},I) \mid I \in \mathcal{L}_{n}, \, |I| \leq \min\{k,n-k\}\} \trianglerighteq (R_{n,k},\emptyset)$.
\end{lemma}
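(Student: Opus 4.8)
The plan is to prove \eqref{eqn:roof_R_nk} as an identity of functions of the floor-state argument. Since by \eqref{eqn:Theta_A_def} each $\Theta_{A}(R,I;\cdot)$ is the function $F \mapsto [[R *_{v} F_{I}]]_{A}$, it suffices to verify, for every floor state $F$ (and for $F = K_{0}$), the scalar identity
\[
[[R_{n,k} *_{v} F]]_{A} = \sum_{(R,I)\in\mathcal{P}} Z_{R,I}(A)\,[[R *_{v} F_{I}]]_{A}.
\]
Before doing so I would record the geometric data of the two families: $R_{n,k}$ is a middle state with $n_{t}(R_{n,k}) = n$, and $R'_{n,k,l}$ is a middle state with $n_{t}(R'_{n,k,l}) = n-2l$ and $n_{b}(R'_{n,k,l}) = n_{b}(R_{n,k})$, the parameter $l$ counting how many of the top strands have been bent into nested turnbacks. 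The bound $l \le \min\{k,n-k\}$ is the maximal number of such turnbacks available when $k$ strands point one way and $n-k$ the other; I would first isolate this counting fact, since it controls exactly which $I \in \mathcal{L}_{n}$ can occur. Note also that every target $R'_{n,k,l} *_{v} F_{I}$ has no top returns, which is precisely why these are the right pieces.

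The engine of the argument is the first-row expansion (Proposition~\ref{prop:1st_row_exp}) together with the transfer rules of Proposition~\ref{prop:P_prime_concat_reflect_formula}. Because $R_{n,k}$ is itself a middle state its internal top arcs are absent, so expanding $R_{n,k}$ directly is uninformative; instead I would induct on $\min\{k,n-k\}$, obtaining the expansion for $R_{n,k}$ from that of a simpler member of the family. At each step one either applies \eqref{eqn:1st_row_exp} to an auxiliary roof state that carries the crossing pattern in a single expandable row and then solves for $\Theta_{A}(R_{n,k},\emptyset;\cdot)$ as in Example~\ref{ex:Theta_state_expansion_1}, or transports a known expansion across a vertical product, an $\oplus$-shift, or a reflection using Proposition~\ref{prop:P_prime_concat_reflect_formula}(i)--(iii). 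In every case the translation between the top-index $J$ and the bottom-return set $I$, and the bookkeeping $I \mapsto I \oplus \widetilde{I}$, is handled by Lemma~\ref{lem:F_J_I} and the $\oplus/\ominus$ calculus of Propositions~\ref{prop:prop_oplus} and \ref{prop:prop_ominus}. Grouping the monomials $A^{-n+2(\Vert J\Vert-\Vert I\Vert)}$ by target pair exhibits each $Z_{R,I}(A)$ as a finite sum of monomials; where a solve introduces a denominator $-A^{2}-A^{-2}$, I would check that it cancels against a matching geometric sum, as the lemma's assertion that the $Z_{R,I}(A)$ are Laurent polynomials requires.

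For the poset condition $\mathcal{P} \trianglerighteq (R_{n,k},\emptyset)$ I would verify a)--c) of the definition of $\trianglelefteq$ directly for each pair $(R'_{n,k,|I|},I)$: condition a) is $I \ominus \emptyset = I \in \mathcal{L}_{n} = \mathcal{L}_{n_{t}(R_{n,k})}$, which holds by hypothesis; condition b) is $n_{t}(R'_{n,k,|I|})+2|I| = (n-2|I|)+2|I| = n = n_{t}(R_{n,k})$; and condition c) is $n_{b}(R'_{n,k,|I|}) = n_{b}(R_{n,k})$, true by construction. Alternatively, each single use of the first-row expansion already supplies a $\trianglelefteq$ relation (Proposition~\ref{prop:1st_row_exp}), so transitivity (Lemma~\ref{lem:transitivity_triangle}) propagates $\trianglerighteq$ to the final pairs automatically.

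The step I expect to be the main obstacle is the geometric identification accompanying the counting fact: showing that each admissible resolution really is the claimed middle state $R'_{n,k,|I|}$ (with $l=|I|$ forced), that the admissible $I$ are exactly those in $\mathcal{L}_{n}$ with $|I| \le \min\{k,n-k\}$ --- with neither spurious nor missing terms --- and that after any solving the coefficients collapse to genuine Laurent polynomials rather than merely rational functions. This is a careful matching of turnbacks to bottom returns rather than a formal manipulation, and it is where the configurations in Figures~\ref{fig:lem_roof_R_nk_1} and \ref{fig:lem_roof_R_nk_2} must be used in earnest.
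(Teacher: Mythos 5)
Your proposal goes wrong at its central premise. You claim that ``because $R_{n,k}$ is itself a middle state its internal top arcs are absent, so expanding $R_{n,k}$ directly is uninformative.'' This misreads the definition of $\mathcal{J}(R)$: by the paper's convention $x_{0} = y_{1}$ and $x_{n+1} = y'_{1}$, so the arcs $e_{0}$ (joining $y_{1}$ to $x_{1}$) and $e_{n}$ (joining $x_{n}$ to $y'_{1}$) count, not only top returns. For the middle state $R_{n,k}$ one has $\mathcal{J}(R_{n,k}) = \{0,n\}$, so $\mathcal{H}(R_{n,k})$ consists of $(\{0\},\emptyset)$, $(\{n\},\emptyset)$, and $(\{0,n\},\{i\})$ for $0 < i < n$, and the first-row expansion \eqref{eqn:1st_row_exp_over_P} of $(R_{n,k},\emptyset)$ is a nontrivial sum of $n+1$ terms. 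The paper's proof is exactly this direct expansion, iterated $n$ times (once per row, each iteration lowering $\mathrm{ht}$ by one) until every roof state appearing has height $0$; since each step multiplies and adds monomials $A^{-n+2(\Vert J\Vert - \Vert I\Vert)}$, all coefficients are Laurent polynomials automatically, with no division anywhere. Your alternative --- inducting on $\min\{k,n-k\}$, solving auxiliary identities as in Example~\ref{ex:Theta_state_expansion_1}, and then hoping the denominators (of the type $Z_{R'_{n,j,0},\emptyset}(A)$, e.g.\ $A^{-2}+A^{2}$) cancel --- leaves the lemma's key assertion of Laurent polynomiality unproven; you flag this yourself as ``the main obstacle'' but offer no mechanism for the cancellation. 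In the paper, the solving step with its rational-function coefficients is deliberately deferred to the proof of Theorem~\ref{thm:main}, where only membership in $\mathbb{Q}(A)$ is needed; importing it into this lemma is the wrong move.

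Beyond that, the genuine mathematical content of the lemma is the identification of the terminal set of pairs with $\mathcal{P}$, and your proposal defers it entirely. The paper proves both inclusions: $\mathcal{P}_{n} \subseteq \mathcal{P}$ by showing every intermediate roof state $\tilde{R}$ has $|\mathcal{J}(\tilde{R})| \in \{1,2\}$ (never $0$), analyzing the three possible outcomes of expanding a state with $|\mathcal{J}(\tilde{R})| = 2$, and ruling out $|\mathcal{J}(R)| = 2$ for terminal states by the counting argument $c_{1}+c_{2}+c_{3} = n$, $j_{1} = c_{2}$, $j_{2} = n - c_{1} - 2c_{3}$, which forces the contradiction $1 \leq j_{2}-j_{1} = -|I| \leq 0$; and $\mathcal{P} \subseteq \mathcal{P}_{n}$ by exhibiting, for each $(R'_{n,k,l},I) \in \mathcal{P}$, an explicit sequence $(J'_{j},I'_{j}) \in \mathcal{H}(R_{j})$ of expansion choices realizing it. Your proposal replaces all of this with the sentence that it is ``a careful matching of turnbacks to bottom returns,'' which is a restatement of the problem, not an argument. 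The only part you do verify --- conditions a)--c) giving $\mathcal{P} \trianglerighteq (R_{n,k},\emptyset)$ --- is correct but is the trivial part of the lemma.
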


\begin{figure}[ht] 
\centering
\includegraphics[scale=1]{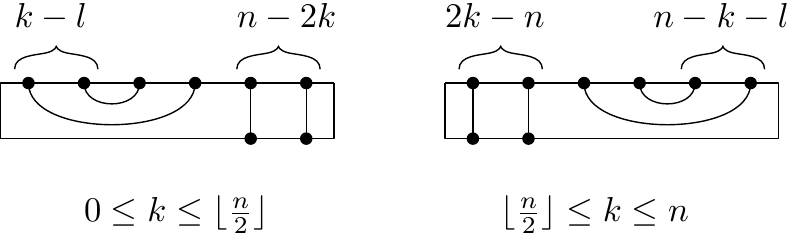}
\caption{Top states $R'_{n,k,l}$}
\label{fig:lem_roof_R_nk_2}
\end{figure}

\begin{proof}
For $n = 0$ the statement is clearly true, so we assume that $n = \mathrm{ht}(R_{n,k}) \geq 1$. Using the first-row expansion \eqref{eqn:1st_row_exp_over_P} for $(R_{n,k},\emptyset)$ we see that
\begin{equation}
\Theta_{A}(R_{n,k},\emptyset;\cdot) = \sum_{(R',I') \in \mathcal{P}_{1}} Z_{R',I'}(A) \, \Theta_{A}(R',I';\cdot),
\label{eqn:pf_lem_roof_R_nk_1}
\end{equation}
where $\mathcal{P}_{1} = \{((R_{n,k})^{J}, I) \mid (J,I) \in \mathcal{H}(R_{n,k})\}$ and $Z_{R',I'}(A)$'s are Laurent polynomials.
We see that, for each $(R',I') \in \mathcal{P}_{1}$, $\mathrm{ht}(R') = n-1$. If $n \geq 2$, we may apply \eqref{eqn:1st_row_exp_over_P} to each $(R',I') \in \mathcal{P}_{1}$ and therefore $\Theta_{A}(R',I';\cdot)$ can be written as a sum over a set $\mathcal{P}_{R',I'}$. Notice that, since $\mathcal{P}_{R',I'} \trianglerighteq (R',I')$ and $\mathcal{P}_{1} \trianglerighteq (R_{n,k},\emptyset)$ by Proposition~\ref{prop:1st_row_exp}, it follows by Lemma~\ref{lem:transitivity_triangle} that $\mathcal{P}_{R',I'} \trianglerighteq (R_{n,k},\emptyset)$ for every $(R',I') \in \mathcal{P}_{1}$. After substituting for $\Theta_{A}(R',I';\cdot)$'s into \eqref{eqn:pf_lem_roof_R_nk_1} and collecting like terms, we see that
\begin{equation*}
\Theta_{A}(R_{n,k},\emptyset;\cdot) = \sum_{(R'',I'') \in \mathcal{P}_{2}} Z_{R'',I''}(A) \, \Theta_{A}(R'',I'';\cdot),
\end{equation*}
where $\mathcal{P}_{2} = \bigcup_{(R',I') \in \mathcal{P}_{1}} \mathcal{P}_{R',I'}$. Clearly, $\mathcal{P}_{2} \trianglerighteq (R_{n,k},\emptyset)$, $\mathrm{ht}(R'') = 0$ for each $(R'',I'') \in \mathcal{P}_{2}$, and each $Z_{R'',I''}(A)$ is a Laurent polynomial. A finite number of iterations of the above procedure results in
\begin{equation}
\label{eqn:pf_lem_roof_R_nk_2}
\Theta_{A}(R_{n,k},\emptyset;\cdot) = \sum_{(R,I) \in \mathcal{P}_{n}} Z_{R,I}(A) \, \Theta_{A}(R,I;\cdot),
\end{equation}
where $\mathcal{P}_{n} \trianglerighteq (R_{n,k},\emptyset)$, $\mathrm{ht}(R) = 0$ for each $(R,I) \in \mathcal{P}_{n}$, and $Z_{R,I}(A)$'s are Laurent polynomials. 

We first show that $\mathcal{P}_{n} \subseteq \mathcal{P}$, i.e., we prove that for each $(R,I) \in \mathcal{P}_{n}$, $R = R'_{n,k,|I|}$, $I \in \mathcal{L}_{n}$, and $|I| \leq \min\{k,n-k\}$. Since $\mathcal{P}_{n} \trianglerighteq (R_{n,k},\emptyset)$, we see that $I \in \mathcal{L}_{n}$ for all $(R,I) \in \mathcal{P}_{n}$. Moreover, if $R = R'_{n,k,|I|}$ then $|I| \leq \min\{k,n-k\}$. Hence, to complete our argument, it suffices to show that $R = R'_{n,k,|I|}$.

Let $(\tilde{R},\tilde{I}) \in \mathcal{P}_{i}$, $1 \leq i \leq n-1$. Since $R_{n,k}$ has neither left nor right returns, $\tilde{R}$ obtained from $R_{n,k}$ by applying first-row expansions has also neither left nor right returns and consequently $|\mathcal{J}(\tilde{R})| \leq |\mathcal{J}(R_{n,k})| = 2$. Moreover, if $|\mathcal{J}(\tilde{R})| = 0$ then each arc of $\tilde{R}$ must have one of its ends on the bottom side of $\tilde{R}$. Since $\mathrm{ht}(\tilde{R}) = n-i > 0$, 
$\tilde{R}$ has two arcs $\tilde{c}$ and $\tilde{c}'$ with ends $y_{1}$ and $y'_{1}$, respectively. However, since $\tilde{R}$ is obtained by the first-row expansions of $(R_{n,k},\emptyset)$, arcs $\tilde{c}$ and $\tilde{c}'$ should correspond to arcs joining bottom-boundary points with left- and right-boundary points of $R_{n,k}$, respectively. Since $R_{n,k}$ does not have two arcs with such property, $|\mathcal{J}(\tilde{R})| = 0$ is not possible. It follows that $|\mathcal{J}(\tilde{R})| = 1$ or $2$ for each $(\tilde{R},\tilde{I}) \in \mathcal{P}_{i}$. In the former case, let $\mathcal{J}(\tilde{R}) = \{j\}$ and then by \eqref{eqn:1st_row_exp},
\begin{equation*}
\Theta_{A}(\tilde{R},\tilde{I};\cdot) = A^{-n_{t}(\tilde{R})+2j} \, \Theta_{A}(\tilde{R}',\tilde{I}';\cdot),
\end{equation*}
where $\tilde{R}' = \tilde{R}^{\{j\}}$, $n_{t}(\tilde{R}') = n_{t}(\tilde{R})$, $\mathcal{J}(\tilde{R}') = \{j\}$ or $\emptyset$, and $\tilde{I}' = \tilde{I}$.

In the latter case, let $\mathcal{J}(\tilde{R}) = \{j_{1} < j_{2}\}$ and then by \eqref{eqn:1st_row_exp},
\begin{equation*}
\Theta_{A}(\tilde{R},\tilde{I};\cdot) = A^{-n_{t}(\tilde{R})+2j_{1}} \, \Theta_{A}(\tilde{R}'_{1},\tilde{I}'_{1};\cdot) + A^{-n_{t}(\tilde{R})+2j_{2}} \, \Theta_{A}(\tilde{R}'_{2},\tilde{I}'_{2};\cdot) + \sum_{j_{1} < a < j_{2}} A^{-n_{t}(\tilde{R})+2(j_{1}+j_{2}-a)} \, \Theta_{A}(\tilde{R}''_{a},\tilde{I}''_{a};\cdot),
\end{equation*}
where in the above sum
\begin{itemize}
\item $\tilde{R}'_{1} = \tilde{R}^{\{j_{1}\}}$, $n_{t}(\tilde{R}'_{1}) = n_{t}(\tilde{R})$, $\mathcal{J}(\tilde{R}'_{1}) = \{j_{1},j_{2}-1\}$ or $\{j_{2}-1\}$, and $\tilde{I}'_{1} = \tilde{I}$;
\item $\tilde{R}'_{2} = \tilde{R}^{\{j_{2}\}}$,  $n_{t}(\tilde{R}'_{2}) = n_{t}(\tilde{R})$, $\mathcal{J}(\tilde{R}'_{2}) = \{j_{1}+1,j_{2}\}$ or $\{j_{1}+1\}$, and $\tilde{I}'_{2} = \tilde{I}$;
\item $\tilde{R}''_{a} = \tilde{R}^{\{j_{1},j_{2}\}}$, $n_{t}(\tilde{R}''_{a}) = n_{t}(\tilde{R})-2$, $\mathcal{J}(\tilde{R}''_{a}) = \{j_{1},j_{2}-2\}$, $\{j_{1}\}$, $\{j_{2}-2\}$, or $\emptyset$, and $\tilde{I}''_{a} = \{a\} \oplus \tilde{I}$;
\end{itemize}
as shown in Figure~\ref{fig:pf_lem_roof_R_nk_1}. 

\begin{figure}[ht]
\centering
\includegraphics[scale=1]{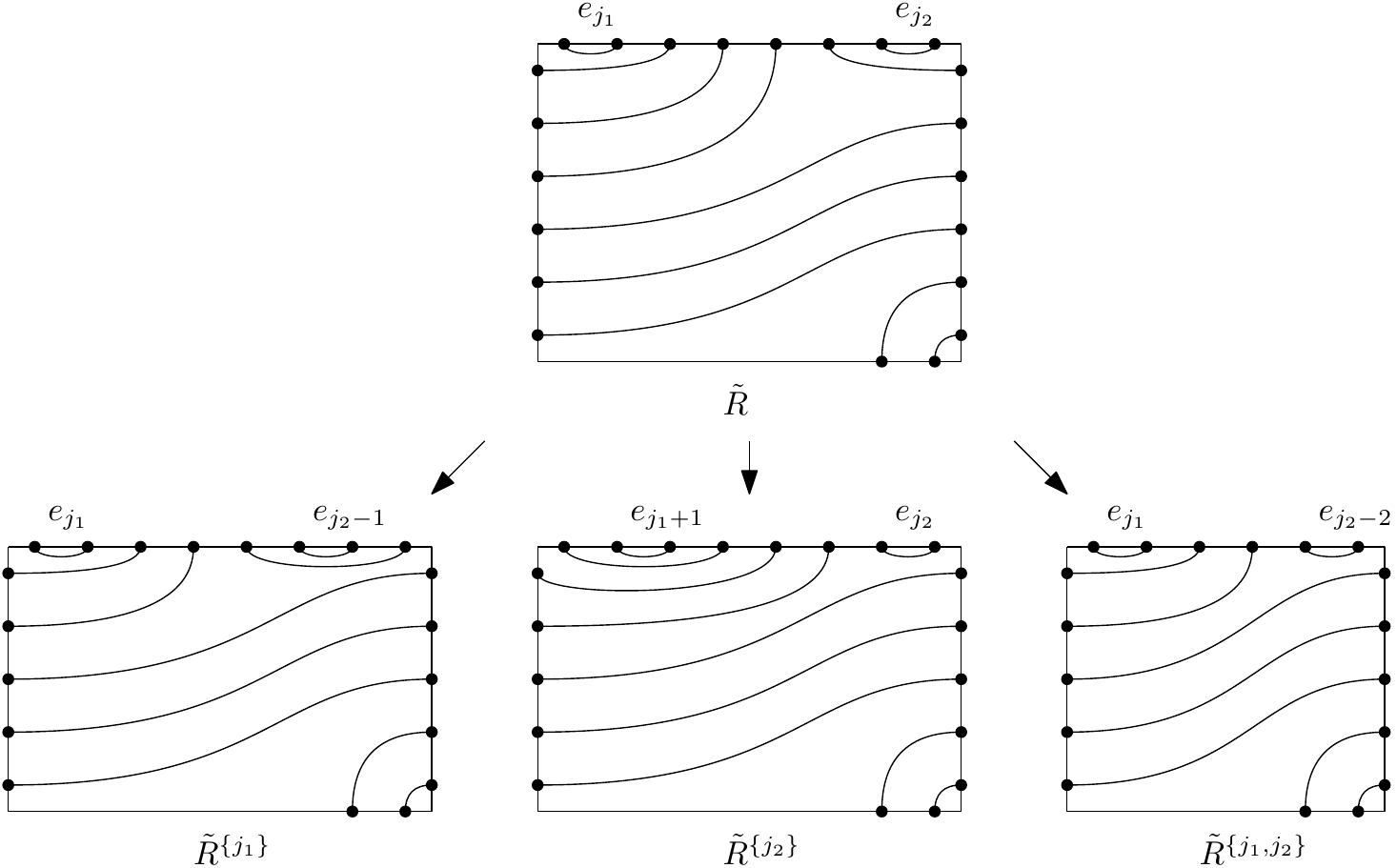}
\caption{Case $\mathcal{J}(\tilde{R}) = \{j_{1}, j_{2}\}$}
\label{fig:pf_lem_roof_R_nk_1}
\end{figure}

Since $(R,I) \in \mathcal{P}_{n}$ is obtained by the first-row expansion of some $(\tilde{R},\tilde{I}) \in \mathcal{P}_{n-1}$, either $|\mathcal{J}(R)| = 0$, $1$, or $2$. We show that $|\mathcal{J}(R)| = 2$ is not possible. Indeed, if $\mathcal{J}(R) = \{j_{1} < j_{2}\}$, then $(R,I)$ is a result of $n$ first-row expansions of pairs $(\tilde{R},\tilde{I})$ with $|\mathcal{J}(\tilde{R})| = 2$. As we argued above, in such a case each $(\tilde{R},\tilde{I}) = (\tilde{R}'_{1},\tilde{I}'_{1})$, $(\tilde{R}'_{2},\tilde{I}'_{2})$, or $(\tilde{R}''_{a},\tilde{I}''_{a})$ for some $a$.
Let $c_{1}$ be the number of times $(\tilde{R},\tilde{I}) = (\tilde{R}'_{1},\tilde{I}'_{1})$, $c_{2}$ be the number of times $(\tilde{R},\tilde{I}) = (\tilde{R}'_{2},\tilde{I}'_{2})$, and $c_{3}$ be the number of times $(\tilde{R},\tilde{I}) = (\tilde{R}''_{a},\tilde{I}''_{a})$ for some $a$. Clearly, $c_{1}+c_{2}+c_{3} = n$ and, as one can see by analyzing these three cases, $|I| = c_{3}$, $j_{1} = c_{2}$, and $j_{2} = n-c_{1}-2c_{3}$. Therefore,
\begin{equation*}
1 \leq j_{2}-j_{1} = -|I| \leq 0,
\end{equation*}
a contradiction.

Therefore, $|\mathcal{J}(R)| \leq 1$ and we see that $\mathrm{ht}(R) = 0$, $n_{b}(R) = n_{b}(R_{n,k}) = |n-2k|$, and all top returns of $R$ are to the left (respectively right) of all arcs with one end on the bottom if $0 \leq k \leq \lfloor\frac{n}{2}\rfloor$ (respectively $\lfloor\frac{n}{2}\rfloor \leq k \leq n$). It follows that $R = R'_{n,k,l}$ for some $l$ and, since $n_{t}(R) = n-2|I|$ and $n_{t}(R'_{n,k,l}) = n-2l$, it must be $l = |I|$. 

We now show that $\mathcal{P} \subseteq \mathcal{P}_{n}$. Let $(R'_{n,k,l},I) \in \mathcal{P}$, where $I = \{i_{1},i_{2},\ldots,i_{l}\} \in \mathcal{L}_{n}$ and $l = |I| \leq \min\{k,n-k\}$, and let $(R_{0},I_{0}) = (R_{n,k},\emptyset)$. It suffices to find a sequence of pairs $(R_{j+1},I_{j+1}) \in \mathcal{P}_{j+1}$, $0 \leq j \leq n-1$, such that $(R_{n},I_{n}) = (R'_{n,k,l},I)$ and $(R_{j+1},I_{j+1}) = ((R_{j})^{J'_{j}},I'_{j} \oplus I_{j})$ for some $(J'_{j},I'_{j}) \in \mathcal{H}(R_{j})$. 

If $0 \leq k \leq \lfloor\frac{n}{2}\rfloor$, let
\begin{equation*}
(J'_{j},I'_{j}) =
\begin{cases}
(\{0,n-2j\},\{i_{l-j}\}), & \text{for}\ 0 \leq j \leq l-1, \\ 
(\{n-2l\},\emptyset), & \text{for}\ l \leq j \leq k-1, \\ 
(\{k-l\},\emptyset), & \text{for}\ k \leq j \leq n-1.
\end{cases}
\end{equation*}
We argue that $(J'_{j},I'_{j}) \in \mathcal{H}(R_{j})$. For $0 \leq j \leq l-1$, since $R_{j}$ is as shown in Figure~\ref{fig:cases_R_j_1} and $\{i_{l-j}\} \preceq \{i_{1},\ldots,i_{l-j-1}\} \oplus \{i_{l-j}\} = \{i_{1},\ldots,i_{l-j}\} = I \ominus \{i_{l+1-j},\ldots,i_{l}\} \in L_{n-2j}$ by Proposition~\ref{prop:prop_oplus}(i) and Proposition~\ref{prop:prop_ominus}(i), it follows that $\mathcal{J}(R_{j}) = \{0,n-2j\}$ and $0 < i_{l-j} < n-2j$. Therefore, $(\{0,n-2j\},\{i_{l-j}\}) \in \mathcal{D}_{n-2j}$ and $\{0,n-2j\} \subseteq \mathcal{J}(R_{j})$, i.e., $(J'_{j},I'_{j}) = (\{0,n-2j\},\{i_{l-j}\}) \in \mathcal{H}(R_{j})$ for all $0 \leq j < l$. 

\begin{figure}[ht] 
\centering
\includegraphics[scale=1]{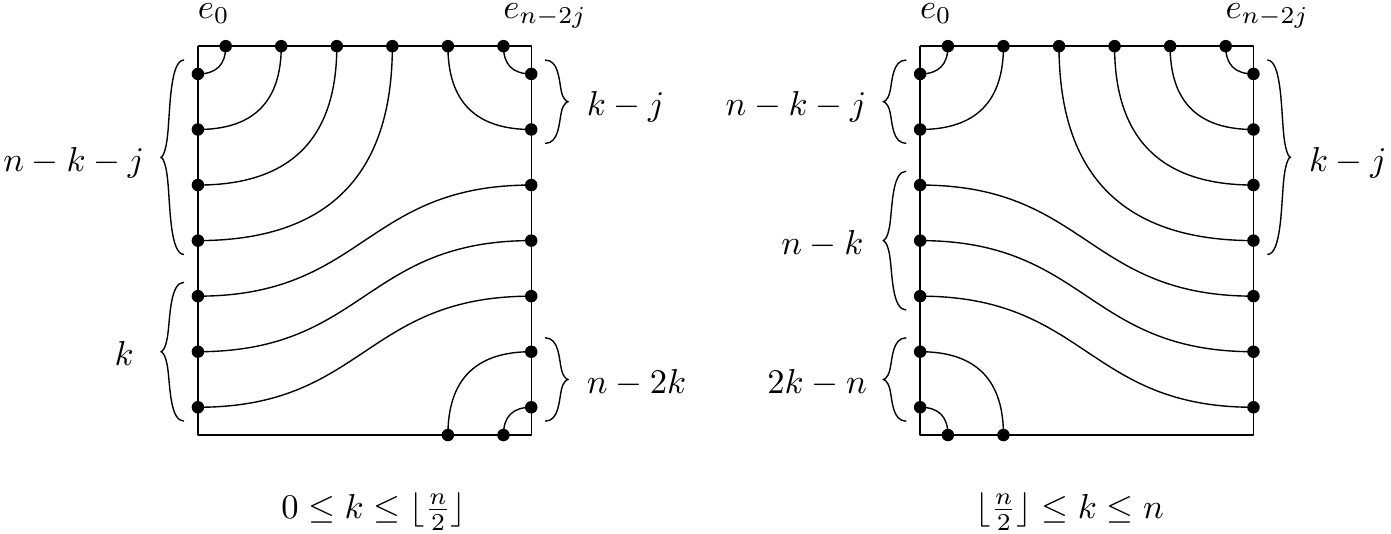}
\caption{Roof states $R_{j}$ for $0 \leq j \leq l$}
\label{fig:cases_R_j_1}
\end{figure}

After first $l$ steps $R_{l}$ is as shown in Figure~\ref{fig:cases_R_j_1} with $j = l$ and, as one may verify, $R_{j}$ is the roof state shown in Figure~\ref{fig:cases_R_j_2} for $l < j < k$. Therefore, $\mathcal{J}(R_{j}) = \{j-l,n-2l\}$ and consequently $(J'_{j},I'_{j}) = (\{n-2l\},\emptyset) \in \mathcal{H}(R_{j})$ for all $l \leq j < k$. 

\begin{figure}[ht] 
\centering
\includegraphics[scale=1]{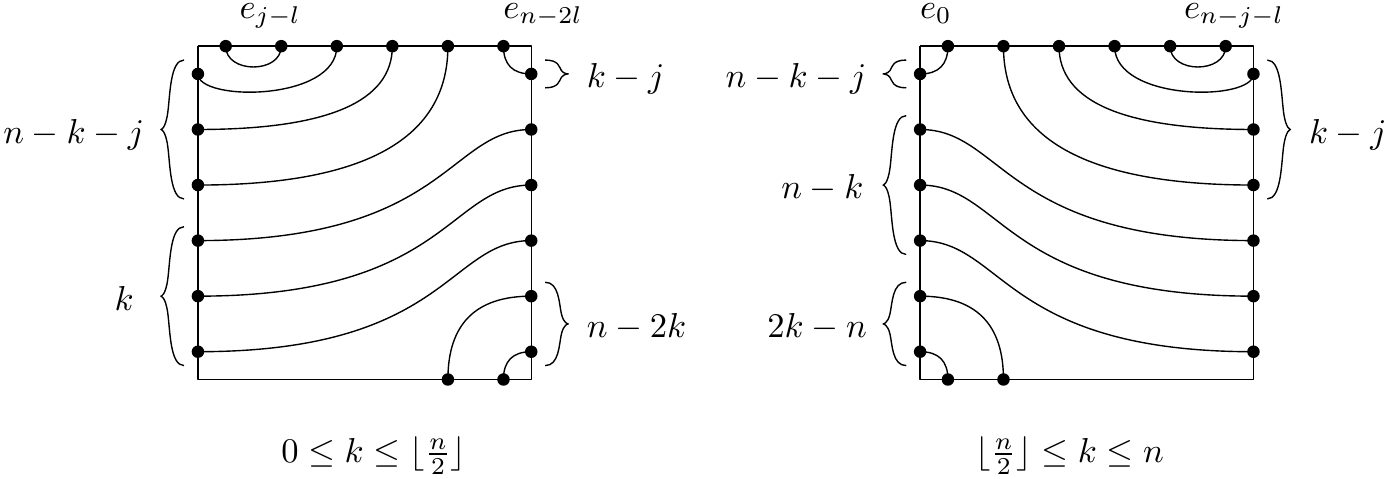}
\caption{Roof states $R_{j}$ for $l < j < \min\{k,n-k\}$}
\label{fig:cases_R_j_2}
\end{figure}

Finally, we see that $R_{j}$ is the roof state shown in Figure~\ref{fig:cases_R_j_3} for $k \leq j < n$. Therefore, $\mathcal{J}(R_{j}) = \{k-l\}$ and consequently $(J'_{j},I'_{j}) = (\{k-l\},\emptyset) \in \mathcal{H}(R_{j})$ for all $k \leq j \leq n-1$. Clearly $R_{n} = R'_{n,k,l}$ is as shown in Figure~\ref{fig:cases_R_j_3} with $j = n$ and
\begin{equation*}
I_{n} = \underbrace{\emptyset \oplus \cdots \oplus \emptyset}_{n-l} \oplus \{i_{1}\} \oplus \{i_{2}\} \oplus \cdots \oplus \{i_{l}\}  = I
\end{equation*}
by Proposition~\ref{prop:prop_oplus}(i).

\begin{figure}[ht] 
\centering
\includegraphics[scale=1]{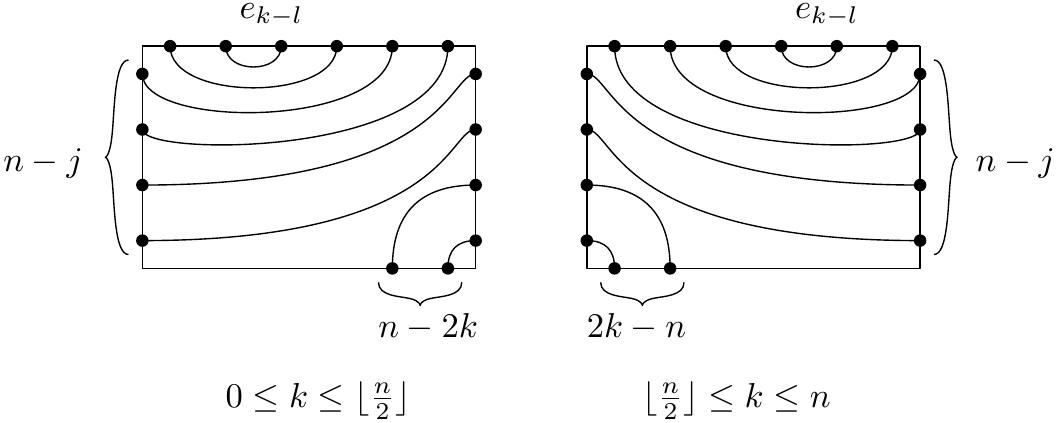}
\caption{Roof states $R_{j}$ for $\min\{k,n-k\} \leq j \leq n$}
\label{fig:cases_R_j_3}
\end{figure}

If $\lfloor\frac{n}{2}\rfloor \leq k \leq n$, let
\begin{equation*}
(J'_{j},I'_{j}) =
\begin{cases}
(\{0,n-2j\},\{i_{l-j}\}), & \text{for}\ 0 \leq j \leq l-1, \\ 
(\{0\},\emptyset), & \text{for}\ l \leq j \leq n-k-1, \\ 
(\{k-l\},\emptyset), & \text{for}\ n-k \leq j \leq n-1.
\end{cases}
\end{equation*}
Using analogous arguments as in the previous case with diagrams on the right of Figures~\ref{fig:cases_R_j_1}-\ref{fig:cases_R_j_3}, we can show that $(J'_{j},I'_{j}) \in \mathcal{H}(R_{j})$ for all $0 \leq j \leq n-1$, $R_{n} = R'_{n,k,l}$, and $I_{n} = I$.

Since as it was shown above $\mathcal{P}_{n} = \mathcal{P}$ in \eqref{eqn:pf_lem_roof_R_nk_2}, the proof is completed.
\end{proof}

\begin{lemma}
\label{lem:roof_R_nk_2}
Coefficients $Z_{R,I}(A)$ in Lemma~\ref{lem:roof_R_nk_1} are given by
\begin{equation*}
Z_{R,I}(A) = Z_{R'_{n,k,|I|},I}(A) = A^{(n-2k)(\min\{k,n-k\}-|I|)} \, [[R''_{n,k,|I|} *_{v} \varphi_{n}^{-1}(I)]]_{A},
\end{equation*}
where $R''_{n,k,l}$ are shown in Figure~\ref{fig:lem_roof_R_nk_3}. In particular, $Z_{R,I}(A) \neq 0$.
\end{lemma}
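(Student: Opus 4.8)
The plan is to pin down each coefficient $Z_{R'_{n,k,l},I}(A)$ individually by \emph{evaluating} the identity \eqref{eqn:roof_R_nk} of Lemma~\ref{lem:roof_R_nk_1} at a single, carefully chosen floor state $F = F_{l,I}$, arranged so that upon substitution the right-hand side collapses to the one term indexed by $(R'_{n,k,l},I)$. Writing $l = |I|$ and $l_{0} = \min\{k,n-k\}$, the natural candidate is $F = (R'_{n,k,l})^{*} *_{v} \varphi_{n}^{-1}(I)$, the $\pi$-rotation of the top state $R'_{n,k,l}$ stacked above the bottom state $\varphi_{n}^{-1}(I)$. First I would check dimensions: $F$ is a floor state with $n_{t}(F) = |n-2k|$ and $n_{b}(F) = n$, which is exactly what is needed so that every product $R'_{n,k,|I'|} *_{v} F_{I'}$ and $R_{n,k} *_{v} F$ is a genuine Catalan state. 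Since $R_{n,k}$ and all $R'_{n,k,|I'|}$ are middle/top states, these Catalan states have \emph{no top returns}, so each $\Theta_{A}$-value in sight is a well-defined coefficient, computable via Remark~\ref{rem:prop_coef}(ii) and Theorem~\ref{thm:coef_no_bot_rtn} from a plane rooted tree.

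Granting the collapse, two computations remain. On the surviving right-hand term, one shows $\Theta_{A}(R'_{n,k,l},I;F) = [[R'_{n,k,l} *_{v} F_{I}]]_{A}$ is a single monomial: removing the bottom returns indexed by $I$ turns $F_{I}$ into $(R'_{n,k,l})^{*} *_{v} L(0,n-2l)$, and capping $R'_{n,k,l}$ against its own rotation yields a crossingless, returnless matching whose coefficient is a pure power of $A$. On the left, I would use Theorem~\ref{thm:split_cat} to split off $\varphi_{n}^{-1}(I)$ and then apply a regular isotopy that slides the $l_{0}-l$ excess returns of $R_{n,k} *_{v} (R'_{n,k,l})^{*}$ across the bands; this is where the factor $A^{(n-2k)(l_{0}-l)}$ is produced, via the diagram identity $R_{n,k} *_{v} (R'_{n,k,l})^{*} \simeq A^{(n-2k)(l_{0}-l)} R''_{n,k,l}$ together with the normalization $A^{2\beta-mn}$ of Theorem~\ref{thm:coef_no_bot_rtn}. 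Solving the resulting one-term equation isolates $Z_{R'_{n,k,l},I}(A) = A^{(n-2k)(l_{0}-l)}[[R''_{n,k,l} *_{v} \varphi_{n}^{-1}(I)]]_{A}$, and since $R''_{n,k,l} *_{v} \varphi_{n}^{-1}(I)$ is realizable, Theorem~\ref{thm:realizable_coef_non_zero} gives $Z_{R,I}(A) \neq 0$.

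The hard part will be the \emph{collapse} step: proving that $\Theta_{A}(R'_{n,k,|I'|},I';F) = [[R'_{n,k,|I'|} *_{v} F_{I'}]]_{A}$ vanishes for every $(R'_{n,k,|I'|},I') \in \mathcal{P}$ with $I' \neq I$. For $I' \not\preceq \varphi_{n}(F)$ one has $F_{I'} = K_{0}$ outright; the remaining $I'$ must be excluded by showing that $R'_{n,k,|I'|} *_{v} F_{I'}$ is either not a Catalan state (a dimension mismatch, detected through the $\ominus$-bookkeeping used in the proof of Proposition~\ref{prop:P_prime_concat_reflect_formula}) or non-realizable (ruled out by the line-intersection criterion of Theorem~\ref{thm:vh_line_condi} via Theorem~\ref{thm:realizable_coef_non_zero}). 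Carrying out this return-by-return analysis, in tandem with verifying the framing identity $R_{n,k} *_{v} (R'_{n,k,l})^{*} \simeq A^{(n-2k)(l_{0}-l)} R''_{n,k,l}$ precisely, is the crux; the algebraic steps (splitting, rotation invariance, and solving a single linear equation) are routine once the collapse is in hand.
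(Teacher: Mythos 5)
Your overall strategy is the paper's own (evaluate \eqref{eqn:roof_R_nk} of Lemma~\ref{lem:roof_R_nk_1} at one carefully chosen floor state so that the sum collapses to the single term indexed by $(R'_{n,k,l},I)$, then solve), but your choice of floor state is wrong, and it breaks both halves of the argument. Write $l=|I|$, $k'=\min\{k,n-k\}$, and suppose $k\leq\lfloor n/2\rfloor$ and $l<k'$. First, the term you want to survive is not a monomial but zero: since $F_{I}=(R'_{n,k,l})^{*}$, the surviving value is $[[R'_{n,k,l}*_{v}(R'_{n,k,l})^{*}]]_{A}$, and $R'_{n,k,l}*_{v}(R'_{n,k,l})^{*}$ is a Catalan state of height $0$, i.e.\ a state of $L(0,n-2l)$, which has $k-l>0$ top and bottom returns; but the only realizable Catalan state of $L(0,n-2l)$ is $L(0,n-2l)$ itself, so this coefficient vanishes and your ``one-term equation'' cannot be solved for $Z_{R,I}(A)$. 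Second, the collapse fails: $(R'_{n,k,l})^{*}$ has $k-l$ bottom returns, which cap pairs of through strands of $\varphi_{n}^{-1}(I)$, so your $F$ has $k$ bottom returns, not just those indexed by $I$. Taking $I'=\varphi_{n}(F)$ (all of them, of size $k\leq k'$) gives $F_{I'}=L(0,n-2k)$ and $R'_{n,k,k'}*_{v}F_{I'}=L(0,n-2k)$, a realizable state with coefficient $1$, so that off-diagonal term survives. (The minimal instance $n=2$, $k=1$, $I=\emptyset$ already exhibits both failures: the diagonal term is a cup over a cap, coefficient $0$, while the term $(R'_{2,1,1},\{1\})$ contributes $1$.) Note also that your ``dimension mismatch'' escape route is empty: whenever $F_{I'}\neq K_{0}$ one automatically has $n_{b}(F_{I'})=n-2|I'|=n_{t}(R'_{n,k,|I'|})$ and $n_{t}(F_{I'})=|n-2k|=n_{b}(R'_{n,k,|I'|})$, so every non-vanishing term is an honest Catalan state and must be killed by non-realizability alone.

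The repair is exactly what the paper does: replace $(R'_{n,k,l})^{*}$ by the \emph{middle} state $M_{n-2l,k'-l}$ of Figure~\ref{fig:pf_lem_roof_R_nk_2}(a), i.e.\ use $F'_{n,k,I}=M_{n-2l,k'-l}*_{v}\varphi_{n}^{-1}(I)$, whose $k'-l$ would-be returns escape to the sides, giving the floor state positive height $k'-l$. Then (i) the bottom returns of $F'_{n,k,I}$ are indexed exactly by $I$, so all terms with $I'\not\preceq I$ die because $(F'_{n,k,I})_{I'}=K_{0}$; (ii) each term with $I'\prec I$ dies by the vertical-line criterion of Theorem~\ref{thm:vh_line_condi}, since $\#(C\cap l^{v}_{k-|I'|})\geq k'-|I'|>k'-l=\mathrm{ht}(C)$; and (iii) the diagonal term $R'_{n,k,l}*_{v}M_{n-2l,k'-l}$ \emph{is} realizable, with monomial coefficient $A^{-(n-2k)(k'-l)}$, which is where the prefactor in the lemma comes from. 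Relatedly, your ``framing identity'' $R_{n,k}*_{v}(R'_{n,k,l})^{*}\simeq A^{(n-2k)(l_{0}-l)}R''_{n,k,l}$ cannot hold: both sides are crossingless diagrams, so no skein relation can produce a monomial factor, and the two diagrams differ (the left-hand side has $k-l$ bottom returns, whereas $R''_{n,k,l}$ is a middle state). The power of $A$ in the statement is not an isotopy or framing factor; it is the reciprocal of the coefficient of the genuine Catalan state $R'_{n,k,l}*_{v}M_{n-2l,k'-l}$, computed via Theorem~\ref{thm:coef_no_bot_rtn}. Your final step (non-vanishing via Lemma~2.2 of \cite{DLP2015} and Theorem~\ref{thm:realizable_coef_non_zero}) is correct and matches the paper.
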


\begin{figure}[ht] 
\centering
\includegraphics[scale=1]{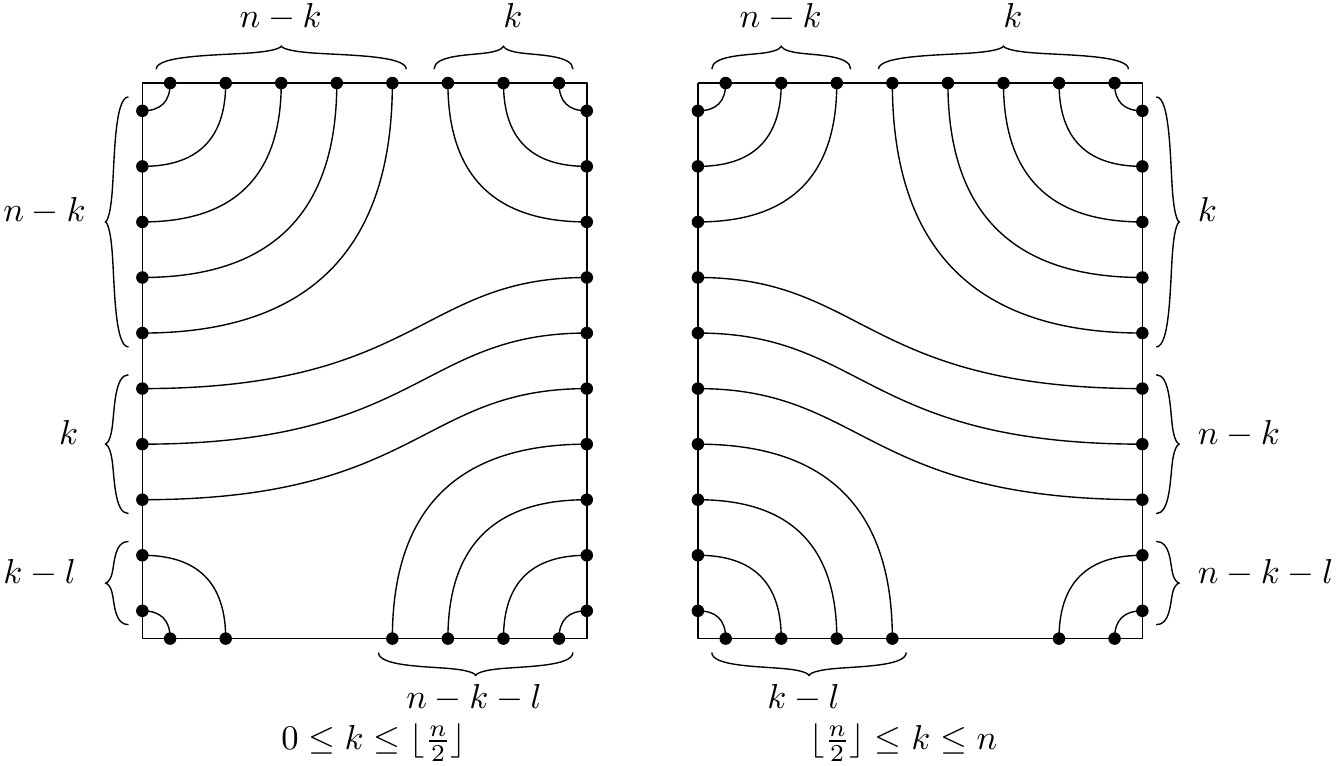}
\caption{Middle states $R''_{n,k,l}$}
\label{fig:lem_roof_R_nk_3}
\end{figure}

\begin{proof}
Given $(R'_{n,k,|I|},I) \in \mathcal{P}$, let $F'_{n,k,I} = M_{n-2|I|,k'-|I|} *_{v} \varphi_{n}^{-1}(I)$, where $M_{n,k}$ is a middle state shown in Figure~\ref{fig:pf_lem_roof_R_nk_2}(a) and $k' = \min\{k,n-k\}$. For each $I' \in \mathcal{L}_{n}$ with $I' \prec I$, define Catalan state $C = R'_{n,k,|I'|} *_{v} (F'_{n,k,I})_{I'}$. Since
\begin{equation*}
\#(C \cap l^{v}_{k-|I'|}) \geq k'-|I'| > k'-|I| = \mathrm{ht}(C) 
\end{equation*}
(see Figure~\ref{fig:pf_lem_roof_R_nk_2}(b) for case $0 \leq k \leq \lfloor \frac{n}{2}\rfloor$), by Theorem~\ref{thm:vh_line_condi}, $C$ is not realizable and consequently 
\begin{equation*}
\Theta_{A}(R'_{n,k,|I'|},I';F'_{n,k,I}) = C(A) = 0.
\end{equation*}
For each $I' \in \mathcal{L}_{n}$ with $I \prec I'$, we see that $(F'_{n,k,I})_{I'} = K_{0}$, so $\Theta_{A}(R'_{n,k,|I'|},I';F'_{n,k,I}) = 0$.
Therefore, formula \eqref{eqn:roof_R_nk} evaluated at the floor state $F'_{n,k,I}$ reduces into
\begin{equation*}
\Theta_{A}(R_{n,k},\emptyset;F'_{n,k,I}) = Z_{R,I}(A) \, \Theta_{A}(R'_{n,k,|I|},I;F'_{n,k,I}).
\end{equation*}
Since
\begin{equation*}
\Theta_{A}(R_{n,k},\emptyset;F'_{n,k,I}) = [[R_{n,k} *_{v} M_{n-2|I|,k'-|I|} *_{v} \varphi _{n}^{-1}(I)]]_{A} = [[R''_{n,k,|I|} *_{v} \varphi_{n}^{-1}(I)]]_{A}
\end{equation*} 
and, as one may verify,
\begin{equation*}
\Theta_{A}(R'_{n,k,|I|},I;F'_{n,k,I}) = [[R'_{n,k,|I|} *_{v} M_{n-2|I|,k'-|I|}]]_{A} = A^{-(n-2k)(k'-|I|)},
\end{equation*} 
it follows that $Z_{R,I}(A)$ is as we claimed.

Since the Catalan state $C' = R''_{n,k,|I|} *_{v} \varphi_{n}^{-1}(I)$ has no returns on three sides, by Lemma~2.2 of \cite{DLP2015}, $C'$ is realizable. Consequently, by Theorem~\ref{thm:realizable_coef_non_zero}, $C'(A) \neq 0$, i.e., $Z_{R,I}(A) \neq 0$.
\end{proof}

\begin{figure}[ht]
\centering
\includegraphics[scale=1]{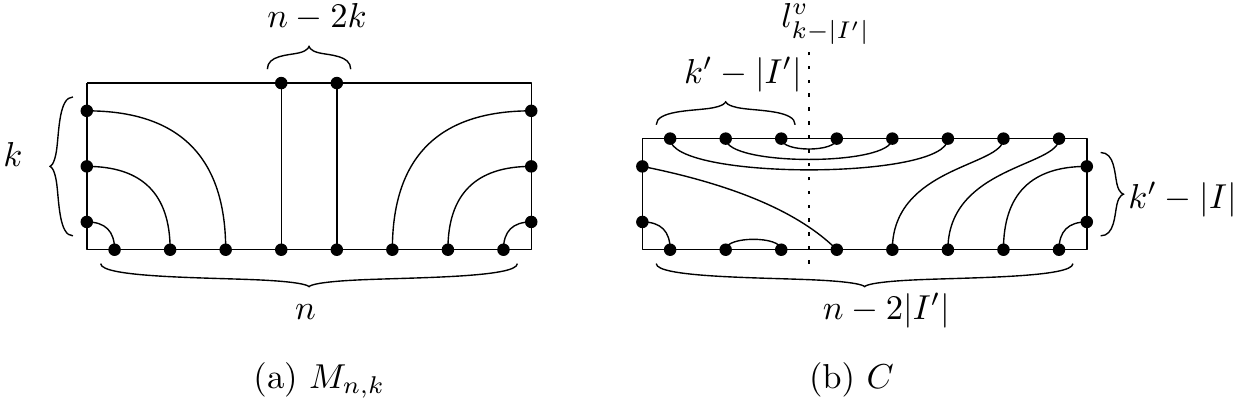}
\caption{Middle state $M_{n,k}$ and Catalan state $C$}
\label{fig:pf_lem_roof_R_nk_2}
\end{figure}

\begin{figure}[ht]
\centering
\includegraphics[scale=1]{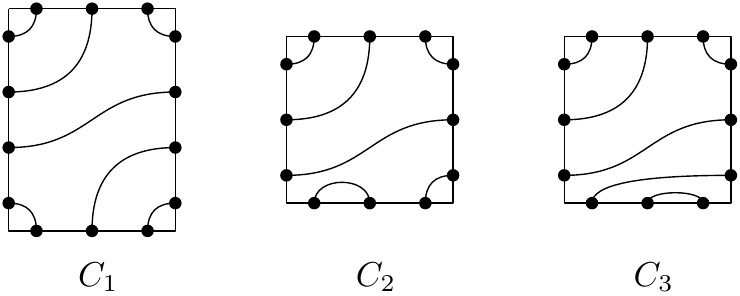}
\caption{Catalan states $C_{1}$, $C_{2}$, and $C_{3}$}
\label{fig:ex_roof_nk_n3}
\end{figure}

\begin{example}
\label{ex:roof_nk_n3}
We find \eqref{eqn:roof_R_nk} for $n = 3$ and $k = 1,2$. When $n = 3$ and $k = 1$, since $\mathcal{L}_{3} = \{\emptyset,\{1\},\{2\}\}$ and $\min\{k,n-k\} = 1$, 
\begin{equation*}
\Theta_{A}(R_{3,1},\emptyset;\cdot) = Z_{R'_{3,1,0},\emptyset}(A) \, \Theta_{A}(R'_{3,1,0},\emptyset;\cdot) + Z_{R'_{3,1,1},\{1\}}(A) \, \Theta_{A}(R'_{3,1,1},\{1\};\cdot) + Z_{R'_{3,1,1},\{2\}}(A) \, \Theta_{A}(R'_{3,1,1},\{2\};\cdot),
\end{equation*}
where 
\begin{equation*}
Z_{R'_{3,1,0},\emptyset}(A) = A^{(3-2)(1-0)} \, C_{1}(A) = A^{-7}+A^{-3}+A,
\end{equation*}
\begin{equation*}
Z_{R'_{3,1,1},\{1\}}(A) = A^{(3-2)(1-1)} \, C_{2}(A) = A^{-5}+A^{-1},
\end{equation*}
\begin{equation*}
Z_{R'_{3,1,1},\{2\}}(A) = A^{(3-2)(1-1)} \, C_{3}(A) =  A^{-3},
\end{equation*}
and coefficients of Catalan states $C_{1}, C_{2}$, and $C_{3}$ shown in Figure~\ref{fig:ex_roof_nk_n3} are computed using Theorem~\ref{thm:coef_no_bot_rtn}.

Analogously, equation \eqref{eqn:roof_R_nk} for $n = 3$ and $k = 2$ is
\begin{equation*}
\Theta_{A}(R_{3,2},\emptyset;\cdot) = Z_{R'_{3,2,0},\emptyset}(A) \, \Theta_{A}(R'_{3,2,0},\emptyset;\cdot) + Z_{R'_{3,2,1},\{1\}}(A) \, \Theta_{A}(R'_{3,2,1},\{1\};\cdot) + Z_{R'_{3,2,1},\{2\}}(A) \, \Theta_{A}(R'_{3,2,1},\{2\};\cdot), 
\end{equation*}
where 
\begin{equation*}
Z_{R'_{3,2,0},\emptyset}(A) = A^{(3-4)(1-0)} \, \overline{C_{1}}(A) = A^{-1} \, C_{1}(A^{-1}) = A^{-1}+A^{3}+A^{7},
\end{equation*}
\begin{equation*}
Z_{R'_{3,2,1},\{1\}}(A) = A^{(3-4)(1-1)} \, \overline{C_{3}}(A) = C_{3}(A^{-1}) = A^{3},
\end{equation*}
and
\begin{equation*}
Z_{R'_{3,2,1},\{2\}}(A) = A^{(3-4)(1-1)} \, \overline{C_{2}}(A) = C_{2}(A^{-1}) = A+A^{5}.
\end{equation*}
Formulas for $\overline{C_{1}}(A)$, $\overline{C_{2}}(A)$, and $\overline{C_{3}}(A)$ above are obtained from $C_{1}(A)$, $C_{2}(A)$, and $C_{3}(A)$ by applying Remark~\ref{rem:prop_coef}(i).
\end{example}

\begin{lemma}
\label{lem:pf_thm_main}
Given $(R,I) \in \mathcal{W}$, assume that there is a finite subset $\mathcal{P}'$ of $\mathcal{W}$ with the following properties
\begin{enumerate}
\item[i)] $\mathcal{P}' \trianglerighteq (R,I)$,
\item[ii)] $(R',I')$ has a $\Theta_{A}$-state expansion for every $(R',I') \in \mathcal{P}'$, and
\item[iii)] there are rational functions $Q_{R',I'}(A)$, $(R',I') \in \mathcal{P}'$, such that
\begin{equation*}
\Theta_{A}(R,I;\cdot) = \sum_{(R',I') \in \mathcal{P}'} Q_{R',I'}(A) \, \Theta_{A}(R',I';\cdot).
\end{equation*}
\end{enumerate}
Then $(R,I)$ also has a $\Theta_{A}$-state expansion.
\end{lemma}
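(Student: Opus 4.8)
The plan is to substitute the $\Theta_{A}$-state expansions provided by hypothesis (ii) into the relation in hypothesis (iii), and then to collect like terms while discarding any pair whose resulting coefficient vanishes. First I would invoke (ii): for each $(R',I') \in \mathcal{P}'$ there is a finite set $\mathcal{E}_{R',I'} \subset \mathcal{W}$, whose pairs all have a middle state as first entry, with $\mathcal{E}_{R',I'} \trianglerighteq (R',I')$ and with nonzero coefficients $Q^{R',I'}_{R'',I''}(A) \in \mathbb{Q}(A)$, such that
\[
\Theta_{A}(R',I';\cdot) = \sum_{(R'',I'') \in \mathcal{E}_{R',I'}} Q^{R',I'}_{R'',I''}(A)\,\Theta_{A}(R'',I'';\cdot).
\]
Substituting each of these into the relation in (iii) rewrites $\Theta_{A}(R,I;\cdot)$ as a double sum indexed by the finite set $\mathcal{Q} = \bigcup_{(R',I') \in \mathcal{P}'} \mathcal{E}_{R',I'}$.

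Next I would collect like terms: for each $(R'',I'') \in \mathcal{Q}$ set
\[
\widetilde{Q}_{R'',I''}(A) = \sum_{\substack{(R',I') \in \mathcal{P}' \\ (R'',I'') \in \mathcal{E}_{R',I'}}} Q_{R',I'}(A)\,Q^{R',I'}_{R'',I''}(A) \in \mathbb{Q}(A),
\]
so that $\Theta_{A}(R,I;\cdot) = \sum_{(R'',I'') \in \mathcal{Q}} \widetilde{Q}_{R'',I''}(A)\,\Theta_{A}(R'',I'';\cdot)$. Two of the three requirements of Definition~\ref{def:Theta_state_expansion} are then immediate. Every first entry occurring in $\mathcal{Q}$ is a middle state, since each $(R'',I'')$ lies in some $\mathcal{E}_{R',I'}$. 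Moreover, for such a pair $(R'',I'') \trianglerighteq (R',I')$, while $(R',I') \trianglerighteq (R,I)$ by hypothesis (i); transitivity of $\trianglelefteq$ (Lemma~\ref{lem:transitivity_triangle}) then yields $(R'',I'') \trianglerighteq (R,I)$, i.e., $\mathcal{Q} \trianglerighteq (R,I)$.

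The one point requiring care---and the main, though minor, obstacle---is the nonvanishing of the coefficients demanded by Definition~\ref{def:Theta_state_expansion}: collecting like terms can produce cancellation, so some $\widetilde{Q}_{R'',I''}(A)$ may equal zero. This is handled by simply deleting those pairs. Setting $\mathcal{P}'' = \{(R'',I'') \in \mathcal{Q} : \widetilde{Q}_{R'',I''}(A) \neq 0\}$ leaves the value of the sum unchanged, so
\[
\Theta_{A}(R,I;\cdot) = \sum_{(R'',I'') \in \mathcal{P}''} \widetilde{Q}_{R'',I''}(A)\,\Theta_{A}(R'',I'';\cdot),
\]
where $\mathcal{P}'' \subseteq \mathcal{Q}$ still consists of middle-state pairs and satisfies $\mathcal{P}'' \trianglerighteq (R,I)$, while now every coefficient is a nonzero element of $\mathbb{Q}(A)$. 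Hence this relation is a $\Theta_{A}$-state expansion for $(R,I)$. (Should all combined coefficients cancel, then $\mathcal{P}'' = \emptyset$ and $\Theta_{A}(R,I;\cdot) = 0$, which is still a valid expansion.)
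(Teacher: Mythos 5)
Your proposal is correct and follows essentially the same route as the paper's proof: substitute the expansions from hypothesis (ii) into the relation in (iii), use Lemma~\ref{lem:transitivity_triangle} to propagate $\trianglerighteq (R,I)$ through the union of the expansion sets, collect like terms, and discard the pairs whose combined coefficient vanishes. The only cosmetic difference is that the paper extends the coefficients by zero over the full union before collecting terms, whereas you sum only over the expansion sets actually containing each pair; these are the same computation, and your remark about the possibly empty resulting set is a harmless addition.
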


\begin{proof}
For each $(R',I') \in \mathcal{P}'$, let
\begin{equation*}
\Theta_{A}(R',I';\cdot) = \sum_{(\tilde{R}',\tilde{I}') \in \tilde{\mathcal{P}}'_{R',I'}} Q'_{(R',I'),(\tilde{R}',\tilde{I}')}(A) \, \Theta_{A}(\tilde{R}',\tilde{I}';\cdot)
\end{equation*}
be its $\Theta_{A}$-state expansion. Then clearly $\tilde{R}'$ in each pair
\begin{equation*}
(\tilde{R}',\tilde{I}')\in \tilde{\mathcal{S}}' = \bigcup_{(R',I') \in \mathcal{P}'} \tilde{\mathcal{P}}'_{R',I'}
\end{equation*}
is a middle state. Moreover, since $\tilde{\mathcal{P}}'_{R',I'} \trianglerighteq (R',I')$ and $\mathcal{P}' \trianglerighteq (R,I)$, by Lemma~\ref{lem:transitivity_triangle}, $\tilde{\mathcal{P}}'_{R',I'} \trianglerighteq (R,I)$ for every $(R',I') \in \mathcal{P}'$ and consequently, $\tilde{\mathcal{S}}' \trianglerighteq (R,I)$. Furthermore, for each $(\tilde{R}',\tilde{I}') \in \tilde{\mathcal{S}}'$ define
\begin{equation*}
\tilde{Q}'_{(R',I'),(\tilde{R}',\tilde{I}')}(A) = 
\begin{cases}
Q'_{(R',I'),(\tilde{R}',\tilde{I}')}(A), & \text{if} \ (\tilde{R}',\tilde{I}') \in \tilde{\mathcal{P}}'_{R',I'}, \\
0, & \text{otherwise}, 
\end{cases}
\end{equation*}
then
\begin{eqnarray*}
\Theta_{A}(R,I;\cdot) &=& \sum_{(R',I') \in \mathcal{P}'} \, \sum_{(\tilde{R}',\tilde{I}') \in \tilde{\mathcal{P}}'_{R',I'}} Q_{R',I'}(A) \, Q'_{(R',I'),(\tilde{R}',\tilde{I}')}(A) \, \Theta_{A}(\tilde{R}',\tilde{I}';\cdot) \\
&=& \sum_{(R',I') \in \mathcal{P}'} \, \sum_{(\tilde{R}',\tilde{I}') \in \tilde{\mathcal{S}}'} Q_{R',I'}(A) \, \tilde{Q}'_{(R',I'),(\tilde{R}',\tilde{I}')}(A) \, \Theta_{A}(\tilde{R}',\tilde{I}';\cdot) \\
&=& \sum_{(\tilde{R}',\tilde{I}') \in \tilde{\mathcal{S}}'} \tilde{Q}_{\tilde{R}',\tilde{I}'}(A) \, \Theta_{A}(\tilde{R}',\tilde{I}';\cdot),
\end{eqnarray*}
where
\begin{equation*}
\tilde{Q}_{\tilde{R}',\tilde{I}'}(A) = \sum_{(R',I') \in \mathcal{P}'} Q_{R',I'}(A) \, \tilde{Q}'_{(R',I'),(\tilde{R}',\tilde{I}')}(A)
\end{equation*}
are rational functions. Finally, if we let
\begin{equation*}
\mathcal{S}' = \{(\tilde{R}',\tilde{I}') \in \tilde{\mathcal{S}}' \mid \tilde{Q}_{\tilde{R}',\tilde{I}'}(A) \neq 0\}, 
\end{equation*}
then
\begin{equation*}
\Theta_{A}(R,I;\cdot) = \sum_{(R',I') \in \mathcal{S}'} \tilde{Q}_{R',I'}(A) \, \Theta_{A}(R',I';\cdot)
\end{equation*}
is a $\Theta_{A}$-state expansion for $(R,I)$.
\end{proof}

\begin{theorem}
\label{thm:main}
Every $(R,I) \in \mathcal{W}$ has a $\Theta_{A}$-state expansion.
\end{theorem}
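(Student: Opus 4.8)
The plan is to argue by induction on the number of top returns of $R$, which I will write $\tau(R)$, with \textbf{middle states as the base case}. The device that converts a single reduction step into a genuine $\Theta_A$-state expansion is Lemma~\ref{lem:pf_thm_main}: to produce an expansion for $(R,I)$ it suffices to exhibit a finite collection $\mathcal{P}' \trianglerighteq (R,I)$, each member of which already has a $\Theta_A$-state expansion, together with rational functions writing $\Theta_A(R,I;\cdot)$ as a combination of the $\Theta_A(R',I';\cdot)$. Transitivity of $\trianglelefteq$ (Lemma~\ref{lem:transitivity_triangle}) guarantees that the bookkeeping condition $\mathcal{P}' \trianglerighteq (R,I)$ survives composition of such steps. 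For the base case, if $\tau(R)=0$ then $R$ is a middle state and $\Theta_A(R,I;\cdot)=1\cdot\Theta_A(R,I;\cdot)$ is already a $\Theta_A$-state expansion: take $\mathcal{P}'=\{(R,I)\}$, which is admissible because $\trianglelefteq$ is reflexive ($I\ominus I=\emptyset\in\mathcal{L}_{n_t(R)}$, while conditions b) and c) hold with equality).

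For the inductive step, suppose $\tau(R)\ge 1$ and that every pair whose roof state has fewer than $\tau(R)$ top returns already has a $\Theta_A$-state expansion. The goal is to produce a relation
\[
\Theta_A(R,I;\cdot)=\sum Q_{R',I'}(A)\,\Theta_A(R',I';\cdot)
\]
over pairs $(R',I')\trianglerighteq(R,I)$ with $\tau(R')<\tau(R)$; the inductive hypothesis and Lemma~\ref{lem:pf_thm_main} then finish the step. To manufacture such a relation I would single out a top return of $R$ and invoke the special family of Lemma~\ref{lem:roof_R_nk_1}, which expresses the top-return-free middle state $R_{n,k}$ as a combination of the top states $R'_{n,k,l}$, with \emph{every} coefficient $Z_{R,I}(A)$ nonzero by Lemma~\ref{lem:roof_R_nk_2}. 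Solving that identity for the summand carrying the \emph{most} top returns (the $l=0$ state $R'_{n,k,0}$) rewrites it in terms of $R_{n,k}$ (a middle state, hence $\tau=0$ and covered by the base case regardless of its height) and the states $R'_{n,k,l}$ with $l\ge 1$ (for which $\tau=k-l<k$) --- that is, purely in terms of pairs with strictly fewer top returns. The remaining task is to transport this local identity to the actual $R$: the operations of Proposition~\ref{prop:P_prime_concat_reflect_formula} are designed for exactly this, concatenating the part of $R$ below the chosen return as a middle state via part ii), shifting the bottom-index set by $\oplus J$ via part i), and passing to the mirror configuration via the reflection formula in part iii); Proposition~\ref{prop:1st_row_exp} (first-row expansion) is available to strip any extra rows so that the chosen return is exposed. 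Each of these operations leaves the top-return count of the non-reduced pieces unchanged and, by the compatibility of $\trianglelefteq$ with them asserted in Proposition~\ref{prop:P_prime_concat_reflect_formula}, keeps every output pair $\trianglerighteq(R,I)$.

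The main obstacle is precisely this transport step. A general roof state does not literally coincide with any $R'_{n,k,0}$, because its top returns are interspersed with its through-strands rather than grouped together as in the normal-form family of Lemma~\ref{lem:roof_R_nk_1}. The real work will be to choose the return, the parameters $(n,k)$, and a decomposition of $R$ into middle-state pieces so that the lemma becomes applicable \emph{after} the operations of Proposition~\ref{prop:P_prime_concat_reflect_formula}, and then to check that conditions a)--c) defining $\trianglelefteq$ are preserved at each stage so that the resulting collection is genuinely $\trianglerighteq(R,I)$. Once this matching-and-bookkeeping is carried out, induction on $\tau(R)$ together with Lemma~\ref{lem:pf_thm_main} yields a $\Theta_A$-state expansion for every $(R,I)\in\mathcal{W}$.
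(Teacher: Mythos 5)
There is a genuine gap, and it sits exactly where you placed your disclaimer: the ``transport step'' you defer is the heart of the theorem, and the tools you reserve for it cannot perform it. Proposition~\ref{prop:P_prime_concat_reflect_formula} only lets you glue a middle state \emph{below} a roof state (part ii), modify the index sets by $\oplus J$ (part i), or reflect (part iii); none of these operations can regroup the top returns of a general top state, which are interspersed with its through-strands, into the normal form $R'_{n,k,l}$ of Lemma~\ref{lem:roof_R_nk_1}. Likewise, the first-row expansion of Proposition~\ref{prop:1st_row_exp} applied to $R$ itself does not ``expose'' a chosen return: it replaces $\Theta_{A}(R,I;\cdot)$ by a sum over \emph{many} states of smaller height, not by a single normal-form state. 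The paper's actual device is the opposite maneuver, which your proposal never mentions: for a top state $R$ with $j = \min\mathcal{J}(R)$, form $\tilde{R} = \psi_{n}^{-1}(\{j\}) *_{v} R$ by gluing a one-row state with a single cap \emph{on top} of $R$, apply the first-row expansion to $(\tilde{R},I)$ so that $(R,I)$ appears as one of its terms (equation~\eqref{eqn:pf_thm_main_2}), and then solve for $\Theta_{A}(R,I;\cdot)$.

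Moreover, your induction measure is incompatible with this mechanism, so the gap cannot be patched while inducting on $\tau(R)$ alone. In \eqref{eqn:pf_thm_main_2} the terms $(\tilde{R}^{\{k\}},I)$ with $k \in \mathcal{J}(\tilde{R}) \setminus \{j\}$ have the \emph{same} number of top returns as $R$: stripping the cap $e_{k}$, which comes from a top return of $R$, is exactly offset by the cap $e_{j}$ that was added to form $\tilde{R}$. Worse, the auxiliary pair $(\tilde{R},I)$ whose expansion one must already possess has one \emph{more} top return than $R$. This is precisely why the paper runs a double induction: an outer induction on $n_{t}(R)$ (handling the terms where a bottom cap is created, since those have smaller $n_{t}$), and an inner induction on the ``distance to normal form'' $q(R)$, which strictly decreases on the same-$n_{t}$, same-$\tau$ terms above and vanishes exactly on $L(0,n)$ and $R'_{n,j,0}$. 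Your $\tau$-induction does work for the normal-form family itself, where it parallels the paper's $q=0$ case (there $\tau(R'_{n,k,l}) = \min\{k,n-k\}-l$, so solving Lemma~\ref{lem:roof_R_nk_1} for the $l=0$ term, with denominator nonzero by Lemma~\ref{lem:roof_R_nk_2}, does lower $\tau$), and the ingredients you assemble around it --- middle states as base case, Lemma~\ref{lem:pf_thm_main} plus transitivity as the gluing device, Proposition~\ref{prop:P_prime_concat_reflect_formula}(ii) to reduce to top states --- all match the paper. But without the $\psi_{n}^{-1}(\{j\})$-trick and a termination measure of the type $q(R)$, the induction does not close on any top state whose returns are genuinely interspersed, which is the generic case.
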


\begin{proof}
Our proof is by induction on $n = n_{t}(R)$. For $n = 0,1$ or when $R$ is a middle state, the statement is obvious since we can take $\mathcal{P}' = \{(R,I)\}$ and $Q_{R,I}(A) = 1$ in \eqref{eqn:Theta_state_expansion}.

Assume that \eqref{eqn:Theta_state_expansion} exists for all roof states with $n_{t}(R) < n$ and arbitrary $I \in \mathrm{Fin}(\mathbb{N})$. We notice that, if there exists a $\Theta_{A}$-state expansion for $(R,I)$ then Proposition~\ref{prop:P_prime_concat_reflect_formula}(ii) yields a $\Theta_{A}$-state expansion for $(R *_{v} M,I)$ for any middle state $M$ with $n_{t}(M) = n_{b}(R)$. Thus, it suffices to consider cases when $R$ is a top state.

For a top state $R$ with $n_{t}(R) = n$, define
\begin{equation*}
q(R) = 
\begin{cases}
\min\{j,n-j\}-u + n-v, & \text{if} \ \mathcal{J}(R) \neq \emptyset, \\
0, & \text{otherwise}, 
\end{cases}
\end{equation*}
where $j = j(R) = \min \mathcal{J}(R)$, $u = u(R) = |\{i \in \varphi_{n}(\overline{R}^{*}) \mid i \leq j\}|$, and $v = v(R) = \min (\mathcal{J}(R) \cup \{n\} \setminus \{j\})$ (see Figure~\ref{fig:pf_thm_main}). Notice that $u \leq \min\{j,n-j\}$ and $v \leq n$, so $q(R) \geq 0$. Moreover, $q(R) = 0$ if and only if $R = L(0,n)$ or $R = R'_{n,j,0}$, where $R'_{n,j,0}$ is shown in Figure~\ref{fig:lem_roof_R_nk_2}. Clearly, $R = L(0,n)$ is a middle state, so $\mathcal{J}(R) = \emptyset$ and thus $q(R) = 0$. For $R = R'_{n,j,0}$, we see that $u = \min\{j,n-j\}$ and $v = n$, so $q(R) = 0$ as well. Conversely, if $q(R) = 0$ and $R$ is not a middle state, then $0 \leq \min\{j,n-j\}-u = v-n \leq 0$. Thus, it must be $\min\{j,n-j\} = u$ and $v = n$. Since $v = n$ if and only if $\mathcal{J}(R) = \{j\}$, and $\min\{j,n-j\} = u$ if and only if arcs joining top-boundary points with bottom-boundary points of $R$ are all either to the left or to the right of the $u$ parallel copies of $e_{j}$, it follows that $R = R'_{n,j,0}$. If $R$ is a top state which is also a middle state, then clearly $R =  L(0,n)$.

\begin{figure}[ht]
\centering
\includegraphics[scale=1]{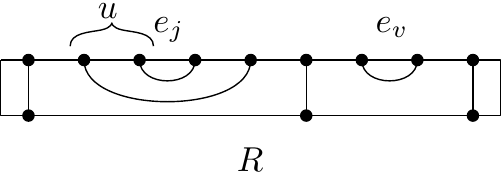}
\caption{$j = j(R)$, $u = u(R)$, and $v = v(R)$ for top state $R$ with $\mathcal{J}(R) \neq \emptyset$}
\label{fig:pf_thm_main}
\end{figure}

We use induction on $q = q(R)$ to show that for any top state $R$ with $n_{t}(R) = n$ and arbitrary $I \in \mathrm{Fin}(\mathbb{N})$, there is a $\Theta_{A}$-state expansion for $(R,I)$. 

If $q = 0$, then as we argued above $R = L(0,n)$ or $R = R'_{n,j,0}$ for some $j$. If $R = L(0,n)$, then $R$ is a middle state, so $(R,I)$ has a $\Theta_{A}$-state expansion. For $R = R'_{n,j,0}$, we see that by Lemma~\ref{lem:roof_R_nk_1} and Proposition~\ref{prop:P_prime_concat_reflect_formula}(i),
\begin{equation}
\label{eqn:pf_thm_main_1}
\Theta_{A}(R_{n,j},I;\cdot) = Z_{R'_{n,j,0},\emptyset}(A) \, \Theta_{A}(R,\emptyset \oplus I;\cdot) + \sum_{(R',I') \in \mathcal{P}'} Z_{R',I'}(A) \, \Theta_{A}(R',I' \oplus I;\cdot),
\end{equation}
where 
\begin{equation*}
\mathcal{P}' = \{(R'_{n,j,|J|},J) \mid J \in \mathcal{L}_{n}, \, 0 < |J| \leq \min\{j,n-j\}\} \trianglerighteq (R_{n,j},\emptyset)
\end{equation*} 
and coefficients $Z_{R',I'}(A)$ are given in Lemma~\ref{lem:roof_R_nk_2}. Let
\begin{equation*}
\tilde{\mathcal{P}}' = \{(R_{n,j},I)\} \cup \{(R',I' \oplus I) \mid (R',I') \in \mathcal{P}'\},
\end{equation*}
then by Proposition~\ref{prop:P_prime_concat_reflect_formula}(i), $\tilde{\mathcal{P}}' \trianglerighteq (R_{n,j},I)$. Since $n_{t}(R) = n_{t}(R_{n,j})$ and $n_{b}(R) = n_{b}(R_{n,j})$, it follows that $\tilde{\mathcal{P}}' \trianglerighteq (R,I)$. Moreover, each $(\tilde{R}',\tilde{I}') \in \tilde{\mathcal{P}}'$ has a $\Theta_{A}$-state expansion since either $\tilde{R}' = R_{n,j}$ is a middle state or $n_{t}(\tilde{R}') < n$ in which case induction can be applied to $(\tilde{R}',\tilde{I}')$. Furthermore, after solving \eqref{eqn:pf_thm_main_1} for $\Theta_{A}(R,I;\cdot)$,
\begin{equation*}
\Theta_{A}(R,I;\cdot) = \frac{1}{Z_{R'_{n,j,0},\emptyset}(A)} \, \Big( \Theta_{A}(R_{n,j},I;\cdot)-\sum_{(R',I') \in \mathcal{P}'} Z_{R',I'}(A) \, \Theta_{A}(R',I' \oplus I;\cdot) \Big).
\end{equation*}
Therefore, $\tilde{\mathcal{P}}'$ satisfies all assumptions of Lemma~\ref{lem:pf_thm_main}, consequently $(R,I)$ has a $\Theta_{A}$-state expansion.

Given $q > 0$, assume that for any top state $R$ with $n_{t}(R) = n$ and $q(R) < q$, and arbitrary $I \in \mathrm{Fin}(\mathbb{N})$, there is a $\Theta_{A}$-state expansion for $(R,I)$. For a top state $R$ with $n_{t}(R) = n$ and $q(R) = q$, let $\tilde{R} = \psi_{n}^{-1}(\{j\}) *_{v} R$ and let
\begin{equation*}
\mathcal{P}' = \{(\tilde{R}^{\tilde{J}},\tilde{I} \oplus I) \mid (\tilde{J},\tilde{I}) \in \mathcal{H}(\tilde{R}) \setminus (\{j\},\emptyset)\}.
\end{equation*} 

We show that every $(R',I') \in \mathcal{P}'$ has a $\Theta_{A}$-state expansion. If $(R',I') \in \mathcal{P}'$ with $n_{t}(R') = n$ then $R' = \tilde{R}^{J'}$ for some $J' = \{k\}$, where $k \in \mathcal{J}(\tilde{R}) \setminus \{j\}$, i.e., $k \geq v+1$. Since $j(R') = j+1$, $u(R') = u+1$, and $v(R') \geq v+1$, it follows that $q(R') < q$. Therefore, for each pair $(R',I') \in \mathcal{P}'$ either $n_{t}(R') < n$ or $n_{t}(R') = n$ and $q(R') < q$, hence either by induction on $n$ or on $q$, each $(R',I')$ has a $\Theta_{A}$-state expansion. In particular, we proved that $(R,I) \notin \mathcal{P}'$ and since $(R,I) = (\tilde{R}^{\{j\}},\emptyset \oplus I)$, by \eqref{eqn:1st_row_exp_over_P}, it follows that
\begin{equation}
\label{eqn:pf_thm_main_2}
\Theta_{A}(\tilde{R},I;\cdot) = A^{-n+2j} \, \Theta_{A}(R,I;\cdot) + \sum_{(R',I') \in \mathcal{P}'} Z_{R',I'}(A) \, \Theta_{A}(R',I';\cdot).
\end{equation}

Define $\delta: \mathbb{Z} \to \{0,1\}$ by $\delta(x) = 1$ if $x > 0$ and $\delta(x) = 0$ otherwise. We show that $(\tilde{R},I)$ has a $\Theta_{A}$-state expansion. Let $\tilde{R} = \tilde{R}' *_{v} \tilde{M}'$, where $\tilde{R}'$ is a top state and $\tilde{M}'$ is a middle state. Then clearly $j(\tilde{R}') = j$, $u(\tilde{R}') = u+\delta(\min\{j,n-j\}-u)$, and $v(\tilde{R}') = v+\delta(n-v)$. One checks that $u(\tilde{R}') + v(\tilde{R}') > u + v$ and consequently $q(\tilde{R}') < q$. By induction $(\tilde{R}',I)$ has a $\Theta_{A}$-state expansion, so Proposition~\ref{prop:P_prime_concat_reflect_formula}(ii) implies that there is a $\Theta_{A}$-state expansion for $(\tilde{R},I)$.

Let $\tilde{\mathcal{P}}' = \{(\tilde{R},I)\} \cup \mathcal{P}'$. Since $\mathcal{P}' \trianglerighteq (\tilde{R},I)$ by Proposition~\ref{prop:1st_row_exp}, $n_{t}(R) = n_{t}(\tilde{R})$, and $n_{b}(R) = n_{b}(\tilde{R})$, it follows that $\tilde{\mathcal{P}}' \trianglerighteq (R,I)$. Moreover, as we showed above, each $(\tilde{R}',\tilde{I}') \in \tilde{\mathcal{P}}'$ has a $\Theta_{A}$-state expansion and, using \eqref{eqn:pf_thm_main_2},
\begin{equation*}
\Theta_{A}(R,I;\cdot) = A^{n-2j} \Big( \Theta_{A}(\tilde{R},I;\cdot) - \sum_{(R',I') \in \mathcal{P}'} Z_{R',I'}(A) \, \Theta_{A}(R',I';\cdot) \Big).
\end{equation*}
Therefore, $\tilde{\mathcal{P}}'$ satisfies all assumptions of Lemma~\ref{lem:pf_thm_main} and consequently $(R,I)$ has a $\Theta_{A}$-state expansion.
\end{proof}

The proof of Theorem~\ref{thm:main} yields an algorithm for finding $\Theta_{A}$-state expansion for $(R,I)$.

\begin{algorithm}
\caption{$\Theta_{A}$-state expansion for $(R,I) \in \mathcal{W}$}
\label{alg:Theta_state_expansion}
\begin{algorithmic}[1]
\Procedure{Theta}{$R,I$}
\State {$n \gets n_{t}(R)$}
\If {\textbf{not} $\#(R\cap l_{i}^h)\leq n$ for all $i$} 
    \State {\Return $0$}
\ElsIf {$R$ is a middle state}
    \State {\Return $\Theta_{A}(R,I;\cdot)$}
\ElsIf {$\mathrm{ht}(R) > 0$} 
    \State {$(R,M) \gets R/l^{h}_{0}$}
    \State {$\sum_{i} Q_{i}(A) \, \Theta_{A}(R_{i},I_{i};\cdot) \gets \textsc{Theta}(R,I)$}
    \State {\Return $\sum_{i} Q_{i}(A) \, \Theta_{A}(R_{i} *_{v} M,I_{i};\cdot)$} 
\EndIf  
\State {$j \gets \min\mathcal{J}(R)$} 
\If {$R = R'_{n,j,0}$} 
    \State {$\mathrm{ExpSum} \gets \Theta_{A}(R_{n,j},I;\cdot)$}
    \For {$I' \in \mathcal{L}_{n}$ with $0 < |I'| \leq \min\{j,n-j\}$} 
        \State {$Z(A) \gets Z_{R'_{n,j,|I'|},I'}(A)$ from Lemma~\ref{lem:roof_R_nk_2}} 
        \State {$\mathrm{ExpSum} \gets \mathrm{ExpSum} - Z(A) \cdot \textsc{Theta}(R'_{n,j,|I'|},I' \oplus I)$}
    \EndFor
    \State {$Z(A) \gets Z_{R'_{n,j,0},\emptyset}(A)$ from Lemma~\ref{lem:roof_R_nk_2}} 
    \State {\Return $(Z(A))^{-1} \cdot \mathrm{ExpSum}$} 
\Else 
    \State {$R \gets \psi_{n}^{-1}(\{j\}) *_{v} R$}
    \State {$\mathrm{ExpSum} \gets \textsc{Theta}(R,I)$}
    \For {$(J',I') \in \mathcal{H}(R) \setminus \{(\{j\},\emptyset)\}$} 
        \State {$\mathrm{ExpSum} \gets \mathrm{ExpSum} - A^{-n+2\Vert{J'}\Vert-2\Vert{I'}\Vert} \cdot \textsc{Theta}(R^{J'},I' \oplus I)$}
    \EndFor
    \State {\Return $A^{n-2j} \cdot \mathrm{ExpSum}$}
\EndIf
\EndProcedure
\end{algorithmic}
\end{algorithm}

\begin{remark}
\label{rem:h_line_condi}
In line~3 of Algorithm~\ref{alg:Theta_state_expansion}, we check if a roof state $R$ satisfies the \emph{horizontal-line conditions}, i.e., $\#(R \cap l_{i}^h)\leq n_{t}(R)$ for all $i = 1,2,\ldots,\mathrm{ht}(R)$. This allows us to deal with trivial cases more efficiently. Indeed, by Theorem~\ref{thm:vh_line_condi}, if $R$ does not satisfy this condition, then $\Theta_{A}(R,I;F) = 0$ for any floor state $F$. In particular, if $n_{b}(R) > n_{t}(R)$ then $\Theta_{A}(R,I;\cdot) \equiv 0$.
\end{remark}

\begin{example}
\label{ex:Theta_state_expansion_2}
Let $R$ be the roof state in Figure~\ref{fig:ex_Theta_state_expansion_2}. We apply Algorithm~\ref{alg:Theta_state_expansion} to find a $\Theta_{A}$-state expansion for $(R,\emptyset)$ first and then use it to compute $C(A)$ for the Catalan state $C = R *_{v} F$, where $C$ and $F$ are shown in Figure~\ref{fig:ex_Theta_state_expansion_2}. Let $M = (M_{2,1})^{*}$ be a $\pi$-rotation of the middle state $M_{2,1}$ shown in Figure~\ref{fig:pf_lem_roof_R_nk_2}(a). Using \eqref{eqn:1st_row_exp} for $(R'_{4,1,0} *_{v} M,\emptyset)$, where top state $R'_{n,k,l}$ is shown in Figure~\ref{fig:lem_roof_R_nk_2}, we see that
\begin{equation*}
\Theta_{A}(R,\emptyset;\cdot) = A^{2} \, \Theta_{A}(R'_{4,1,0} *_{v} M,\emptyset;\cdot) - A^{6} \, \Theta_{A}(R'_{4,2,0},\emptyset;\cdot) - A^{4} \, \Theta_{A}(R'_{2,1,0},\{2\};\cdot) - A^{2} \, \Theta_{A}(R'_{2,1,0},\{3\};\cdot),
\end{equation*}
For a positive integer $n$, let $[n]_{q} =1+q+\cdots+q^{n-1}$ and let $[n] = [n]_{q = A^{4}}$. By Lemma~\ref{lem:roof_R_nk_1}, Lemma~\ref{lem:roof_R_nk_2}, and Proposition~\ref{prop:P_prime_concat_reflect_formula}(ii),
\begin{eqnarray*}
\Theta_{A}(R'_{4,1,0} *_{v} M,\emptyset;\cdot) &=& \frac{A^{14}}{[4]} \, \Theta_{A}(R_{4,1} *_{v} M,\emptyset;\cdot) - \frac{A^{2}[3]}{[4]} \, \Theta_{A}(R'_{4,1,1} *_{v} M,\{1\};\cdot) \\ 
&-& \frac{A^{4}[2]}{[4]} \, \Theta_{A}(R'_{4,1,1} *_{v} M,\{2\};\cdot) - \frac{A^{6}}{[4]} \, \Theta_{A}(R'_{4,1,1} *_{v} M,\{3\};\cdot)
\end{eqnarray*}
and
\begin{eqnarray*}
\Theta_{A}(R'_{4,2,0},\emptyset;\cdot) &=& \frac{A^{8}[2]}{[4][3]} \, \Theta_{A}(R_{4,2},\emptyset;\cdot) - \frac{A^{4}[2]}{[4]} \, \Theta_{A}(R'_{4,2,1},\{1\};\cdot) - \frac{A^{2}[2]^{2}}{[4]} \, \Theta_{A}(R'_{4,2,1},\{2\};\cdot) \\
&-& \frac{A^{4}[2]}{[4]} \, \Theta_{A}(R'_{4,2,1},\{3\};\cdot) - \frac{A^{8}[2]}{[4][3]} \, \Theta_{A}(R'_{4,2,2},\{1,2\};\cdot) - \frac{A^{6}[2]^{2}}{[4][3]} \, \Theta_{A}(R'_{4,2,2},\{1,3\};\cdot),
\end{eqnarray*}
where roof states $R_{n,k}$ are shown in Figure~\ref{fig:lem_roof_R_nk_1}.
Moreover, by Lemma~\ref{lem:roof_R_nk_1}, Lemma~\ref{lem:roof_R_nk_2}, and Proposition~\ref{prop:P_prime_concat_reflect_formula}(i),
\begin{equation*}
\Theta_{A}(R'_{2,1,0},I;\cdot) = \frac{A^{2}}{[2]} \, \Theta_{A}(R_{2,1},I;\cdot) - \frac{A^{2}}{[2]} \, \Theta_{A}(R'_{2,1,1},\{1\} \oplus I;\cdot)
\end{equation*}
for $I = \{1\},\{2\},\{3\}$. Since $R'_{4,1,1} *_{v} M = M$, $R'_{4,2,1} = R'_{2,1,0}$, and $R'_{4,2,2} = R'_{2,1,1} = R_{0,0}$, it follows that
\begin{eqnarray*}
\Theta_{A}(R,\emptyset;\cdot) 
&=& \frac{A^{16}}{[4]} \, \Theta_{A}(R_{4,1} *_{v} M,\emptyset;\cdot) - \frac{A^{4}[3]}{[4]} \, \Theta_{A}(M,\{1\};\cdot) - \frac{A^{6}[2]}{[4]} \, \Theta_{A}(M,\{2\};\cdot) - \frac{A^{8}}{[4]} \, \Theta_{A}(M,\{3\};\cdot)\\ 
&-&  \frac{A^{14}[2]}{[4][3]} \, \Theta_{A}(R_{4,2},\emptyset;\cdot) + \frac{A^{12}}{[4]} \, \Theta_{A}(R_{2,1},\{1\};\cdot) + \frac{A^{6}(A^{4}-1)}{[4]} \, \Theta_{A}(R_{2,1},\{2\};\cdot) \\ 
&-& \frac{A^{4}}{[4]} \, \Theta_{A}(R_{2,1},\{3\};\cdot) + \frac{A^{6}}{[3][2]} \, \Theta_{A}(R_{0,0},\{1,2\};\cdot) + \frac{A^{4}}{[3]} \, \Theta_{A}(R_{0,0},\{1,3\};\cdot).
\end{eqnarray*}

We see that $F_{\{2\}} = F_{\{1,2\}} = K_{0}$ and
\begin{equation*}
\Theta_{A}(R_{4,1} *_{v} M,\emptyset;F) = A^{-24} \, [2]^{3} [3]^{2}, \quad
\Theta_{A}(R_{4,2},\emptyset;F) = A^{-14} \, [2]^{2} [3]^{2}, 
\end{equation*}
\begin{equation*}
\Theta_{A}(R_{2,1},\{1\};F) = \Theta_{A}(M,\{1\};F) = \Theta_{A}(R_{2,1},\{3\};F) = \Theta_{A}(M,\{3\};F) = A^{-4} \, [2], 
\end{equation*}
by Theorem~\ref{thm:coef_no_bot_rtn} and $\Theta_{A}(R_{0,0},\{1,3\};F) = 0$ by Theorem~\ref{thm:vh_line_condi},
hence
\begin{equation*}
C(A) = \Theta_{A}(R,\emptyset;F) = \frac{A^{-8} \, [2]^{2}(1 + 3A^{4} + 2A^{8} + 3A^{12} + A^{16})}{[4]} = A^{-8}(1+A^{4})(1+3A^{4}+A^{8}).
\end{equation*}
\end{example}

\begin{figure}[ht]
\centering
\includegraphics[scale=1]{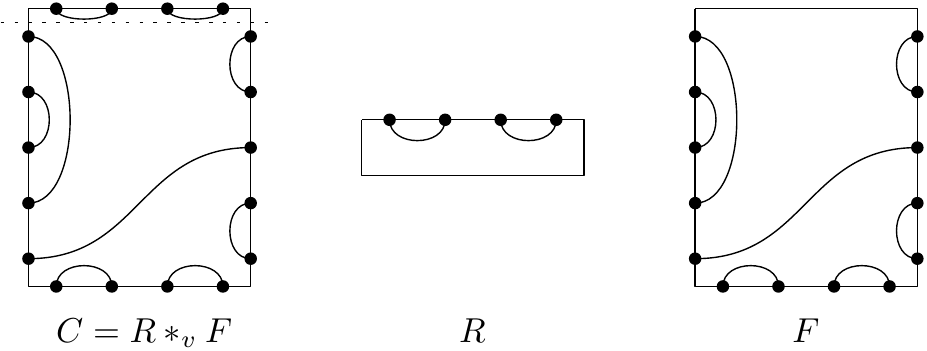}
\caption{Catalan state $C$, roof state $R$, and floor state $F$}
\label{fig:ex_Theta_state_expansion_2}
\end{figure}

Given $1 \leq j \leq n-1$, we let $1 \leq u \leq \min\{j,n-j\}$ and denote by
\begin{equation*}
\mathcal{C}_{n,u} = \{ (I_{1},I_{2},\ldots,I_{t}) \mid \emptyset = I_{1} \prec I_{2} \prec \cdots \prec I_{t} \in \mathcal{L}_{n}, \, |I_{t}| \leq u, \, t \geq 1\}.
\end{equation*}
For $c = (I_{1},\ldots,I_{t}) \in \mathcal{C}_{n,u}$ let $I_{c} = I_{t}$ and define
\begin{equation}
\label{eqn:pi_nj}
\pi_{n,j,c}(A) =  \frac{1}{Z_{n-2|I_{c}|,j-|I_{c}|,\emptyset}(A)} \, \prod_{i=1}^{t-1} \Big( -\frac{Z_{n-2|I_{i}|,j-|I_{i}|,I_{i+1} \ominus I_{i} }(A)}{Z_{n-2|I_{i}|,j-|I_{i}|,\emptyset}(A)} \Big),
\end{equation}
where $Z_{n,j,I}(A) = Z_{R'_{n,j,|I|},I}(A)$ is as in Lemma~\ref{lem:roof_R_nk_2}.

\begin{corollary}
\label{cor:formula_T_nju}
Let $T_{n,j,u}$ be the top state shown in Figure~\ref{fig:cor_formula_T_nju} with $u \geq 1$. Then 
\begin{equation}
\label{eqn:cor_formula_T_nju}
\Theta_{A}(T_{n,j,u},\emptyset;\cdot) = A^{(n-2j)d} \sum_{c \in \mathcal{C}_{n,u}} \pi_{n,j,c}(A) \, \Theta_{A}(R_{n-2|I_{c}|,j-|I_{c}|} *_{v} M_{n-2u,d},I_{c};\cdot),
\end{equation}
where $d = \min\{j,n-j\}-u$, $R_{n,k}$ is shown in Figure~\ref{fig:lem_roof_R_nk_1}, $M_{n,k}$ is shown in Figure~\ref{fig:pf_lem_roof_R_nk_2}(a), and $\pi_{n,j,c}(A)$ is given by \eqref{eqn:pi_nj}.
\end{corollary}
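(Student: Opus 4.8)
The plan is to prove \eqref{eqn:cor_formula_T_nju} by induction on $u$, viewing its right-hand side as the fully unfolded form of the solve-for-the-$\emptyset$-term expansion already used in the proof of Theorem~\ref{thm:main}. Two invariances make the induction close: with $k'=\min\{j,n-j\}$, both $d=k'-u$ and $n-2j$ are preserved under $(n,j,u)\mapsto(n-2s,j-s,u-s)$; moreover $R'_{n,j,l}=R'_{n-2l,j-l,0}$, so a term carrying an index $I$ with $|I|=l$ is literally a lower instance of the same problem. Throughout I would carry the fixed middle state $M_{n-2u,d}$ along every expansion using Proposition~\ref{prop:P_prime_concat_reflect_formula}(ii); this is legitimate because $n_b(R_{n-2|I_c|,j-|I_c|})=|n-2j|=n-2k'=n_t(M_{n-2u,d})$ for every chain $c$.

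For the single expansion step I would apply Lemma~\ref{lem:roof_R_nk_1} to $R_{n,j}$, isolate the $I=\emptyset$ summand exactly as in Theorem~\ref{thm:main}, and compose with $M_{n-2u,d}$ via Proposition~\ref{prop:P_prime_concat_reflect_formula}(ii). Since $Z_{n,j,\emptyset}(A)\neq0$ by Lemma~\ref{lem:roof_R_nk_2}, this expresses $\Theta_A(T_{n,j,u},\emptyset;\cdot)$ as $A^{(n-2j)d}Z_{n,j,\emptyset}(A)^{-1}$ times the difference of the terminating term $\Theta_A(R_{n,j}*_vM_{n-2u,d},\emptyset;\cdot)$ and a sum $\sum_I Z_{n,j,I}(A)\,\Theta_A(R'_{n,j,|I|}*_vM_{n-2u,d},I;\cdot)$ over nonempty $I\in\mathcal{L}_n$. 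The non-realizability argument of Lemma~\ref{lem:roof_R_nk_2} (through Theorem~\ref{thm:vh_line_condi}) then shows that each summand with $|I|>u$ vanishes identically, so only $|I|\le u$ survive.

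Next I would reduce each surviving summand. Writing $R'_{n,j,|I|}=R'_{n-2|I|,j-|I|,0}$ and using $\emptyset\oplus I=I$ (Proposition~\ref{prop:prop_oplus}(i)), Proposition~\ref{prop:P_prime_concat_reflect_formula}(i) identifies $\Theta_A(R'_{n-2|I|,j-|I|,0}*_vM_{n-2u,d},I;\cdot)$ with the image under $(\,\cdot\,)\oplus I$ of $\Theta_A(T_{n-2|I|,j-|I|,u-|I|},\emptyset;\cdot)$. Because $(n-2|I|)-2(u-|I|)=n-2u$ and $\min\{j-|I|,(n-2|I|)-(j-|I|)\}-(u-|I|)=d$, the inductive hypothesis applies with the same $M_{n-2u,d}$ and the same prefactor $A^{(n-2j)d}$. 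Applying $(\,\cdot\,)\oplus I$ to a chain $c'=(\emptyset=J_1\prec\cdots\prec J_t)\in\mathcal{C}_{n-2|I|,u-|I|}$ yields $(\emptyset\prec I\prec J_2\oplus I\prec\cdots\prec J_t\oplus I)$, which lies in $\mathcal{C}_{n,u}$ by Propositions~\ref{prop:prop_oplus}(iv) and \ref{prop:prop_ominus}(iv); as $I$ ranges over nonempty sets with $|I|\le u$ this is a bijection onto the chains of length $\ge2$, the length-one chain $(\emptyset)$ coming from the terminating term.

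The crux is matching coefficients. By Proposition~\ref{prop:prop_ominus}(iv), $(J_i\oplus I)\ominus(J_{i-1}\oplus I)=J_i\ominus J_{i-1}$ for $i\ge2$ while the new first step contributes $I\ominus\emptyset=I$; hence every $Z$-factor of $\pi_{n-2|I|,j-|I|,c'}$ recurs unchanged and $\pi_{n,j,c}=-\bigl(Z_{n,j,I}(A)/Z_{n,j,\emptyset}(A)\bigr)\pi_{n-2|I|,j-|I|,c'}$, with the trailing factor $Z_{n-2|I_c|,j-|I_c|,\emptyset}(A)^{-1}$ carried through. This is exactly the coefficient generated by the single step, so summing over $I$ and adjoining the $(\emptyset)$ term reassembles \eqref{eqn:cor_formula_T_nju}; the base case $u=0$ is the single step with empty correction sum. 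I expect the main difficulty to be precisely this last step: checking that the chain map is a well-defined bijection (that $J_i\oplus I\in\mathcal{L}_n$ and $\prec$ is preserved) and that the $\ominus$-differences telescope so that $\pi_{n,j,c}$ is reproduced verbatim, together with the geometric verification in the single step that composing with $M_{n-2u,d}$ annihilates exactly the terms with $|I|>u$.
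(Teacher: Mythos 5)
Your proposal is correct and takes essentially the same route as the paper: solve Lemma~\ref{lem:roof_R_nk_1} for the $\emptyset$-term as in the proof of Theorem~\ref{thm:main}, carry $M_{n-2u,d}$ along with Proposition~\ref{prop:P_prime_concat_reflect_formula}(ii), convert between $T_{n,j,u}$ and $R'_{n,j,0} *_{v} M_{n-2u,d}$ via the $d$-fold first-row expansion \eqref{eqn:1st_row_exp} (this is where your prefactor $A^{(n-2j)d}$ comes from and is the one step you should state explicitly), and annihilate the $|I|>u$ terms through the horizontal-line condition (Remark~\ref{rem:h_line_condi}, i.e.\ $n_{b}>n_{t}$ for those composite roof states). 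The only difference is organizational: the paper unfolds the whole recursion into a sum over $\mathcal{C}_{n,u+d}$ and discards the $|I_{c}|>u$ terms once at the end, whereas you prune at every step and reassemble via an explicit chain bijection with telescoping $\ominus$-differences --- a presentational rather than substantive deviation.
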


\begin{figure}[ht]
\centering
\includegraphics[scale=1]{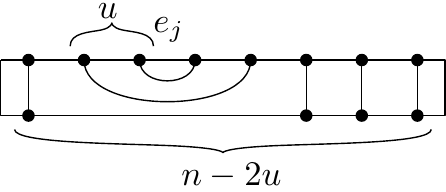}
\caption{Top state $T_{n,j,u}$}
\label{fig:cor_formula_T_nju}
\end{figure}

\begin{proof}
Let $\tilde{R} = R'_{n,j,0} *_{v} M_{n-2u,d}$, where $R'_{n,j,0}$ is as in Figure~\ref{fig:lem_roof_R_nk_2} and denote by $Z_{n,j,I}(A)$ the Laurent polynomial $Z_{R'_{n,j,|I|},I}(A)$ given in Lemma~\ref{lem:roof_R_nk_2}. 
Using Lemma~\ref{lem:roof_R_nk_1} and Proposition~\ref{prop:P_prime_concat_reflect_formula}(ii),
\begin{equation}
\Theta_{A}(\tilde{R},\emptyset;\cdot) = \frac{1}{Z_{n,j,\emptyset}(A)} \Big( \Theta_{A}(R_{n,j} *_{v} M_{n-2u,d},\emptyset;\cdot) - \sum_{(R',I') \in \mathcal{P}'} Z_{n,j,I'}(A) \, \Theta_{A}(R' *_{v} M_{n-2u,d},I';\cdot) \Big)
\label{eqn:pf_cor_formula_T_nju_1}
\end{equation}
where $\mathcal{P}' = \{(R'_{n,j,|I'|},I') \mid I' \in \mathcal{L}_{n}, \, 0 < |I'| \leq \min\{j,n-j\}\}$.
Since $R'_{n,j,|I'|} = R'_{n-2|I'|,j-|I'|,0}$, by Lemma~\ref{lem:roof_R_nk_1} and Proposition~\ref{prop:P_prime_concat_reflect_formula}(i) and (ii), each $\Theta_{A}(R' *_{v} M_{n-2u,d},I';\cdot)$ on the right-hand side of \eqref{eqn:pf_cor_formula_T_nju_1} can be written as
\begin{eqnarray}
\label{eqn:pf_cor_formula_T_nju_2}
\Theta_{A}(R' *_{v} M_{n-2u,d},I';\cdot) &=& \frac{1}{Z_{n-2|I'|,j-|I'|,\emptyset}(A)} \Big( \Theta_{A}(R_{n-2|I'|,j-|I'|} *_{v} M_{n-2u,d},\emptyset \oplus I';\cdot) \\
&-& \sum_{(R'',I'') \in \mathcal{P}''_{R',I'}} Z_{n-2|I'|,j-|I'|,I''}(A) \, \Theta_{A}(R'' *_{v} M_{n-2u,d},I'' \oplus I';\cdot) \Big) \notag,
\end{eqnarray}
where $\mathcal{P}''_{R',I'}= \{(R'_{n-2|I'|,j-|I'|,|I''|},I'') \mid I'' \in \mathcal{L}_{n-2|I'|}, \, 0 < |I''| \leq \min\{j,n-j\}-|I'|\}$.
Substituting \eqref{eqn:pf_cor_formula_T_nju_2} into \eqref{eqn:pf_cor_formula_T_nju_1} yields
\begin{eqnarray*}
\Theta_{A}(\tilde{R},\emptyset;\cdot) &=& \frac{1}{Z_{n,j,\emptyset}(A)} \, \Theta_{A}(R_{n,j} *_{v} M_{n-2u,d},\emptyset;\cdot) \\
&-& \sum_{(R',I') \in \mathcal{P}'} \frac{Z_{n,j,I'}(A)}{Z_{n,j,\emptyset}(A)} \, \frac{1}{Z_{n-2|I'|,j-|I'|,\emptyset}(A)} \, \Theta_{A}(R_{n-2|I'|,j-|I'|} *_{v} M_{n-2u,d},I';\cdot) \\
&+& \sum_{(R',I') \in \mathcal{P}'} \, \sum_{(R'',I'') \in \mathcal{P}''_{R',I'}} \frac{Z_{n,j,I'}(A)}{Z_{n,j,\emptyset}(A)} \, \frac{Z_{n-2|I'|,j-|I'|,I''}(A)}{Z_{n-2|I'|,j-|I'|,\emptyset}(A)} \, \Theta_{A}(R'' *_{v} M_{n-2u,d},I'' \oplus I';\cdot).
\end{eqnarray*}
After recursively applying Lemma~\ref{lem:roof_R_nk_1} and Proposition~\ref{prop:P_prime_concat_reflect_formula}(i) and (ii), we see that
\begin{equation}
\label{eqn:pf_cor_formula_T_nju_3}
\Theta_{A}(\tilde{R},\emptyset;\cdot) = \sum_{c \in \mathcal{C}_{n,u+d}} \pi_{n,j,c}(A) \, \Theta_{A}(R_{n-2|I_{c}|,j-|I_{c}|} *_{v} M_{n-2u,d},I_{c};\cdot).
\end{equation}

Applying $d$ times the first-row expansion \eqref{eqn:1st_row_exp} to $(\tilde{R},\emptyset)$ gives
\begin{equation}
\Theta_{A}(\tilde{R},\emptyset;\cdot) = A^{(-n+2j)d} \, \Theta_{A}(T_{n,j,u},\emptyset;\cdot).
\label{eqn:pf_cor_formula_T_nju_4}
\end{equation}
Since all terms $\Theta_{A}(R_{n-2|I_{c}|,j-|I_{c}|} *_{v} M_{n-2u,d},I_{c};\cdot)$ with $|I_{c}| > u$ in the sum are zero by Remark~\ref{rem:h_line_condi}, we see that \eqref{eqn:cor_formula_T_nju} follows from \eqref{eqn:pf_cor_formula_T_nju_3} and \eqref{eqn:pf_cor_formula_T_nju_4}.
\end{proof}

Note that \eqref{eqn:cor_formula_T_nju} after collecting like-terms can be used to obtain $\Theta_{A}$-state expansions for the family of top states $T_{n,j,u}$. Consequently, by Proposition~\ref{prop:P_prime_concat_reflect_formula}(ii), we can obtain $\Theta_{A}$-state expansions for infinitely many roof states $R = T_{n,j,u} *_{v} M$, where $M$ is a middle state with $n_{t}(M) = n-2u$.

We say that a coefficient $C(A)$ of a Catalan state $C$ is \emph{unimodal} if there is an integer $k$ and a unimodal polynomial\footnote{A polynomial $p(x) = \sum_{i=0}^{n} a_{i} x^{i}$ is unimodal if its coefficients $a_{0},a_{1},\ldots,a_{n}$ form a unimodal sequence.} $p(x)$ such that $C(A) = A^{k} \, p(A^{4})$. Clearly, coefficients of Catalan states of $L(m,n)$ for $n = 1,2$ are unimodal and, using Proposition~5.4 of \cite{DP2019}, it can also easily be seen that coefficients of Catalan states of $L(m,3)$ are unimodal. Furthermore, preliminary results that we obtained using $\Theta_{A}$-state expansion for Catalan states of $L(m,4)$ indicate that quite likely coefficients for these states are unimodal as well. However, this is not the case for coefficients of Catalan states of $L(m,5)$ as the following example shows.

\begin{figure}[ht]
\centering
\includegraphics[scale=1]{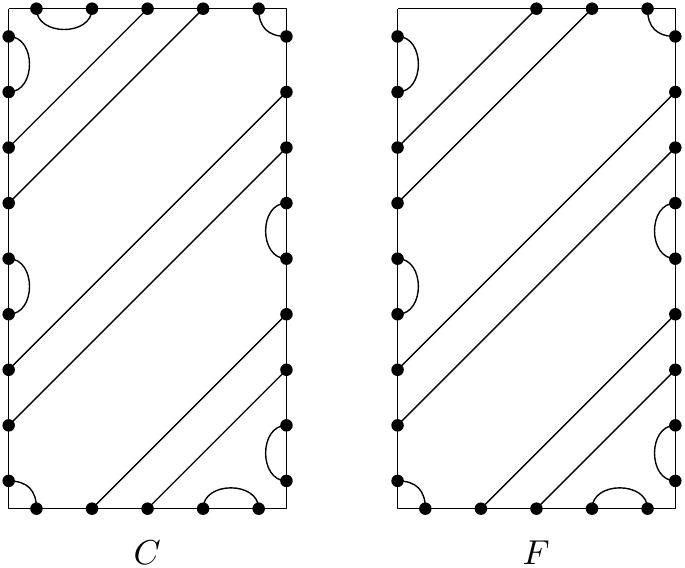}
\caption{Catalan state $C$ and floor state $F$}
\label{fig:ex_Theta_state_expansion_3}
\end{figure}

\begin{example}
\label{ex:Theta_state_expansion_3}
Let $C$ be the Catalan state in Figure~\ref{fig:ex_Theta_state_expansion_3}, then $C = R'_{5,1,0} *_{v} F$ for the top state $R'_{n,k,l}$ in Figure~\ref{fig:ex_Theta_state_expansion_2} and the floor state $F$ shown in Figure~\ref{fig:ex_Theta_state_expansion_3}. Since $F_{I} = K_{0}$ for all $I \in \mathcal{L}_{5} \setminus \{\emptyset,\{4\}\}$, by Lemma~\ref{lem:roof_R_nk_1}, one shows that
\begin{equation*}
C(A) = \Theta_{A}(R'_{5,1,0},\emptyset;F) = \frac{1}{Z_{R'_{5,1,0},\emptyset}(A)} \, \Theta_{A}(R_{5,1},\emptyset;F) - \frac{Z_{R'_{5,1,1},\{4\}}(A)}{Z_{R'_{5,1,0},\emptyset}(A)} \, \Theta_{A}(R'_{5,1,1},\{4\};F),
\end{equation*}
where $Z_{R'_{5,1,0},\emptyset}(A) = A^{-23}(1+A^{4}+A^{8}+A^{12}+A^{16})$ and $Z_{R'_{5,1,1},\{4\}}(A) = A^{-15}$ are given in Lemma~\ref{lem:roof_R_nk_2}. Using Theorem~\ref{thm:coef_no_bot_rtn} one verifies that 
\begin{equation*}
\Theta_{A}(R_{5,1},\emptyset;F) = A^{-24}(A^{-12} + A^{-8} + A^{-4} + 2 + A^{4})^{3}
\end{equation*} 
and 
\begin{equation*}
\Theta_{A}(R'_{5,1,1},\{4\};F) = A^{-9}.
\end{equation*}
Therefore,
\begin{equation*}
C(A) = A^{-37}(1 + 2A^{4} + 3A^{8} + 7A^{12} + 8A^{16} + 7A^{20} + 9A^{24} + 5A^{28} + A^{32}).
\end{equation*} 
\end{example}

\section*{Acknowledgement} Authors would like to thank Professor J\'{o}zef H. Przytycki for all valuable comments and suggestions.

\bibliography{main}

\end{document}